\documentclass[reqno,twoside,11pt]{amsart}
\usepackage{cite}
\usepackage{amsmath}
\usepackage{amsfonts}
\usepackage{amssymb}
\usepackage{epsfig}
\usepackage{verbatim}

\IfFileExists{epsf.def}{\input epsf.def}{\usepackage{epsf}}

\IfFileExists{mathptmx.sty}{\usepackage{mathptmx}}{\usepackage{mathpazo}}
\usepackage{mathrsfs}

\DeclareFontFamily{OT1}{eusb}{} \DeclareFontShape{OT1}{eusb}{m}{n} {<5> <6> <7> <8> <9> <10> <11> <12> <14.4> eusb10}{}
\DeclareMathAlphabet{\eusb}{OT1}{eusb}{m}{n}

\DeclareFontFamily{OT1}{eusm}{} \DeclareFontShape{OT1}{eusm}{m}{n} {<5> <6> <7> <8> <9> <10> <11> <12> <14.4> eusm10}{}
\DeclareMathAlphabet{\eusm}{OT1}{eusm}{m}{n}

\DeclareFontFamily{OT1}{eufm}{} \DeclareFontShape{OT1}{eufm}{m}{n} {<5> <6> <7> <8> <9> <10> <11> <12> <14.4> eufm10}{}
\DeclareMathAlphabet{\mathfrak}{OT1}{eufm}{m}{n}

\DeclareFontFamily{OT1}{fraktura}{}
\DeclareFontShape{OT1}{fraktura}{m}{n} {<5> <6> <7> <8> <9> <10> <11> <12> <13> <14.4> [1.1] eufm10}{}
\DeclareMathAlphabet{\fraktura}{OT1}{fraktura}{m}{n}

\DeclareFontFamily{OT1}{cmfi}{} \DeclareFontShape{OT1}{cmfi}{m}{n} {<5> <6> <7> <8> <9> <10> <11> <12> <13> <14.4> [0.9] cmfi10}{}
\DeclareMathAlphabet{\cmfi}{OT1}{cmfi}{b}{n}

\DeclareFontFamily{OT1}{cmss}{} \DeclareFontShape{OT1}{cmss}{m}{n} {<5> <6> <7> <8> <9> <10> <11> <12> <13> <14.4> cmss10}{}
\DeclareMathAlphabet{\cmss}{OT1}{cmss}{m}{n}

\setlength{\topmargin}{0.3truein}
\setlength{\oddsidemargin}{0.325truein}
\setlength{\evensidemargin}{0.325truein}
\setlength{\textheight}{8.2truein}
\setlength{\textwidth}{5.85truein}

\newtheoremstyle{thm}{1.5ex}{1.5ex}{\itshape\rmfamily}{} {\bfseries\rmfamily}{}{2ex}{}

\newtheoremstyle{def}{1.5ex}{1.5ex}{\rmfamily\sl}{} {\bfseries\rmfamily}{}{2ex}{}

\newtheoremstyle{rem}{1.3ex}{1.3ex}{\rmfamily}{} {\bfseries\rmfamily}{}{2ex}{}

\newtheoremstyle{ass}{1.5ex}{1.5ex}{\rmfamily\sl}{} {\bfseries\rmfamily}{}{2ex}{}

\newenvironment{proofsect}[1] {\vskip0.1cm\noindent{\rmfamily\itshape#1.}}{\qed\vspace{0.15cm}}

\theoremstyle{thm}
\newtheorem{theorem}{Theorem}[section]
\newtheorem{lemma}[theorem]{Lemma}
\newtheorem{proposition}[theorem]{Proposition}

\newtheorem*{Main Theorem}{Main Theorem.}

\newtheorem{conjecture}[theorem]{Conjecture}

\newtheorem{assumption}{Assumption}

\theoremstyle{def}

\theoremstyle{rem}
\newtheorem{remark}[theorem]{{Remark}}

\numberwithin{equation}{section}


\renewcommand{\section}{\secdef\sct\sect}
\newcommand{\sct}[2][default]{\refstepcounter{section}
\addcontentsline{toc}{section}
{{\tocsection {}{\thesection}{\!\!\!\!#1\dotfill}}{}}
\vspace{0.7cm}
\centerline{ 
\scshape\arabic{section}.\ #1} \nopagebreak \vspace{0.2cm}}
\newcommand{\sect}[1]{
\vspace{0.4cm} \centerline{\large\scshape\rmfamily #1}
\vspace{0.2cm}}

\renewcommand{\subsection}{\secdef\subsct\sbsect}
\newcommand{\subsct}[2][default]{\refstepcounter{subsection}
\addcontentsline{toc}{subsection}
{{\tocsection{\!\!}{\hspace{1.2em}\thesubsection}{\!\!\!\!#1\dotfill}}{}}
\nopagebreak\vspace{0.45\baselineskip} {\flushleft\bf
\arabic{section}.\arabic{subsection}~\bf #1.~}
\\*[3mm]\noindent
\nopagebreak}
\newcommand{\sbsect}[1]{\vspace{0.1cm}\noindent
\textbf{#1.~}\vspace{0.1cm}}

\renewcommand{\subsubsection}{%
\secdef \subsubsect\sbsbsect}
\newcommand{\subsubsect}[2][default]{%
\refstepcounter{subsubsection} 
\addcontentsline{toc}{subsubsection}{{\tocsection{\!\!}
{\hspace{3.05em}\thesubsubsection}{\!\!\!\!#1\dotfill}}{}}
\nopagebreak
\vspace{0.15\baselineskip} \nopagebreak {\flushleft\rmfamily
\itshape\arabic{section}.\arabic{subsection}.\arabic{subsubsection}
\ \rmfamily #1\/.}\ }
\newcommand{\sbsbsect}[1]{\vspace{0.1cm}\noindent
\rmfamily \itshape
\arabic{section}.\arabic{subsection}.\arabic{subsubsection} \
\sffamily #1\/.\ }


\renewcommand{\caption}[1]{%
\vglue0.5cm
\refstepcounter{figure}
\begin{minipage}{0.9\textwidth}\small {\sc Figure~\thefigure. }#1\end{minipage}}



\newcommand{\textd}{\text{\rm d}\mkern0.5mu}

\newcommand{\texte}{\text{\rm  e}\mkern0.7mu}

\newcommand{\Cov}{\text{\rm \Cov}}
\newcommand{\1}{{1\mkern-4.5mu\textrm{l}}}
\renewcommand{\1}{\text{\sf 1}}

\renewcommand{\AA}{\mathcal A}

\newcommand{\CC}{\mathcal C}

\newcommand{\EE}{\mathcal E}
\newcommand{\FF}{\mathcal F}
\newcommand{\GG}{\mathcal G}

\newcommand{\B}{\mathbb B}

\newcommand{\E}{\mathbb E}

\newcommand{\N}{\mathbb N}

\newcommand{\BbbP}{\mathbb P}
\newcommand{\Q}{\mathbb Q}
\newcommand{\R}{\mathbb R}

\newcommand{\Z}{\mathbb Z}

\newcommand{\scrC}{\mathscr{C}}

\newcommand{\scrF}{\mathscr{F}}

\newcommand{\cc}{{\text{\rm c}}}

\def\myffrac#1#2 in #3{\raise 2.6pt\hbox{$#3 #1$}\mkern-1.5mu\raise 0.8pt\hbox{$#3/$}\mkern-1.1mu\lower 1.5pt\hbox{$#3 #2$}}
\newcommand{\ffrac}[2]{\mathchoice%
	{\myffrac{#1}{#2} in \scriptstyle}
	{\myffrac{#1}{#2} in \scriptstyle}
	{\myffrac{#1}{#2} in \scriptscriptstyle}
	{\myffrac{#1}{#2} in \scriptscriptstyle}
}





\newcommand{\ul}{\underline}

\newcommand{\hatX}{\hat{X}}

\begin{document}

\title[Trapping in the random conductance model]
{\large Trapping in the random conductance model}
\author[M.~Biskup, O.~Louidor, A.~Rozinov and A.~Vandenberg-Rodes]
{M.~Biskup$^{1,2}$, O.~Louidor$^{1}$, A.~Rozinov$^{1,3}$, A.~Vandenberg-Rodes~$^{1,4}$}
\thanks{\hglue-4.5mm\fontsize{9.6}{9.6}\selectfont\copyright\,\textrm{2012} \textrm{M.~Biskup, O.~Louidor, A.~Rozinov and A.~Vandenberg-Rodes.
Reproduction, by any means, of the entire
article for non-commercial purposes is permitted without charge.\vspace{2mm}}}
\maketitle

\vspace{-5mm}
\centerline{\textit{$^1$Department of Mathematics, UCLA, Los Angeles, California, USA}}
\centerline{\textit{$^2$School of Economics, University of South Bohemia, \v Cesk\'e Bud\v ejovice, Czech Republic}}
\centerline{\textit{$^3$Courant Institute, NYU, New York City, New York, USA}}
\centerline{\textit{$^4$Department of Mathematics, UCI, Irvine, California, USA}}

\vspace{2mm}
\begin{quote}
\footnotesize \textbf{Abstract:}
We consider random walks on~$\Z^d$ among nearest-neighbor random conductances which are 
i.i.d., positive, bounded uniformly from above but whose support extends all the way to zero. Our focus is on the detailed properties of the paths of the random walk conditioned 
to return back to the starting point at time $2n$. We show that in the situations when the 
heat kernel exhibits subdiffusive decay --- which is known to occur in dimensions $d\ge4$ 
--- the walk gets trapped for a time of order~$n$ in a small spatial region. 
This shows that the strategy used earlier to infer subdiffusive lower bounds on the heat kernel in specific examples is in fact dominant. In addition, we settle a conjecture concerning the worst possible subdiffusive decay in four dimensions. 
\end{quote}
\vspace{-4mm}

\section{Introduction}
\vspace{-6mm}
\subsection{Motivation}\noindent
Random walks among random conductances (a.k.a.\ the Random Conductance Model) are among the best studied examples of random walks in random environments. Indeed, it was in this context where the first general cases of an (\emph{annealed}) invariance principle were established (Kipnis and Varadhan~\cite{Kipnis-Varadhan}) and the decay of transition probabilities characterized (Delmotte~\cite{Delmotte}). Recently, the model enjoyed another wave of concerted work whose motivation stemmed from several sources. First, the invariance principle --- for elliptic cases~\cite{Kipnis-Varadhan} or not (De Masi, Ferrari, Goldstein and Wick~\cite{DMFGW1,DMFGW2}) --- required averaging the path law over the environment; the desire was to remove this averaging and prove a \emph{quenched} version of the result. Second, the analytic methods employed in \cite{Delmotte} hinged upon the assumption of uniform ellipticity and it was unclear how to proceed in the absence thereof.

As it turns out, both problems were resolved roughly at the same time and using similar methods (Sidoravicius and Sznitman~\cite{Sidoravicius-Sznitman}, Berger and Biskup~\cite{Berger-Biskup}, Mathieu and Piatnitski~\cite{Mathieu-Piatnitski}). A key input was to invoke (and obtain) a diffusive bound on the probability that the random walk is back to the starting point after a long time. This could be done in various specific cases of interest (e.g., for elliptic conductances by Delmotte~\cite{Delmotte}) including the random walk on the supercritical percolation cluster (Mathieu and Remy~\cite{Mathieu-Remy}, Barlow~\cite{Barlow}). However, instances of the Random Conductance Model have also been found --- in dimensions $d\ge5$ by Berger, Biskup, Hoffman and Kozma~\cite{BBHK}, Boukhadra~\cite{B1,B2} and in $d=4$ by Biskup and Boukhadra~\cite{Biskup-Boukhadra} --- where this probability decays \emph{subdiffusively}. Remarkably, this happens while a non-degenerate invariance principle holds for the paths (Mathieu~\cite{Mathieu-CLT}, Biskup and Prescott~\cite{Biskup-Prescott}). The two popular characterizations of ``diffusive behavior'' of the random walk employed in physics --- one based on the mean-square displacement of the $n$-th step of the walk and the other relying on the decay exponent of the return probabilities --- thus yield conflicting conclusions.

In the present note we will further elucidate the above phenomenon by analyzing the typical behavior of the random walk path conditioned to return to the starting point at a given time. Our main finding is that, whenever the return probabilities decay subdiffusively, the trapping strategies employed in \cite{BBHK,Biskup-Boukhadra,B1,B2} --- explicitly, that the walk gets stuck in a very small region for a positive fraction of its time --- are actually dominant. This may seem akin to the behavior seen in the Random Conductance Model with non-integrable upper tails; the limiting behavior there is described by the \emph{fractional kinetic equation} which corresponds to a Brownian path parametrized by the inverse of a stochastic (subordinator) process (Barlow and \v Cern\'y~\cite{Barlow-Cerny}, \v Cern\'y~\cite{Cerny}). However, in our case the trapping occurs at a bounded number of ``space-time'' locations, while for the fractional kinetic model it occurs at multiple scales and along the entire path.

\subsection{Model and known facts}
In order to keep further discussion more focused, let us introduce some notation. The random walks we will consider  invariably take place on the hypercubic lattice~$\Z^d$. Only nearest-neighbor transitions will be permitted with their probabilities given as follows: Let $(\omega_{xy})$ be the collection of positive numbers, called \emph{conductances}, that are indexed by unordered pairs of nearest neighbor vertices; i.e., $\omega_{xy}=\omega_{yx}$. The ``walk'' is actually a Markov chain $(X_n)$ with transition kernel
\begin{equation}
\cmss P_\omega(x,y):=\frac{\omega_{xy}}{\pi_\omega(x)}\quad\text{where}\quad\pi_\omega(x):=\sum_{z\colon|z-x|=1}\omega_{xz}.
\end{equation}
We will henceforth assume that the $\omega$'s are i.i.d.\ random variables with common law denoted by~$\BbbP$ and expectation by~$\E$. We will assume that the conductances are bounded from above, say, $\BbbP(\omega_{xy}\le1)=1$, but not away from zero. Notwithstanding, we impose $\BbbP(\omega_{xy}=0):=0$ throughout the paper to keep (many) calculations at a comfortable level.

Let us use $P_\omega^x$ to denote the law of the path ($X_n)$ subject to the initial condition $P_\omega^x(X_0=x) =1$. Clearly, $P_\omega^x(X_n=y)$ can also be written as the $n$-fold product $\cmss P^n_\omega(x,y)$ of the transition kernel evaluated between~$x$ and~$y$. Central to our attention is the precise decay of the \emph{diagonal} term, $\cmss P_\omega^{2n}(0,0)$, as $n\to\infty$. A quenched invariance principle is valid in our setting (Mathieu~\cite{Mathieu-CLT}, Biskup and Prescott~\cite{Biskup-Prescott}) and one thus immediately has the lower bound
\begin{equation}
\label{E:diffuse}
\cmss P_\omega^{2n}(0,0)\ge\frac{C(\omega)}{n^{d/2}},\qquad n\ge1,
\end{equation}
for some $\BbbP$-a.s.\ positive~$C(\omega)$; see e.g.\ Lemma~5.1 in Biskup~\cite{Biskup-review}. This rules out a superdiffusive scaling and, naturally, leads one to bet on a diffusive behavior. However, attempts to prove a corresponding upper bound failed due to the fact that all methods known for this purpose require some level of uniform ellipticity (which is simply not there for the cases under consideration). As it turned out, these efforts had no chance of succeeding --- such upper bounds actually do not hold despite the non-degenerate diffusive scaling of the entire path.

It was Fontes and Mathieu~\cite{Fontes-Mathieu} who first raised doubts about the general validity of diffusive heat-kernel upper bounds by constructing a law on conductances (not exactly i.i.d., but close enough), in all $d\ge1$, for which the \emph{expectation} $\E\cmss P_\omega^{2n}(0,0)$ decays arbitrarily slowly. However, as we are dealing with tail probabilities, it was not clear how much of this is the effect of averaging.

Motivated by this, Berger, Biskup, Hoffman and Kozma~\cite{BBHK} turned to the study of the quenched decay. It is not hard to check that, in $d=1$, the decay can be arbitrarily slow, so the best general estimate one can hope for is, in this case, $\cmss P_\omega^{2n}(0,0)=o(1)$. Above one dimension, the following general upper bounds were obtained:
\begin{equation}
\label{E:me}
\cmss P_\omega^n(0,0)\le C(\omega)\,
\begin{cases}
n^{-d/2},\qquad&d=2,3,
\\
n^{-2}\log n,\qquad&d=4,
\\
n^{-2},\qquad&d\ge5,
\end{cases}
\end{equation}
where $C(\omega)<\infty$ $\BbbP$-a.s. This matches the lower bound \eqref{E:diffuse} in dimensions $d=2,3$ (and thus shows that averaging \emph{was} the culprit in this case) but leaves a gap in all $d\ge4$. 

This point appeared to be a good time to start searching for possible counterexamples. And, indeed, soon enough an (i.i.d., nearest-neighbor) environment~$\BbbP$ was constructed, for any sequence $\lambda_n\to\infty$ and any $d\ge2$, such that
\begin{equation}
\label{E:lower}
\cmss P_\omega^{2n}(0,0)\ge\frac{C(\omega)}{n^2\lambda_n},
\end{equation}
along a deterministic subsequence $n_k\to\infty$; cf~\cite[Theorem~9]{BBHK}. Note that this decays faster than $n^{-d/2}$ in $d=2,3$ but slower in $d\ge5$. Hence \eqref{E:me} seemed
to be right on target (at least for $d \neq 4$ but, as we will show here, also for $d = 4$).

The gist of the construction of such an environment is simple to describe: For a positive integer~$n$, call an edge $\langle x,y\rangle$ a \emph{trap of scale~$n$} if $\omega_{xy}=1$ and $\omega_{xz}=\ffrac1n$ for all neighbors~$z\ne y$ of~$x$, and similarly $\omega_{yz}=\ffrac1n$ for all neighbors $z\ne x$ of~$y$. Call a trap $\lambda_n$-accessible if there is a path of length $o(\log\lambda_n)$ of edges with conductance one that connects a neighbor of~$x$ or~$y$ to a fixed ($\omega$-dependent) neighborhood of the origin. Now construct~$\BbbP$ so that a typical configuration will contain a $\lambda_n$-accessible trap of scale~$n$ for all sufficiently large~$n$ in a (sparse) deterministic sequence tending to infinity.  

The bound \eqref{E:lower} is then the result of the following strategy: The walk finds the path of conductance one in a finite number of steps and then travels along it towards the trap. Then it jumps across a~$\ffrac1n$-edge into the trap --- paying order~$\ffrac1n$ in probability --- after which it is happy to keep bouncing back and forth on $\langle x,y\rangle$ for any given time of order~$n$ (note that the escape probability is order $\ffrac 1n$). Then we make it emerge from the trap by crossing the same $\ffrac1n$-edge (paying again order~$\ffrac1n$) just in time for it to make it back, backtracking its own steps, to the origin in total time~$2n$. Using the Markov property, this gives
\begin{equation}
\cmss P_\omega^{2n}(0,0)\ge C(\omega)\,\texte^{-o(\log\lambda_n)}\,\frac1n\,\texte^{-O(1)}\,\frac1n\,\texte^{-o(\log\lambda_n)},
\end{equation}
which easily yields \eqref{E:lower}. A more detailed analysis (Boukhadra~\cite{B1,B2}) reveals that the transition between diffusive and subdiffusive regimes occurs in the class of power-law tails. 

As can be expected, in $d=4$ the corresponding construction becomes considerably more difficult, but even here one can find~$\BbbP$, for any~$\lambda_n\to\infty$, such that
\begin{equation}
\label{E:lower2}
\cmss P_\omega^{2n}(0,0)\ge C(\omega)\frac{\log n}{n^2\lambda_n},
\end{equation}
along a deterministic sequence of~$n$'s tending to infinity. The appearance of $\log n$ is due to the fact that here  trapping occurs (roughly) uniformly likely along a sequence of exponentially growing spatial scales; cf Biskup and Boukhadra~\cite{Biskup-Boukhadra} for further details.

We remark that subdiffusive decay of return probabilities has recently been demonstrated also in dynamical random environments by Buckley~\cite{Buckley}. Interestingly, the variable nature of the environment permits  one to achieve even decays close to $n^{-1}$. This is (roughly) because one can arrange the environment so that the walk has to pay to enter the trap, but exits it freely. It is not clear whether the $n^{-1}$-decay is the worst one can get under reasonable mixing assumptions on the environment dynamics.

As a final note we wish to reiterate that all of the subdiffusive decays occur while the quenched invariance principle holds for the path law. Thanks to very interesting observations made by Delmotte and Rau~\cite{Delmotte-Rau}, also  other characteristics of the law (e.g., the expected time to exit a large ball) behave as for uniformly elliptic environments.

\subsection{Problems to be addressed here}
Although the lower bounds \eqref{E:lower} and \eqref{E:lower2} and the upper bounds \eqref{E:me} are quite similar, they do not match each other completely because of the requirement~$\lambda_n\to\infty$. In $d\ge5$ this gap is closed by another result from \cite{BBHK} which states
\begin{equation}
\label{E:1.7}
\text{in }d\ge5:\qquad
n^2\,\cmss P_\omega^{2n}(0,0)\,\underset{n\to\infty}\longrightarrow\,0,\qquad \BbbP\text{-a.s.}
\end{equation}
Unfortunately, the argument in \cite{BBHK} does not extend to $d=4$ and so, even in the presence of examples satisfying the lower bound \eqref{E:lower2}, the story is not entirely finished in $d=4$. This is one of the problems to be resolved in this note (Theorem~\ref{thm:d=4}).

Another question of interest concerns the behavior of the paths that carry $P_\omega^0(X_{2n}=0)$. For ``regularly behaved'' random walks we expect, somewhat tautologically, that the path scales, as~$n\to\infty$, to a Brownian bridge. However, once $\cmss P^{2n}(0,0)$ decays subdiffusively, this can no longer be true. The fact that the lower bounds and the upper bounds can be matched suggests a possibility of a path getting stuck for a time of order~$n$ at a particular (spatially small) location --- a trap. However, it was not known whether multiple (more than a few) visits to traps or some other strategies cannot do even better. The main contribution in this paper (Theorems~\ref{thm:trapping} and \ref{thm:enhanced})
is an answer to this question, namely a rather precise description of the typical trapping strategy 
employed by the random walk in order to achieve subdiffusivity.

\section{Assumptions and Results}
\noindent
Although we treat here only the case of i.i.d.\ conductances, there are in fact only a few specific facts about the environment that we use in the proofs. These may be satisfied by other conductance laws and for this reason we explicitly state in this section all the assumptions which we rely upon later. Additional examples of environment distributions for which our results hold are discussed after the statement of the main results.

\subsection{Setup and assumptions}
Let $\B(\Z^d)$ denote the set of unordered (nearest-neighbor) edges in~$\Z^d$ and let $\Omega:=(0,1]^{\B(\Z^d)}$ be the set of allowed conductance configurations. We endow~$\Omega$ with the usual product $\sigma$-algebra. We will write $\langle x,y\rangle$ for the edge with endpoints $x,y\in\Z^d$; since the edges are not oriented, we have $\langle x,y\rangle=\langle y,x\rangle$. In particular, if $\omega\in\Omega$, we can interchangeably write
\begin{equation}
\omega_{xy}=\omega_{yx}=\omega_b,\quad\text{where}\quad b:=\langle x,y\rangle\in\B(\Z^d).
\end{equation}
On $\Omega$, there is a natural notion of a ``shift-by~$x$,'' denoted by $\tau_x$, which is defined by
\begin{equation}
(\tau_x\omega)_{y,z}:=\omega_{y+x,z+x},\qquad x,y,z\in\Z^d.
\end{equation}
Notice that the origin in environment $\tau_x\omega$ corresponds to vertex~$x$ in environment~$\omega$.
Our first assumption pertains directly to the law of $\omega$:

\begin{assumption}
\label{A1}
The environment law $\BbbP$ is a probability measure on (the product $\sigma$-algebra of)~$\Omega$ which is (jointly) ergodic with respect to the shifts $(\tau_x)_{x\in\Z^d}$.
\end{assumption}

The relevance of the ergodicity assumption is seen from the fact that, for any sequence $a_n\to\infty$ and $\star\in\{0,\infty\}$, the events
\begin{equation}
\AA^+_\star:=\bigl\{\omega\colon\limsup_{n\to\infty}\,a_n\cmss P_\omega^{2n}(0,0)=\star\bigr\}
\end{equation}
and
\begin{equation}
\AA^-_\star:=\bigl\{\omega\colon\liminf_{n\to\infty}\,a_n\cmss P_\omega^{2n}(0,0)=\star\bigr\}
\end{equation}
are shift-invariant and thus zero-one for any ergodic~$\BbbP$. To see why shift-invariance holds, we recall that the diagonal heat-kernel is non-increasing; cf~\cite[Lemma~3.9]{BBHK}:
\begin{equation}
\label{E:2.5a}
\cmss P_\omega^{2n}(x,x)\ge\cmss P_\omega^{2n+2}(x,x),\qquad n\ge0.
\end{equation}
This implies $\cmss P_\omega^{2n}(x,x)\ge\cmss P_\omega^{2n+2}(x,x)\ge C(\omega)\cmss P_\omega^{2n}(y,y)$, whenever $x$ and~$y$ are neighbors on~$\Z^d$, with $C(\omega):=\cmss P_\omega(x,y)\cmss P_\omega(y,x)
> 0$. As $\cmss P_\omega^{2n}(x,x)=\cmss P_{\tau_x\omega}^{2n}(0,0)$, we are done.

\smallskip
Our results and their proofs require making a (somewhat arbitrary) distinction between strong and weak edges. This will be done by introducing a positive cut-off $\alpha$ and calling edges $b$ with $\omega_b\ge\alpha$ strong and the others weak. If (as in the i.i.d.\ case), for $\alpha > 0$ small enough, the strong edges form an infinite connected component --- the \emph{strong component} $\scrC_{\infty,\alpha}$ --- whose complement has only finite connected components, we may choose to observe the random walk only when it is on~$\scrC_{\infty,\alpha}$. This defines a coarse-grained random walk $\hatX$. 
This walk is again a Markov chain, but now with the states restricted to $\scrC_{\infty,\alpha}$. The benefits of considering the coarse-graining are twofold: First, we will be able to employ arguments which require the conductances to be bounded uniformly away from zero. Second, the coarse-grained setting provides a natural, and completely geometric, approach to the notion of trapping. 

\smallskip
Let us thus pick an~$\alpha>0$ and let $\B_\alpha(\omega):=\{b\in\B(\Z^d)\colon \omega_b\ge\alpha\}$. Let $\scrC_{\infty,\alpha}=
\scrC_{\infty,\alpha}(\omega)$ denote the set of vertices in~$\Z^d$ that lie on an infinite self-avoiding path of edges from~$\B_\alpha(\omega)$. We will often regard $\scrC_{\infty, \alpha}$ as a sub-graph of $\Z^d$ with the edge set $\{\langle x,y\rangle\in \B_\alpha(\omega)\colon x,y\in\scrC_{\infty,\alpha}\}$. Our first structural assumption is then:

\begin{assumption}
\label{A2}
For~$\BbbP$-a.e.~$\omega$,
\begin{equation}
\label{eqn:A2}
\bigcup_{\alpha > 0} \scrC_{\infty, \alpha}(\omega)=\Z^d.
\end{equation}
\end{assumption}

Notice that Assumption~\ref{A2} implies
\begin{equation}
\label{eqn:alpha1}
\alpha_1:=\inf\bigl\{\alpha>0\colon\BbbP(\scrC_{\infty,\alpha}\ni0)=0\bigr\}>0.
\end{equation} 
For $\alpha\in(0,\alpha_1)$, we may thus define the conditional measure
\begin{equation}
\BbbP_\alpha(-):=\BbbP(-|0\in\scrC_{\infty,\alpha}).
\end{equation}
We remark that, for general environments, the graph $\scrC_{\infty,\alpha}$ may not be connected and, in fact, uniqueness of the infinite cluster of strong edges is not required. Notwithstanding, uniqueness 
certainly holds in the i.i.d.\ case (e.g., by Burton and Keane~\cite{Burton-Keane}).

Our next concern are the connected components of the set~$\Z^d\setminus\scrC_{\infty,\alpha}$ --- with connectedness induced by the graph structure of the full~$\Z^d$. For~$y\in\Z^d\setminus\CC_{\infty,\alpha}$, let $\FF_y=\scrF_y(\omega)$ denote the connected component of $\Z^d\setminus \scrC_{\infty, \alpha}(\omega)$ containing~$y$; for $y\in\CC_{\infty,\alpha}$ we set $\FF_y:=\{y\}$. Define $\GG_x=\GG_x(\omega)$ by
\begin{equation}
\label{eqn:Pa}
\GG_x:=\bigcup_{y\colon\langle x,y\rangle\in\B(\Z^d)}\FF_y. 
\end{equation}
We will use $|\GG_x|$ to denote the cardinality of~$\GG_x$.

\begin{assumption}
\label{A3}
There is~$\alpha \in(0,\alpha_1)$ such that
\begin{equation}
\label{E:2.10}
|\GG_0|\in L^p(\BbbP_{\alpha}),\qquad \forall\ 1\le p<\infty.
\end{equation}
\end{assumption}

Note that \eqref{E:2.10} implies that $\max_{|x|\le n}|\GG_x|=n^{o(1)}$ as $n\to\infty$, $\BbbP$-a.s.
Also, it is not hard to check that, as $\alpha$ decreases, $\scrC_{\infty,\alpha}(\omega)$ increases 
and $\GG_0(\omega)$ decreases. Therefore, \eqref{E:2.10} is a monotone (in $\alpha$) property and we
accordingly define
\begin{equation}
\label{eqn:alpha2}
\alpha_2:=\sup \bigl\{\alpha \in (0, \alpha_1) \colon \text{ \eqref{E:2.10} holds}\} \,.
\end{equation} 

For the discussion to come next, let $\omega\in\Omega$ be such that $\Z^d\setminus\scrC_{\infty,\alpha}(\omega)$ has only finite components. We will now define the aforementioned coarse-grained walk~$\hatX$. First we record the times that the walk~$X$ spends away from~$\scrC_{\infty,\alpha}$. Let us set $T_0:=0$ and define
\begin{equation}
T_{k+1}:=\inf\bigl\{n>T_0+\dots+T_k\colon X_n\in\scrC_{\infty,\alpha}\bigr\}-(T_0+\dots+T_k),\qquad k\ge0.
\end{equation}
The quantity $T_k$ --- which is finite $P_\omega^x$-a.s.\ for all~$x$ and all~$k\ge1$ --- is the time between the $(k-1)$-st and $k$-th visit to~$\scrC_{\infty,\alpha}$. These visits occur at the locations
\begin{equation}
\hatX_\ell:=X_{T_0+\dots+T_\ell},\qquad \ell\ge0.
\end{equation}
The sequence $(\hat X_\ell)$ is a Markov chain on~$\scrC_{\infty,\alpha}$  whose transition kernel is  given by
\begin{equation}
\hat{\cmss P}_\omega(x,y):=P_\omega^x(X_{T_1}=y).
\end{equation}
It is easy to verify that both $\cmss P_\omega$ and $\hat{\cmss P}_\omega$ are reversible with respect to the measure~$\pi_\omega$ (on $\Z^d$ and ~$\scrC_{\infty,\alpha}$ respectively).
Denoting by $\hat{\cmss P}_\omega^n$ the $n$-fold product of $\hat{\cmss P}_\omega$, define
\begin{equation}
\textd_\omega(x,y):=\inf\bigl\{n\ge0\colon\hat{\cmss P}_\omega^n(x,y)>0\bigr\},\qquad x,y\in\scrC_{\infty,\alpha},
\end{equation}
to be the Markov distance (metric) on~$\scrC_{\infty,\alpha}$. As our final assumption, we postulate a uniform diffusive upper bound on the $n$-step transition probability of the coarse-grained walk.

\begin{assumption}
\label{A5}
There is~$\alpha_0\in(0,\alpha_2]$ such that for each~$\alpha\in(0,\alpha_0)$ and each $\rho > 0$, 
there is a $\BbbP_\alpha$-a.s.\ finite random variable~$C=C(\omega)$ such that for $\BbbP_\alpha$-a.e.~$\omega$, 
\begin{equation}
\label{eqn:2.13}
\max_{\begin{subarray}{c}
x\in\scrC_{\infty,\alpha}\\\textd_\omega(0,x)\le \rho n
\end{subarray}}\,\,\sup_{y\in\scrC_{\infty,\alpha}}
\hat{\cmss P}_\omega^n(x,y)\le\frac{C(\omega)}{n^{d/2}},\qquad n\ge1.
\end{equation}
\end{assumption}

By reversibility, \eqref{eqn:2.13} holds also with $\hat{\cmss P}^n_\omega(y,x)$ instead of $\hat{\cmss P}^n_\omega(x,y)$ with a constant $C^\prime(\omega) \leq (2d/\alpha) C(\omega)$. 
The following proposition formally ensures that under the product law all above assumptions are satisfied.

\begin{proposition}
\label{lem:perc_satisfies_assumptions}
Any product law on~$\Omega$ obeys Assumptions~\ref{A1}-\ref{A5}.
\end{proposition}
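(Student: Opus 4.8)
\begin{proofsect}{Proof outline}
The plan is to verify Assumptions~\ref{A1}--\ref{A5} in turn; the first three (and the underlying structural facts about $\scrC_{\infty,\alpha}$) are soft consequences of supercritical percolation theory, while Assumption~\ref{A5} is where the real work lies. For \ref{A1}, a product measure on~$\Omega$ is mixing under the $\Z^d$-shift action --- for cylinder events $A,B$ depending on finitely many edges one has $\BbbP(A\cap\tau_x B)=\BbbP(A)\,\BbbP(B)$ as soon as $x$ is large enough that the two edge sets are disjoint --- and mixing implies ergodicity. For \ref{A2}, note that for each $\alpha>0$ the set $\{b\in\B(\Z^d)\colon\omega_b\ge\alpha\}$ is a Bernoulli bond percolation configuration of density $p_\alpha:=\BbbP(\omega_b\ge\alpha)$, and $p_\alpha\uparrow1$ as $\alpha\downarrow0$ since $\BbbP(\omega_b=0)=0$. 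Hence for all sufficiently small $\alpha$ we have $p_\alpha>p_c(\Z^d)$, so $\scrC_{\infty,\alpha}$ is the a.s.\ unique (Burton--Keane~\cite{Burton-Keane}) infinite cluster; a standard cutset estimate --- bounding the number of minimal edge-cutsets of cardinality~$m$ around a fixed vertex by $C(d)^m$ and the probability that all their edges are weak by $(1-p_\alpha)^m$, with $C(d)(1-p_\alpha)<1$ for $\alpha$ small --- shows simultaneously that $\Z^d\setminus\scrC_{\infty,\alpha}$ has only finite components and that $\theta(p_\alpha):=\BbbP(0\in\scrC_{\infty,\alpha})\to1$ as $\alpha\downarrow0$. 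Since $\scrC_{\infty,\alpha}$ is monotone in~$\alpha$, for each fixed $x\in\Z^d$ one gets $\BbbP\bigl(x\in\bigcup_{k\ge1}\scrC_{\infty,1/k}\bigr)=\lim_k\theta(p_{1/k})=1$, and intersecting over the countably many~$x$ yields \eqref{eqn:A2}; the same bound gives $\alpha_1>0$ in \eqref{eqn:alpha1}.

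For \ref{A3} one fixes $\alpha\in(0,\alpha_1)$ small enough that $C(d)(1-p_\alpha)<1$. If $|\FF_0|\ge k$, then every edge leaving~$\FF_0$ is weak (an endpoint of an edge out of~$\FF_0$ that is reached by a strong edge would itself lie in $\scrC_{\infty,\alpha}$), so the edge boundary $\partial_E\FF_0$ is a cutset of weak edges separating~$0$ from infinity, of cardinality at least $c_d\,k^{(d-1)/d}$ by the lattice isoperimetric inequality. Summing $(1-p_\alpha)^m$ over the at most $C(d)^m$ such cutsets of size $m\ge c_d k^{(d-1)/d}$ yields the stretched-exponential tail $\BbbP(|\FF_0|\ge k)\le C\exp(-c\,k^{(d-1)/d})$; in particular $|\FF_0|$, and hence $|\GG_0|\le 2d\max_{y\colon\langle 0,y\rangle\in\B(\Z^d)}|\FF_y|$, lies in $L^p(\BbbP)$ for every $p<\infty$. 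Conditioning on the positive-probability event $\{0\in\scrC_{\infty,\alpha}\}$ changes these norms by at most a bounded factor, so \eqref{E:2.10} holds and $\alpha_2>0$.

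The hard part is \ref{A5}. The plan is to recognize the coarse-grained walk~$\hatX$, for $\alpha$ small, as a uniformly elliptic random walk on the supercritical cluster $\scrC_{\infty,\alpha}$ carrying a ``good'' random environment. Its trace conductances $\hat\omega_{xy}:=\pi_\omega(x)\,\hat{\cmss P}_\omega(x,y)$ satisfy $\hat\omega_{xy}\le\pi_\omega(x)\le2d$, are bounded below by~$\alpha$ on the edges of $\scrC_{\infty,\alpha}$ (the walk may always jump directly across a strong edge), and are positive for pairs at lattice distance larger than~$1$ only through excursions into the finite holes --- so the jump range is controlled by the hole diameters, which have stretched-exponential tails by the estimate used for~$|\FF_0|$ above. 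Consequently the Dirichlet form of~$\hatX$ is comparable, up to bounded factors and polynomially controlled range corrections, to that of simple random walk on the cluster, and the cluster enjoys volume regularity ($|B(x,r)|\asymp r^d$ for~$r$ beyond a light-tailed random scale) together with a Poincar\'e inequality. One may then invoke the heat-kernel technology for random walks on percolation clusters --- Barlow~\cite{Barlow}, Mathieu--Remy~\cite{Mathieu-Remy}, or Delmotte's~\cite{Delmotte} equivalence of the parabolic Harnack inequality with volume doubling plus Poincar\'e --- to obtain, for every $x\in\scrC_{\infty,\alpha}$, a random scale $N_x(\omega)$ with good (polynomial, in fact stretched-exponential) tails such that $\hat{\cmss P}_\omega^n(x,y)\le C\,n^{-d/2}$ for all~$y$ and all $n\ge N_x(\omega)$, with~$C$ depending only on $d$ and~$\alpha$.

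It then remains to uniformize over the starting point. Since a single step of~$\hatX$ out of a vertex~$x$ is at most $\diam$ of the surrounding hole structure, hence at most polylogarithmically large in~$|x|$ by the hole-size tails, the constraint $\textd_\omega(0,x)\le\rho n$ forces $|x|$ to be at most a polynomial in~$n$; hence $\max\{N_x(\omega)\colon\textd_\omega(0,x)\le\rho n\}=o(n)$, so \eqref{eqn:2.13} holds with a uniform constant for all~$n$ beyond an $\omega$-dependent threshold, and the finitely many smaller~$n$ are absorbed into $C(\omega)$ using $\hat{\cmss P}_\omega^n\le1$ over the then-finite set of admissible~$x$. The reversibility addendum to \eqref{eqn:2.13} follows at once from $\pi_\omega$-reversibility of $\hat{\cmss P}_\omega$ and $\pi_\omega\ge\alpha$ on $\scrC_{\infty,\alpha}$. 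I expect the genuine difficulties to be (i) checking that the trace process really fits the hypotheses of the available heat-kernel theorems --- handling the unbounded but light-tailed jump range and carrying out the Dirichlet-form comparison with simple random walk on the cluster carefully --- and (ii) executing the uniformization over starting points; everything else is routine.
\end{proofsect}
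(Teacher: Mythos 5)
Your verifications of \ref{A1}--\ref{A3} track the paper closely: the paper simply cites Proposition~2.3 of Biskup--Prescott~\cite{Biskup-Prescott} for \ref{A3}, but the cutset argument you give (edge boundary of $\FF_0$ is a weak-edge cutset, Peierls count over $C(d)^m$ cutsets of size $m$, isoperimetry to lower-bound $m$ by $c_d k^{(d-1)/d}$) is in substance what that proposition proves, so there is no real divergence there. For \ref{A5} your route is genuinely different from the paper's. The paper obtains the uniform diffusive bound by citing the isoperimetric/Nash-type heat-kernel estimate of Proposition~6.1 (and formula (6.10)) of Biskup--Prescott together with Lemma~3.4 of~\cite{BBHK}, using the Benjamini--Mossel~\cite{Benjamini-Mossel} argument for the isoperimetric inequality on the dense percolation cluster; the extension to general $\rho$ is a cosmetic adjustment of box sizes in that proposition. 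Crucially, Biskup--Prescott's Proposition~6.1 is formulated \emph{directly for the coarse-grained walk} and thus handles the non-nearest-neighbour jumps through the holes out of the box. Your alternative --- compare the trace Dirichlet form with that of simple random walk on the cluster, verify volume doubling and Poincar\'e, and invoke the Delmotte/Barlow/Mathieu--Remy parabolic-Harnack machinery --- is a legitimate framework for $n^{-d/2}$ upper bounds, and your uniformization over starting points (polylog step length $\Rightarrow$ only polynomially many $x$ with $\textd_\omega(0,x)\le\rho n$, then Borel--Cantelli on the light-tailed good scale $N_x$) is the same kind of argument Lemma~3.4 of~\cite{BBHK} encapsulates. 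The one place where your route is harder than you make it sound is the ``handling the unbounded but light-tailed jump range'' step you flag yourself: Delmotte's equivalence and the percolation heat-kernel results you cite are all nearest-neighbour statements, and the trace walk $\hatX$ is not. Making the comparison rigorous requires either truncating the jumps at a polylogarithmic scale inside each dyadic ball and controlling the truncation error, or proving a PHI for long-range kernels with sub-Gaussian jump tails --- a nontrivial adaptation that the isoperimetric/Nash route (which needs only an isoperimetric inequality for the coarse-grained graph, already supplied by \cite[Proposition~A.2]{BBHK}) sidesteps. So: correct in outline and conclusion, same strategy for \ref{A1}--\ref{A3}, but a different and somewhat more labour-intensive route for \ref{A5} than the citations the paper leans on.
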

\begin{proof}
The proof is essentially contained in \cite{Biskup-Prescott, BBHK} --- only minor modifications are required. 
Indeed, \ref{A1} holds as $\BbbP$ is clearly ergodic. \ref{A2} follows since $\scrC_{\infty, \alpha} \neq \emptyset$ for $\BbbP$-a.s.\ once $\alpha > 0$ is such that $p:=\BbbP(\omega_b \geq \alpha) > p_\cc(d)$, 
where $p_\cc(d)$ is the Bernoulli bond percolation threshold on $\Z^d$. (We are also using that all conductances are positive.) \ref{A3} is covered by Proposition~2.3  of \cite{Biskup-Prescott}. \ref{A5} for $\rho=1$ follows by combining (6.10) in Proposition~6.1 
of \cite{Biskup-Prescott} and Lemma~3.4 in \cite{BBHK}. (As decreasing $\alpha$ permits us to increase~$p$ arbitrarily close to one, the isoperimetric inequality for large sets is proved by the argument from Benjamini and Mossel~\cite[Section~2.4]{Benjamini-Mossel}.) The extension to general $\rho > 0$, requires 
only that we consider boxes of side length $(\rho+1)n$ instead of $2n$ in Proposition~6.1. This can be achieved by a slight reduction of exponent~$\nu$ in formula (6.5) of~\cite{Biskup-Prescott}; Lemma~3.4 in \cite{BBHK} then can be used as is. (We note that the control of isoperimetric volumes provided by \cite[Proposition~A.2]{BBHK} or Benjamini-Mossel's argument in fact yields \eqref{eqn:2.13} with~$\rho n$ replaced by a quantity that grows \emph{exponentially} with~$n^{\nu\frac d{d-1}}$.)
\end{proof}

It is worthy a note that Assumptions~\ref{A1}-\ref{A5} are sufficient to ensure a quenched invariance principle for the corresponding random walk. This follows from Biskup and Prescott~\cite{Biskup-Prescott}.

\subsection{Results}
We are now ready to state our main results. Our first task will be to close the gap between the upper bound in \eqref{E:me} for $d=4$ and the lower bound in \eqref{E:lower2}:

\begin{theorem}
\label{thm:d=4}
Let $d=4$. Then under Assumptions~\ref{A1}-\ref{A5}, for $\BbbP$-a.e.~$\omega$,
\begin{equation}
\cmss P^{2n}_\omega(0,0)=o(n^{-2} \log n),\qquad n\to\infty.
\end{equation}
\end{theorem}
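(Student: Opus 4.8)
\emph{Proof proposal.}
The plan is to revisit the mechanism behind the factor $\log n$ in the $d=4$ line of \eqref{E:me} and show that it is in fact wasteful. Fix $\alpha\in(0,\alpha_0)$ and work throughout under $\BbbP_\alpha$ (recall from \eqref{E:2.5a} and the discussion after Assumption~\ref{A1} that $\{\omega\colon\limsup_n (n^2/\log n)\,\cmss P^{2n}_\omega(0,0)=\star\}$ is shift-invariant, hence trivial, for $\star\in\{0,\infty\}$; since \eqref{E:me} rules out $\star=\infty$, it suffices to prove the claimed $o$-bound with positive probability, in particular along $\BbbP_\alpha$). Pass to the coarse-grained chain $\hatX$ on $\scrC_{\infty,\alpha}$ and decompose real time into coarse steps plus trap excursions: writing $\tau_k:=T_1+\dots+T_k$, we have $X_{2n}=0$ only if $0\in\scrC_{\infty,\alpha}$ and $2n$ is a visit time, so
\[
\cmss P^{2n}_\omega(0,0)=\sum_{k\ge 0}P^0_\omega\bigl(\hatX_k=0,\ \tau_k=2n\bigr).
\]
Conditionally on the $\hatX$-bridge $(\hatX_0,\dots,\hatX_k)$ the excursion times $T_1,\dots,T_k$ are independent, with $T_j$ a local functional of the trap $\GG_{\hatX_{j-1}}$; in particular $E^x_\omega[T_1]$ is controlled in every $L^p(\BbbP_\alpha)$ by Assumption~\ref{A3}. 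Reversibility of both $\cmss P_\omega$ and $\hat{\cmss P}_\omega$ (used e.g.\ in the form $\cmss P^{2n}_\omega(0,0)=\pi_\omega(0)\sum_z\cmss P^n_\omega(0,z)^2/\pi_\omega(z)$, and in the two-sided version of \eqref{eqn:2.13}) lets one replace the full return probability by the coarse return probability evaluated at the relevant number of coarse steps, against the law of the accumulated trap time.

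Next I would reproduce, following the analysis of \cite{BBHK}, the dyadic bookkeeping that yields the $d=4$ estimate. With overwhelming probability the coarse-step count is $\asymp n$ and each coarse step costs $O(1)$ real time, contributing $\lesssim \hat{\cmss P}^{\asymp n}_\omega(0,0)\lesssim C(\omega)\,n^{-d/2}=C(\omega)\,n^{-2}$ by Assumption~\ref{A5}. The remaining contribution is organized according to the length $2^j$ of the \emph{dominant} trap excursion, $1\le j\le\lfloor\log_2(2n)\rfloor$, giving
\[
n^2\,\cmss P^{2n}_\omega(0,0)\ \le\ C(\omega)\ +\ \sum_{j=1}^{\lfloor\log_2(2n)\rfloor} a_j(n,\omega),
\]
where $a_j(n,\omega)$ is the normalized probability that the bridge path reaches a trap of "depth $\asymp 2^j$", pays $\asymp 2^{-j}$ to enter it, lingers there for a time $\asymp 2^j$, pays $\asymp 2^{-j}$ to exit, and still completes a return trip to $0$ of total length $2n$. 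Assumption~\ref{A5} (diffusive spreading of $\hatX$ between excursions, together with the reversed bound $C'(\omega)\le(2d/\alpha)C(\omega)$) is what keeps each $a_j$ finite and uniformly bounded in $n$, which is exactly the source of the $O(\log n)$ in \eqref{E:me}.

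The improvement then rests on the estimate $a_j(n,\omega)\le b_j(\omega)$ with $b_j(\omega)\to 0$ as $j\to\infty$, $\BbbP_\alpha$-a.s. Granting this, $\sum_{j\le N} b_j(\omega)=o(N)$ by Cesàro, so $\sum_{j\le\log_2(2n)} a_j(n,\omega)=o(\log n)$ and the theorem follows. I expect proving $b_j(\omega)\to 0$ to be the main obstacle: one must show that "efficient use of a depth-$2^j$ trap inside a return trip of the matched length" becomes vanishingly likely as the scale $j$ grows. The heuristic is that a deeper trap must sit behind smaller conductances and is therefore accessible only from closer by, while Assumption~\ref{A3} forbids the trap regions $\GG_x$ from growing faster than $|x|^{o(1)}$ and Assumption~\ref{A5} forbids $\hatX$ from spreading faster than diffusively; hence the product "reach probability $\times$ entry cost $\times$ linger probability $\times$ exit cost $\times$ return probability", after the $n^2$ normalization, cannot remain bounded below along a fixed scale $j$ as $n\to\infty$. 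Turning this into a clean per-scale estimate uniform in $n$ — so that the (log-)Cesàro step applies and the BBHK bound $O(\log n)$ is upgraded to $o(\log n)$ — is the technical heart of the argument; the remaining steps (the excursion-time decomposition and the diffusive input) are essentially those already available from \cite{BBHK} and Assumptions~\ref{A1}–\ref{A5}.
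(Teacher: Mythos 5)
Your setup (pass to the coarse‑grained chain $\hatX$, separate time spent on $\scrC_{\infty,\alpha}$ from time spent in the traps, reduce to a dyadic/per‑scale analysis, and aim for a Ces\`aro improvement of the $O(\log n)$ bound) is entirely in the right spirit. But the argument as written has a genuine and explicitly acknowledged gap: the whole theorem rests on the claimed estimate $a_j(n,\omega)\le b_j(\omega)$ with $b_j\to 0$ uniformly in $n$, and you prove nothing about it beyond a heuristic. That estimate is not a routine consequence of what precedes it; it is the theorem. Moreover, the decomposition you choose -- by depth of the \emph{dominant} trap excursion -- is awkward to control: the event ``dominant excursion has length $\asymp 2^j$'' couples the bridge length $\ell$, the trap geometry, and the excursion times in a way that Assumption~\ref{A5} does not directly see, and it is unclear that an $n$-uniform, $\omega$-measurable majorant $b_j$ with $b_j\to 0$ even exists for it.

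The paper's proof organizes the sum differently and fills the gap concretely. It decomposes by $\ell$, the \emph{number of coarse steps} (the variable Assumption~\ref{A5} actually controls), writes
\[
\cmss P_\omega^{4n}(0,0)\le 2n^{-1}\sum_{\ell=1}^{4n}P_\omega^0\Bigl(\hatX_\ell=0,\sum_{i\le\lceil\ell/2\rceil}T_i\ge n\Bigr),
\]
and proves (Lemma~\ref{lemma-3.1}) that each summand is at most $C(\omega)\,\ell^{-d/2}n^{-1}\,E_\omega^0\bigl(\sum_{i\le\lceil\ell/2\rceil}T_i;\ \sum_{i\le\lceil\ell/2\rceil}T_i\ge n\bigr)$. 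The crux --- what replaces your unproved $b_j\to 0$ --- is Lemma~\ref{lemma-Z's}: the sample averages $Z_\ell=\ell^{-1}\sum_{j\le\ell}T_j$ converge to a finite constant $Z_\infty$ both $P_\omega^0$-a.s.\ \emph{and in $L^1(P_\omega^0)$}, for $\BbbP_\alpha$-a.e.\ $\omega$. The $L^1$ part is not free; it uses the dominated random variable $W^\star=\sup_\ell \ell^{-1}\sum_{j<\ell}|\GG_{\hatX_j}|$, Wiener's Dominated Ergodic Theorem, and the moment bounds of Assumption~\ref{A3}. With this in hand one gets (E:3.8): for any $M>Z_\infty$ and any $\epsilon>0$, $\ell^{-1}E_\omega^0(\sum_{i\le\ell}T_i;\sum_{i\le\ell}T_i\ge n)<\epsilon$ for all $\ell$ with $\ell_0\le\ell\le n/M$. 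Splitting the $\ell$-sum at $n/M$ then gives $n^2\cmss P_\omega^{4n}(0,0)\le C(\omega)(\log M+\epsilon\log n)$, and since $\epsilon$ is arbitrary this is $o(\log n)$. So the qualitative ``excess time per coarse step vanishes'' idea you are reaching for is right, but its rigorous form is an $L^1$-ergodic theorem, not a per-scale estimate $b_j\to 0$; the latter would require a uniform-in-$n$ bound that your sketch neither defines precisely nor establishes.

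One further caveat: you cannot invoke the conditional independence of $T_1,\dots,T_k$ given the $\hatX$-bridge quite as freely as stated (the $T_j$ are functions of the excursion of $X$ between consecutive cluster visits and depend on the starting point $\hatX_{j-1}$, not just on $\GG_{\hatX_{j-1}}$ as an abstract set), and the reduction to $\BbbP_\alpha$ via the zero-one law is fine but should be stated in terms of the events $\AA^\pm_\star$ that are actually shift-invariant, as in the paper, rather than the ad hoc set you wrote. Neither of these is fatal, but they should be cleaned up.
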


This settles an open question that was left unanswered in~\cite{BBHK} and~\cite{Biskup-Boukhadra}. As we will see in Section~\ref{sec3}, the argument seamlessly yields also the proof of \eqref{E:1.7}.

The next set of results concerns trapping effects. As already mentioned, we will describe
these by means of the times~$T_j$ the walk~$X$ takes between successive 
visits to~$\scrC_{\infty,\alpha}$. Define
\begin{equation}
\ell_n:=\inf\Bigl\{m\ge1\colon\sum_{k=1}^m T_k \geq 2n\Bigr\},\qquad n\ge1,
\end{equation}
and for $1 \leq r \leq \ell$ and any $\theta\ge1$, consider the sets
\begin{equation}
\label{E:2.19d}
G_{\,\ell,r}(\theta):=\Biggl\{(t_1,\dots,t_\ell)\in\N^\ell\colon \min_{1\le i_1<\dots<i_r\le\ell}\sum_{\begin{subarray}{c}
k=1,\dots,\ell\\ k\not\in\{i_1,\dots,i_r\}
\end{subarray}}
t_k\le\theta\Biggr\}.
\end{equation}
Use these to define the event
\begin{equation}
\EE_{\omega,\alpha}^{(r)}(\theta,n):=\big\{r\le\ell_{n}\le\theta,\,(T_1,\dots,T_{\ell_{n}})\in G_{\,\ell_{n},r}(\theta)\bigr\}.
\end{equation}
which, we note, depends explicitly on $\omega$ and $\alpha$. This definition will 
be made clear once we state our first trapping result:

\begin{theorem}
\label{thm:trapping}
Let $d \geq 4$ and suppose that Assumptions~\ref{A1}-\ref{A5} hold. Set $r:=\lfloor \tfrac d2-1\rfloor$
and define
\begin{equation}
\lambda_n(\omega):=n^{d/2}\cmss P_\omega^{2n}(0,0)
\end{equation}
Then, for all $\alpha\in(0,\alpha_0)$, there is a $\BbbP_\alpha$-a.s.\ finite $C=C(\omega)$ such that for all $n\ge1$ and all~$\theta$ with
$1\le \theta\le n/2$ there is $n^\star\in\{n-\theta,\dots,n\}$ for which
\begin{equation}
\label{E:2.16}
P_\omega^0\bigl(\EE_{\omega,\alpha}^{(r)}(\theta,n^\star)\big|X_{2n^\star}=0\bigr)
\ge 1-\frac{C(\omega)}{\lambda_n(\omega)}
\begin{cases}
\frac n\theta\log(\ffrac n\theta),\qquad&\text{if }d=4,
\\*[3mm]
\bigl(\tfrac n\theta\bigr)^{d/2-1},\qquad&\text{if }d\ge5.
\end{cases}
\end{equation}
\end{theorem}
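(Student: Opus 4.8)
Fix $\alpha\in(0,\alpha_0)$ and write $m:=n^\star$, $N:=2m$. The backbone of the argument is the excursion (renewal) decomposition of $X$ relative to $\scrC_{\infty,\alpha}$. Since every visit of $X$ to $\scrC_{\infty,\alpha}$ occurs at one of the renewal times $T_1+\dots+T_j$, we have
\[
\cmss P_\omega^{N}(0,0)=\sum_{\ell\ge1}P^0_\omega\Bigl(\hatX_\ell=0,\ \textstyle\sum_{k=1}^{\ell}T_k=N\Bigr),
\]
and, conditionally on the coarse trajectory $(\hatX_j)_{j\ge0}$, the excursion lengths $(T_k)$ are independent with $T_k$ depending only on $(\hatX_{k-1},\hatX_k)$. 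Because the diagonal heat kernel is non-increasing, \eqref{E:2.5a} gives $\cmss P_\omega^{N}(0,0)\ge\cmss P_\omega^{2n}(0,0)=\lambda_n(\omega)\,n^{-d/2}$ for all $m\le n$; therefore it suffices to find $m=n^\star\in\{n-\theta,\dots,n\}$ for which the \emph{unconditional} probability $P^0_\omega\bigl(\EE^{(r)}_{\omega,\alpha}(\theta,m)^c\cap\{X_{2m}=0\}\bigr)$ is at most $C(\omega)\,n^{-d/2}$ times the bracketed factor in \eqref{E:2.16}, and then divide by $\lambda_n(\omega)\,n^{-d/2}$.

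All the estimates run on two quantitative inputs. First, Assumption~\ref{A5} at $x=0$ (where $\textd_\omega(0,0)=0$) gives the diffusive bound $\hat{\cmss P}^\ell_\omega(0,y)\le C(\omega)\ell^{-d/2}$ uniformly in $y$ and $\ell\ge1$, and by reversibility $\hat{\cmss P}^\ell_\omega(y,0)\le C'(\omega)\ell^{-d/2}$. Second, a single-excursion length estimate: in a fixed environment a sojourn in a fixed finite component $\GG_x$ has an exponential tail, and Assumption~\ref{A3} controls the sizes $|\GG_x|$ over growing boxes (so $\max_{|x|\le n}|\GG_x|=n^{o(1)}$); the essential qualitative consequence is that an excursion of length of order $n$ can take place only inside a component whose ``depth'' grows with $n$, hence one the coarse walk can enter and leave only after a number of steps that also grows. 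From this one extracts a tail bound showing $P^0_\omega\bigl(\sum_{k=1}^{\ell}T_k\ge t\bigr)$ is negligible unless $\ell$ is large, together with a Markov-type control on $\sum_{k=1}^{\ell}T_k$ with a $\BbbP_\alpha$-a.s.\ finite constant.

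Now write $\EE^{(r)}_{\omega,\alpha}(\theta,m)^c=\{\ell_m<r\}\cup\{\ell_m>\theta\}\cup\{r\le\ell_m\le\theta,\ (T_1,\dots,T_{\ell_m})\notin G_{\ell_m,r}(\theta)\}$ and bound the three pieces. The event $\{\ell_m<r\}$ is empty in $d\in\{4,5\}$ (there $r=1$); for $d\ge6$ it forces at most $r-1$ excursions of total length $\ge N$, hence one of length $\gtrsim n/r$, while simultaneously $\hatX$ returns to $0$ in fewer than $r$ steps and so never leaves a fixed bounded neighbourhood of $0$ --- by the single-excursion estimate this costs a factor exponentially small in $n$. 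On $\{r\le\ell_m\le\theta,\ (T)\notin G_{\ell_m,r}(\theta)\}$ one has $\ell_m\ge r+1$ and, after deleting the $r$ largest excursions, a residual total time exceeding $\theta$ spread over the remaining ones; conditioning on the coarse trajectory and on the $r$ indices carrying the long excursions, and summing over $\ell_m\in\{r+1,\dots,\theta\}$, over the lengths and positions of the residual excursions, and using $\hat{\cmss P}^{\ell_m}_\omega(0,0)\le C(\omega)\ell_m^{-d/2}$, produces the bound $C(\omega)\,n^{-d/2}(n/\theta)^{d/2-1}$ --- with an additional factor $\log(n/\theta)$ when $d=4$. It is exactly here that $r=\lfloor\tfrac d2-1\rfloor$ is used: $r+1$ is the smallest number of unconstrained excursions whose joint contribution is no longer summable at the diffusive rate $\ell^{-d/2}$. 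Finally, on $\{\ell_m>\theta\}$ the decomposition restricted to $\ell>\theta$ and $\sum_{\ell>\theta}\hat{\cmss P}^\ell_\omega(0,0)\lesssim C(\omega)\theta^{-(d/2-1)}$ give only $P^0_\omega(\ell_m>\theta,\ X_{2m}=0)\le C(\omega)\theta^{-(d/2-1)}$, which is short of the target by a power of $n$; this is recovered by the choice of $n^\star$: summing over $m\in\{n-\theta,\dots,n\}$ turns the equality $\sum_{k=1}^{\ell}T_k=2m$ into the window $\sum_{k=1}^{\ell}T_k\in[2n-2\theta,2n]$, and the Markov-type excursion bound (with $\theta\le n/2$) makes the sum over $m$ of these probabilities at most $C(\omega)\,n^{-d/2}(n/\theta)^{d/2-1}$ (again with the extra $\log(n/\theta)$ in $d=4$), so pigeonhole yields an admissible $n^\star$. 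Combining the three pieces and dividing by $\lambda_n(\omega)n^{-d/2}$ gives \eqref{E:2.16}.

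The main obstacle is the single-excursion length estimate and its deployment in the last two pieces: lacking uniform ellipticity, one individual excursion can be arbitrarily long, and the estimate must be sharp enough to suppress the genuinely bad scenarios (too many coarse steps, or time not concentrated on $\le r$ excursions) yet lenient enough not to rule out the dominant trapping picture --- a handful of very long excursions at far-off deep traps. Reconciling these demands, and extracting the precise exponents $(n/\theta)^{d/2-1}$ and the four-dimensional logarithm from the combinatorics of $G_{\ell,r}(\theta)$, is the delicate part; by comparison the $n^\star$-averaging is a soft device, trading the rigid equality $\sum_k T_k=2n^\star$ for a window of width $2\theta$ to supply the missing power of $n$.
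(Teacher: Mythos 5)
Your proposal reproduces the paper's overall architecture: decompose $\EE_{\omega,\alpha}^{(r)}(\theta,m)^\cc$ into pieces, control the ``too-many-coarse-steps'' piece and the ``time-not-concentrated'' piece via the diffusive bound (Assumption~\ref{A5}), excursion-time moments (Assumption~\ref{A3}), and a Markov inequality, and then average over $m\in\{n-\theta,\dots,n\}$ and pigeonhole to produce $n^\star$. The opening reduction (it suffices to bound the unconditional probability by $C(\omega)n^{-d/2}$ times the bracket, then divide by $\cmss P_\omega^{2n^\star}(0,0)\ge\lambda_n(\omega)n^{-d/2}$) is exactly what the paper does.

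However, there is a genuine gap in your handling of the $G$-complement piece $\{r\le\ell_m\le\theta,\,(T)\notin G_{\ell_m,r}(\theta)\}$: you claim it can be bounded \emph{for each fixed $m$} by $C(\omega)\,n^{-d/2}(n/\theta)^{d/2-1}$, with the $n^\star$-averaging reserved only for $\{\ell_m>\theta\}$. That claim is off by a factor of~$\theta$. The best bound available for a \emph{fixed} $m$ follows from \eqref{E:4.15d}, the Markov inequality and Lemma~\ref{lemma-3.7}: the $\ell$-th term is at most $\tfrac{(r+1)!}{n\theta^r}\,C(\omega)\,\ell^{\,r+1-d/2}$, and summing over $\ell\le\theta$ gives $C(\omega)\,n^{-1}\theta^{2-d/2}$; meanwhile the target is $C(\omega)\,n^{-d/2}(n/\theta)^{d/2-1}=C(\omega)\,n^{-1}\theta^{1-d/2}$. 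The missing factor $\theta^{-1}$ is recovered precisely by dividing by the window length $\theta+1$ in the average $\frac1{\theta+1}\sum_{m=n-\theta}^n(\cdots)$ --- that is, Proposition~\ref{prop-3.5} is by construction an \emph{averaged} statement, just like Proposition~\ref{prop-3.4}. Were your claimed fixed-$m$ bound correct, the $n^\star$-averaging would be unnecessary for this piece, and the much harder Theorem~\ref{thm:enhanced} (with its regularity estimates in Section~\ref{sec4}) would be redundant. A related inaccuracy: the $\log(n/\theta)$ factor in $d=4$ arises from summing $\ell^{1-d/2}=\ell^{-1}$ over $\ell\ge\theta$ in Proposition~\ref{prop-3.4} (the $\ell_m\ge\theta$ piece), not from the $G$-complement piece; for the latter, in $d=4$ with $r=1$ one has $r+1-d/2=0$, so $\sum_{\ell\le\theta}\ell^{0}=\theta$ and no logarithm appears.

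Two smaller remarks. You rightly flag the piece $\{\ell_m<r\}$ (nonempty only for $d\ge6$), which the paper's proof of Theorem~\ref{thm:trapping} does not explicitly address; your sketch (bounded excursion region, exponential tail for a long sojourn in a fixed finite component) is the right idea, though it needs the observation that the coarse distance from $0$ is at most $r-1$ so the relevant components have $\omega$-dependent but finite total size, making the probability super-polynomially small in $n$ and hence negligible. Finally, the paper's proof of Proposition~\ref{prop-3.5} does not ``condition on the $r$ indices carrying the long excursions'' as you describe; it converts the event into a moment of $\sum_{\ul i\in I(r+1,\ell)}\prod_k T_{i_k}$ via \eqref{E:4.15d} and bounds that moment in Lemma~\ref{lemma-3.7} --- an equivalent but cleaner route that avoids any combinatorial case analysis over the identity of the long excursions.
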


In order to interpret the statement note that, as soon as $\cmss P_\omega^{2n}(0,0)$ decays subdiffusively along a subsequence of $n$'s tending to infinity, we have $\lambda_n(\omega)\to\infty$ (along this subsequence) and so we can choose $\theta=\theta_n$ in such a way that $\theta_n=o(n)$ while the right-hand side of \eqref{E:2.16} tends to one. For the corresponding sequence of~$n^\star$'s, the event on the left then holds with high probability. Now, on $\EE_{\omega,\alpha}^{(r)}(\theta,n)$, by time~$2n$ the walk~$X$ makes at most~$\theta$ visits to~$\scrC_{\infty,\alpha}$ while spending all but~$\theta$ units of time in at most~$r$ components of~$\Z^d\setminus\scrC_{\infty,\alpha}$. If~$\theta=o(n)$, the pigeon-hole principle ensures that at least one of these components traps the walk for a time of order~$n$.

\smallskip
Our final theorem addresses one of the deficiencies of Theorem~\ref{thm:trapping}; namely, the fact that the conclusion concerns~$n^\star$ instead of~$n$:

\begin{theorem}
\label{thm:enhanced}
Let $d \geq 4$ and suppose that Assumptions~\ref{A1}-\ref{A5} hold. Set $r:=\lfloor \tfrac d2-1\rfloor$
and define
\begin{equation}
\label{eqn:Zeta}
\zeta_n(\omega):=
\begin{cases}
\displaystyle\frac{n^2}{\sqrt{\log n}}\,\cmss P_\omega^{2n}(0,0),\qquad&\text{if }d=4,
\\*[3mm]
n^{(\ffrac d4+1)}\,\cmss P_\omega^{2n}(0,0),\qquad&\text{if }5 \le d \le 8,
\\*[2mm]
n^3\,\cmss P_\omega^{2n}(0,0),\qquad&\text{if }d\ge9.
\end{cases}
\end{equation}
Then, for all $\alpha\in(0,\alpha_0)$, there is a 
$\BbbP_\alpha$-a.s.\ finite $C=C(\omega)$ such that for all $n\ge1$ and 
all $\theta$ with $1 \le \theta \le n$,
\begin{equation}
\label{E:2.16a}
P_\omega^0\bigl(\EE_{\omega,\alpha}^{(r)}(\theta,n)\big|X_{2n}=0\bigr)
\ge 1-\frac{C(\omega)}{\zeta_n(\omega)}
\begin{cases}
\displaystyle
\sqrt{\log(\ffrac n\theta)},
\qquad&\text{if }d=4,
\\
\displaystyle \left(\frac n\theta \right)^{(\ffrac d4-1)},\qquad&\text{if } 5 \le d \le 8,
\\*[3mm]
\displaystyle \frac{n}{\theta^{\ffrac{d}{4}-1}} + 1 ,\qquad&\text{if } d \ge 9.
\end{cases}
\end{equation}
\end{theorem}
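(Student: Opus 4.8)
The plan is to deduce Theorem~\ref{thm:enhanced} from Theorem~\ref{thm:trapping} by upgrading the conclusion from the auxiliary time $n^\star$ to the time $n$ itself. The main issue is that Theorem~\ref{thm:trapping} only produces \emph{some} $n^\star\in\{n-\theta,\dots,n\}$ for which the trapping event is likely under $P_\omega^0(\,\cdot\mid X_{2n^\star}=0)$, whereas we want the statement for the walk conditioned to return at the fixed time $2n$. First I would relate the two conditioned laws. Writing $\cmss P_\omega^{2n}(0,0)=\sum_z \cmss P_\omega^{2n^\star}(0,z)\cmss P_\omega^{2(n-n^\star)}(z,0)$ and noting that $2(n-n^\star)\le 2\theta$ is short, one controls the ``short bridge'' factor: on the strong component the walk can traverse a bounded region in $\le 2\theta$ steps with probability bounded below in terms of $\alpha$ and the $\textd_\omega$-distance, and the contribution of $z$ far from $0$ is negligible by the Gaussian-type bound in Assumption~\ref{A5}. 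This lets me write $P_\omega^0(A\mid X_{2n}=0)$ as a weighted average, over admissible split times $n^\star$ and midpoints $z$, of quantities comparable to $P_\omega^0(\tilde A\mid X_{2n^\star}=z)$ for a slightly enlarged trapping event $\tilde A$ (enlarged to absorb the $\le 2\theta$ extra steps into the exceptional budget $\theta$, at the cost of replacing $\theta$ by $O(\theta)$ and $n^\star$ by a nearby value). The monotonicity \eqref{E:2.5a} of the diagonal heat kernel keeps the denominators under control: $\cmss P_\omega^{2n^\star}(0,0)$ and $\cmss P_\omega^{2n}(0,0)$ differ by at most a constant factor when $n-n^\star\le\theta\le n$.

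The second ingredient is to average the bound of Theorem~\ref{thm:trapping} over $n^\star$ rather than use it for a single good $n^\star$. Apply \eqref{E:2.16} with $n$ replaced by each $m\in\{n-\theta,\dots,n\}$; this gives, for each such $m$, some $m^\star\in\{m-\theta,\dots,m\}$ with the trapping probability at $m^\star$ large. Summing $\cmss P_\omega^{2m^\star}(0,0)$ over a range of $m$'s and comparing with $\sum_m\cmss P_\omega^{2m}(0,0)$ — again via \eqref{E:2.5a}, which makes the summands roughly constant over a window of length $\theta$ — converts the pointwise subdiffusivity parameter $\lambda_n(\omega)$ into the \emph{averaged} quantity that, after multiplying by $n^{d/2}$ and dividing by the window length, is exactly what appears as $\zeta_n(\omega)$: the extra powers of $n$ in \eqref{eqn:Zeta} (namely $n^{d/4+1}$ for $5\le d\le 8$, $n^3$ for $d\ge9$, and $n^2/\sqrt{\log n}$ for $d=4$) come from optimizing the choice of $\theta$ in the averaging, i.e. balancing the loss $\theta$ incurred by enlarging the exceptional-time budget against the gain $(n/\theta)^{d/2-1}$ (resp. $\frac n\theta\log\frac n\theta$) in \eqref{E:2.16}. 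Concretely, for $d\ge 9$ the term $n/\theta^{d/4-1}+1$ in \eqref{E:2.16a} reflects that one cannot push $\theta$ all the way down without the additive ``$+1$'' (a single bad return time of size $\le\theta$) dominating; the crossover at $d=8$ is where the optimal $\theta$ stops being $\theta\asymp n$ and the bound saturates.

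The key steps, in order, are: (i) express the $2n$-return bridge as a concatenation of a $2n^\star$-return-ish piece and a short $\le 2\theta$ piece, using Assumption~\ref{A5} to localize the midpoint $z$ to within $\textd_\omega$-distance $O(\rho\theta)$ of the origin and to bound the tail; (ii) show the short piece can be ``completed'' to close the loop at a nearby vertex with probability bounded below, so that conditioning at time $2n$ is, up to constants, dominated by a mixture of conditionings at times $2m^\star$; (iii) enlarge the trapping event $\EE^{(r)}_{\omega,\alpha}(\theta,\cdot)$ by $O(\theta)$ in both the time budget and the number of $\scrC_{\infty,\alpha}$-visits to absorb the short piece, noting this is still within the scope of \eqref{E:2.16} with $\theta\mapsto c\theta$; (iv) invoke \eqref{E:2.5a} to replace all heat kernels in the relevant window by $\cmss P_\omega^{2n}(0,0)$ up to constants, turning $\lambda_n$ into $\zeta_n$; (v) read off the three dimension regimes by inserting the $d$-dependent right-hand side of \eqref{E:2.16} and simplifying. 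I would handle $d=4$ in parallel throughout, with $\log$ factors tracked carefully, since that is where the $\sqrt{\log}$ in $\zeta_n$ originates from the $\frac n\theta\log(\frac n\theta)$ term combined with the optimal $\theta$.

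The hard part will be step (ii): making precise, with the right uniformity in $\omega$ and the right dependence on $\theta$, the claim that a short excursion of length $\le 2\theta$ ending at a vertex $z$ near $0$ can be ``capped off'' back to $0$ within the enlarged trapping budget without losing more than a constant in probability. This requires a quantitative lower bound on $\hat{\cmss P}_\omega^{k}(z,0)$ for $k\le O(\theta)$ and $\textd_\omega(z,0)\le O(\theta)$ — essentially a matching near-diagonal \emph{lower} bound to go with the upper bound of Assumption~\ref{A5} — together with care that the capping excursion does not itself create spurious trapping that would violate the $r$-component structure in $G_{\ell,r}$. One can obtain such a lower bound from the ellipticity of the coarse-grained walk on $\scrC_{\infty,\alpha}$ (conductances bounded below by $\alpha$) plus connectivity, but keeping the constants $\BbbP_\alpha$-a.s.\ finite and independent of $\theta$ is the delicate point.
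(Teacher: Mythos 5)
Your proposal takes a genuinely different route from the paper's, and it has a real gap at exactly the point you flag as ``the hard part.'' The paper does not deduce Theorem~\ref{thm:enhanced} from Theorem~\ref{thm:trapping}. It instead proves a new ingredient, Proposition~\ref{lem:T_pmf_bound}: a regularity estimate on the single-excursion law $n\mapsto P_\omega^x(\hat X_1=y,\,T_1=n)$, obtained by spectral analysis of the substochastic kernel $\cmss Q$ of the walk restricted to $\GG_x\cap\GG_y$. This yields both the decay $O(n^{-2})$ and the increment bound $O(k/n^3)$ when the duration is shifted by $2k$. Proposition~\ref{prop-4.4} then compares $P_\omega^0(\EE_{\omega,\alpha}^{(r)}(\theta,m),X_{2m}=0)$ directly to $P_\omega^0(\EE_{\omega,\alpha}^{(r)}(\theta,n),X_{2n}=0)$ via a one-to-one \emph{path transformation}: in a good coarse-grained path of duration $2m$, add $2(n-m)$ units of time to the single longest trap sojourn, landing again in a good path of duration $2n$. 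Injectivity of this map plus the spectral regularity estimate turns a sum of pointwise probability drops into the stated bound. Crucially, the argument never leaves the diagonal and never needs a lower bound on any transition probability; the final optimization is over an auxiliary window width $\Delta$ that is decoupled from $\theta$.

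Your step~(ii), by contrast, requires a near-diagonal \emph{lower} bound on $\hat{\cmss P}_\omega^k(z,0)$, uniform over $k\le O(\theta)$ and $\textd_\omega(0,z)\le O(\theta)$, with $\BbbP_\alpha$-a.s.\ finite constants. Nothing in Assumptions~\ref{A1}--\ref{A5} supplies this; Assumption~\ref{A5} is an upper bound only, and a parabolic-Harnack-type lower bound on $\scrC_{\infty,\alpha}$ uniformly over all the scales you need would require substantial extra machinery not invoked in the paper. There is also a second, structural obstruction you do not address: the bridge decomposition $\cmss P_\omega^{2n}(0,0)=\sum_z\cmss P_\omega^{2n^\star}(0,z)\cmss P_\omega^{2(n-n^\star)}(z,0)$ forces you to control $P_\omega^0(\,\cdot\mid X_{2n^\star}=z)$ for $z\ne 0$, but Theorem~\ref{thm:trapping} --- and Propositions~\ref{prop-3.4}--\ref{prop-3.5} from which it follows --- are proved only for the diagonal conditioning $\hat X_\ell=0$, where the heat-kernel monotonicity \eqref{E:2.5a} does the work of converting joint bounds into conditional ones. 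Extending those estimates to $\hat X_\ell=z$ and then resumming against the short bridge is not a cosmetic adaptation. The path transformation combined with the spectral regularity of Proposition~\ref{lem:T_pmf_bound} sidesteps both issues at once, which is why that route succeeds where the conditioning-transfer route stalls.
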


As before, once $\zeta_n\to\infty$ along a subsequence of~$n$'s, we can choose $\theta=o(n)$ so that the right-hand side tends to one (along the subsequence). Note that this will be possible when the 
decay of $\cmss P_\omega^{2n}(0,0)$ is {\em sufficiently} slower than $n^{-d/2}$.

\smallskip
A second deficiency of Theorem~\ref{thm:trapping} is the inability to exclude the possibility of multiple trapping locations. This is only an issue in $d\ge6$ because $r=1$ for $d=4,5$. Unfortunately, we do not know how to overcome this even for a strongly subdiffusive decay.

\subsection{Discussion\label{subs:Discussion}}
\noindent
We will finish with a couple of remarks on the scope and extensions of the above results. First, both Theorem~\ref{thm:trapping} and \ref{thm:enhanced} admit a slightly stronger formulation. Namely, one can get \eqref{E:2.16} and \eqref{E:2.16a} for $\BbbP$-a.e.\ $\omega$ and \emph{all} $\alpha\in(0,\alpha_0)$ provided the constant $C(\omega)$ retains an explicit dependence on~$\alpha$. We emphasize that this does not follow automatically from the above as $\EE_{\omega,\alpha}^{(r)}(\theta,n)$ depends explicitly on~$\alpha$. To control the continuum of possible~$\alpha$'s, we use that, for $\BbbP$-a.e.\ $\omega$,
\begin{equation}
0<\alpha\le\alpha'\le\alpha_0\quad\Rightarrow\quad\EE_{\omega,\alpha}^{(r)}(\theta,n)\subseteq\EE_{\omega,\alpha'}^{(r)}(\theta,n).
\end{equation}
In addition, as long as $\GG_0(\omega)$ is finite --- which happens for all $\alpha\in(0,\alpha_0)$ $\BbbP$-a.s.\ --- the conclusion is not affected by the fact that the walk does not start on the infinite component.

As to the need for a choice of~$n^\star$ in Theorem~\ref{thm:trapping}, we point out that the proof actually tells us more. Indeed, writing the right-hand side of \eqref{E:2.16} as $1-q_n$, from \eqref{eqn:4.17} we have
\begin{equation}
\#\Bigl\{m\in\{n-\theta,\dots,n\}\colon P_\omega^0\bigl(\EE_{\omega,\alpha}^{(r)}(\theta,m)\big|X_{2m}=0\bigr)\le 1-\epsilon\Bigr\}\le \frac1\epsilon q_n(\theta+1)
\end{equation}
for any fixed~$\epsilon>0$. Hence, as soon as the error probability~$q_n$ tends to zero, trapping
occurs at all but an $o(1)$-fraction of times in $\{n-\theta, \dots, n\}$ (just choose $\epsilon:=\sqrt{q_n}$). 
We, in fact, believe the following:
\begin{conjecture}
There is $\BbbP_\alpha$-a.s.\ finite~$C=C(\omega)$ such that, for all~$\theta$ sufficiently close to~$n$, the bound \eqref{E:2.16} holds also for~$n^\star=n$. 
\end{conjecture}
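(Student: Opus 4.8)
\smallskip\noindent\emph{A possible route to the conjecture.}
The loss of sharpness in passing from $n^\star$ to $n$ enters only at the final step of the proof of Theorem~\ref{thm:trapping}. There one controls the Green's-function–type sum
\[
\sum_{m=n-\theta}^{n}P_\omega^0\bigl(\bigl(\EE_{\omega,\alpha}^{(r)}(\theta,m)\bigr)^\cc\cap\{X_{2m}=0\}\bigr)
\]
by $C(\omega)\,n^{-d/2}(\theta+1)\bigl[\tfrac n\theta\log\tfrac n\theta\text{ or }(\tfrac n\theta)^{d/2-1}\bigr]$, using Assumption~\ref{A5} and the tail estimates on the excursion times, and then extracts a single favourable $n^\star$ by dividing through by $\sum_{m=n-\theta}^{n}\cmss P_\omega^{2m}(0,0)\ge(\theta+1)\cmss P_\omega^{2n}(0,0)$, the latter being the monotonicity \eqref{E:2.5a}. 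The conjecture asks to replace this averaging by a genuinely local estimate, and only in the regime $\theta\asymp n$, where the window $\{n-\theta,\dots,n\}$ has length of order $n$.

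The plan is to bound the single term $P_\omega^0\bigl(\bigl(\EE_{\omega,\alpha}^{(r)}(\theta,n)\bigr)^\cc\cap\{X_{2n}=0\}\bigr)$ head-on, re-running the proof of Theorem~\ref{thm:trapping} with the terminal time frozen at $2n$. Decomposing the walk into its excursions off $\scrC_{\infty,\alpha}$ and conditioning on the coarse-grained trajectory $\hatX_0,\dots,\hatX_\ell$ (along which the excursion durations $T_1,\dots,T_\ell$ are independent given $\omega$) and on the profile $(t_1,\dots,t_\ell)$, the contribution of ``bad'' profiles is controlled by two ingredients: a return probability of the coarse-grained walk — bounded by \eqref{eqn:2.13} at a single value of $\ell$, exactly as before — and the probability that the durations add up to exactly $2n$ while the profile is bad. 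The one place where the original argument exploits the freedom of the terminal time is the latter: summing over $m$ replaces this one-point probability by the interval probability $P_\omega^0(\sum_iT_i\in[2(n-\theta),2n],\ \dots)$, which is at most $1$ for free. To recover the lost factor one should prove a quenched local limit bound for the total duration: away from trapping — i.e.\ on the sub-event that at most $r$ excursions are long, where the relevant increments are the truncations $T_i\wedge\theta$ — one expects
\[
P_\omega^0\Bigl(\textstyle\sum_{i=1}^{\ell}(T_i\wedge\theta)=t\Bigr)\le\frac{C(\omega)}{\sqrt t},\qquad \ell\le\theta,\quad \tfrac12 n\le t\le 2n,
\]
using that truncation at level $\theta$ makes the conditional second moment of $T_1$ under $\BbbP_\alpha$ only $\theta^{o(1)}$ (a consequence of \eqref{E:2.10}, since $T_1$ is essentially governed by $|\GG_{\hatX_0}|$) and that the coarse-grained trajectory samples an ergodic environment. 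Plugging this in trades the interval bound for a one-time bound at the cost of one factor of order $\sqrt n$ (or $n$, depending on where exactly the summation sits) — precisely the slack freed up because $\theta+1$ is of order $n$ — whence, dividing by $\cmss P_\omega^{2n}(0,0)=\lambda_n(\omega)n^{-d/2}$, one recovers \eqref{E:2.16} at $n^\star=n$.

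A more elementary alternative — which I would expect to succeed cleanly in $d=4$ but only partially in $d\ge5$ — is a surgery on excursion profiles: a profile of total time $2m$ that is bad for $(\theta,m)$ is turned into a profile of total time $2n$ bad for $(\theta-O(n-m),n)$ by appending $O(n-m)$ excursions of length one at the end of the coarse-grained trajectory. Since $\theta\asymp n$, one still has $\theta-O(n-m)\asymp n$ and hence $\bigl(\EE_{\omega,\alpha}^{(r)}(\theta-O(n-m),n)\bigr)^\cc\subseteq\bigl(\EE_{\omega,\alpha}^{(r)}(\theta_0,n)\bigr)^\cc$ for a suitable $\theta_0$ of order $n$, while the transition weights picked up are bounded below by a positive $\omega$-dependent constant. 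This makes the individual summands comparable across the window up to an $\omega$-dependent factor; combined with the known bound on the sum and the a priori bounds \eqref{E:me}--\eqref{E:diffuse} — which keep $\cmss P_\omega^{2m}(0,0)/\cmss P_\omega^{2n}(0,0)$ bounded for $m\ge\epsilon n$ when $d=4$, since then $n^{d/2-2}=1$ — one obtains \eqref{E:2.16} at $n^\star=n$ in $d=4$; for $d\ge5$ that ratio is only polynomially controlled and surgery alone is not enough.

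The main obstacle is the quenched local limit estimate for the truncated excursion durations. The tail of $T_1$ is genuinely heavy and strongly environment-dependent, so the bound has to be uniform over the growing family of shifted environments $\{\tau_x\omega\colon\textd_\omega(0,x)\le\rho n\}$ that the walk can sample — the same type of uniformity that forced the isoperimetric inputs behind Assumption~\ref{A5}. As in the extension of \eqref{eqn:2.13} to all $\rho>0$ in Proposition~\ref{lem:perc_satisfies_assumptions}, one should probably expect to secure such a bound only after trading part of the $\omega$-dependent constant for a small polynomial-in-$n$ correction; accordingly the conjecture is plausibly provable with ``$\theta$ sufficiently close to $n$'' read as ``$\theta\ge(1-\epsilon)n$ for a fixed $\epsilon>0$'', which is presumably its intended scope.
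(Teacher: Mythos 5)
The statement you are asked about is explicitly labeled a \emph{Conjecture} in the paper and carries no proof --- the authors only record, in the paragraph following it, that the regularity estimate of Proposition~\ref{prop-4.4} ``may be handy.'' So there is nothing in the paper against which your text can be verified; it can only be assessed on its own terms, and on those terms it is a plan rather than a proof. You correctly diagnose where the $n^\star$ enters (the averaging over $m\in\{n-\theta,\dots,n\}$ against the lower bound $(\theta+1)\cmss P_\omega^{2n}(0,0)$ furnished by \eqref{E:2.5a}), and the general idea of trading the interval bound on $\sum_iT_i$ for a pointwise one is indeed the natural route; this is consistent with the authors' own hint via Proposition~\ref{lem:T_pmf_bound}.

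However, both of your concrete suggestions leave genuine gaps. The first hinges on a quenched local limit bound
$P_\omega^0\bigl(\sum_{i\le\ell}(T_i\wedge\theta)=t\bigr)\le C(\omega)/\sqrt{t}$, uniformly over $\ell\le\theta$ and $t\asymp n$. Nothing in the paper supplies this, and it is not a soft consequence of \eqref{E:2.10}: the increments $T_i\wedge\theta$ are neither i.i.d.\ nor independent of the coarse-grained trajectory, their conditional laws vary with the environment seen from $\hatX_{i-1}$, and after truncation the distribution is still supported on all of $\{1,\dots,\theta\}$ with an $\omega$-dependent profile; a quenched LLT for such sums uniformly over the exponentially many coarse-grained trajectories of length $\le\theta$ is a substantial new result, not a routine input. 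Moreover, even granting it, you have not checked the exponent bookkeeping --- the gain is at most a factor $\sqrt{n}$ per excursion block, and it is unclear that after resumming over $\ell$ this reproduces the right-hand side of \eqref{E:2.16} rather than a weaker bound.

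The second (``surgery'') route has a concrete quantitative flaw. Appending $O(n-m)$ unit-length excursions multiplies the path probability by a product of $O(n-m)$ coarse-grained transition probabilities, each bounded above by $1$ and below by some $c(\omega)\in(0,1)$; so the ratio $P_\omega^0(\gamma)/P_\omega^0(\varphi(\gamma))$ you need to control is of order $c(\omega)^{-O(n-m)}$. With $\theta\asymp n$ the window $\{n-\theta,\dots,n\}$ contains $m$ with $n-m\asymp n$, for which this factor is exponentially large in $n$ --- it is not ``an $\omega$-dependent constant.'' The claim that the summands are comparable across the window is therefore false exactly in the regime $\theta\asymp n$ that the conjecture targets. (For $d=4$ the extra observation that $\cmss P_\omega^{2m}(0,0)/\cmss P_\omega^{2n}(0,0)$ is bounded on $m\ge\epsilon n$ is correct but does not rescue the exponentially large surgery cost.) In short: the conjecture remains open, and neither sketch as written closes the gap.
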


\noindent
Ideas invoked in the proof of Theorem~\ref{thm:enhanced} may be handy here as they establish an explicit bound on how fast the quantity $P_\omega^0(\EE_{\omega,\alpha}^{(r)}(\theta,m)\big|X_{2m}=0)$ may oscillate with~$m$.

Another note we wish to make concerns the geometric size of the trapped regions. From Assumption~\ref{A3} we know that the largest component of~$\Z^d\setminus\scrC_{\infty,\alpha}$ that the walk can reach (and thus become trapped by) in time of order~$n$ is at most $n^{o(1)}$ in diameter. However, the strategies employed for the proofs of the lower bounds indicate that a typical trapping region may be of finite order in size, regardless of~$n$. It is an open question to prove or disprove this rigorously (for i.i.d.\ environments, to begin with).

Finally, we wish to remark that all our results extend to more general environment distributions. 
First, one may consider any ergodic distribution on $\Omega$ which stochastically dominates a product law; indeed, Assumptions~\ref{A1}-\ref{A5} still hold in this case. (This is obvious for~\ref{A2}-\ref{A3}; for~\ref{A5} one needs to note that the Benjamini-Mossel~\cite[Section~2.4]{Benjamini-Mossel} argument for the isoperimetric inequality on percolation cluster for~$p$ close to one extends to any law that dominates this percolation measure.)
Second, for i.i.d.\ conductances, we may soften the requirement that the conductances be strictly positive. In this case, we can no longer impose \eqref{eqn:A2} as Assumption~\ref{A2} and  we must instead require \eqref{eqn:alpha1} directly. 
The rest of the assumptions as well as the proofs remain almost the same, except that instead of~$\Z^d$
we need to use the (random) set of edges with positive conductances as the effective underlying graph. With this generalization, the results apply to any product law for which
$\BbbP(\omega_b > 0) > p_\cc(d)$, where~$p_\cc(d)$ is the bond-percolation threshold on $\Z^d$.

\section{Decay in four dimensions}
\label{sec3}
\noindent
In this section we prove Theorem~\ref{thm:d=4}. Although our exposition below is by and large self-contained, we welcome the reader to check Section~3.3 and Proposition~3.5 in~\cite{BBHK} which serves as a foundation for the present proof.
A key technical step underlying all derivations in \cite{BBHK} is the conditioning on the number of steps taken by the coarse-grained walk. Indeed, whenever $X_0\in\scrC_{\infty,\alpha}$, we can write $\{X_{2m}=0\}$ as the disjoint union
\begin{equation}
\{X_{2m}=0\} = \bigcup_{\ell=1}^{2m}\bigl\{\hat X_\ell=0,\,T_1+\dots+T_\ell=2m\bigr\}.
\end{equation}
Another important fact that we will use frequently is the monotonicity of the diagonal heat-kernel \eqref{E:2.5a}. With the help of these we can write
\begin{equation}
\label{E:3.1}
\begin{aligned}
\cmss P_{\omega}^{4n}(0,0) 
  & \leq  n^{-1}\sum_{m=n}^{2n}\cmss P_{\omega}^{2m}(0,0)\\
& \leq n^{-1}\sum_{\ell=1}^{4n}P_{\omega}^0(\hatX_\ell=0, \ T_1+\dots+T_\ell\geq 2n)\\
& \leq 2n^{-1} \sum_{\ell=1}^{4n}P_{\omega}^0 \Bigl(\,\hatX_\ell=0,\, \sum_{i\leq\lceil\ell/2\rceil} T_i\ge n\Bigr),
\end{aligned}
\end{equation}
where we notice that on the event when $T_1+\dots+T_\ell\ge2n$ either the sum over $i\le\lceil\ell/2\rceil$ or the sum over $\ell-\lceil\ell/2\rceil\le i\le\ell$ exceed~$n$. Reversibility then implies that the second sum has the same bound as the first one.

Next we need a version of Proposition~3.5 from \cite{BBHK} with an explicit term on the right:

\begin{lemma}
\label{lemma-3.1}
There exists a $\BbbP_\alpha$-a.s.\ finite random variable $C(\omega)$ such that
\begin{equation}
\label{E:3.2}
P_{\omega}^0 \Bigl(\,\hatX_\ell=0,\, \sum_{i\leq\lceil\ell/2\rceil} T_i\ge n\Bigr)
\le C(\omega)\,\frac{\ell^{-d/2}}n\, E_\omega^0\Bigl(\,\sum_{i\leq\lceil\ell/2\rceil} T_i\,;\,\sum_{i\leq\lceil\ell/2\rceil} T_i\ge n\Bigr).
\end{equation}
\end{lemma}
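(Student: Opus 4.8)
The statement to prove (Lemma~\ref{lemma-3.1}) decomposes the probability $P_\omega^0(\hatX_\ell=0,\ \sum_{i\le\lceil\ell/2\rceil}T_i\ge n)$ by first summing over the value of the coarse-grained displacement $\hatX_{\lceil\ell/2\rceil}=x$ reached at the midpoint, and then applying the uniform heat-kernel bound (Assumption~\ref{A5}) to the second half of the coarse-grained path. The key structural observation is that $\hatX$ is a Markov chain, so conditioning on $\hatX_{\lceil\ell/2\rceil}=x$ and using reversibility of $\hat{\cmss P}_\omega$ with respect to $\pi_\omega$ makes the ``return to $0$'' factor small precisely because the second half of the walk must travel from a far-away $x$ back to $0$.

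\begin{proofsect}{Sketch}
Write $m:=\lceil\ell/2\rceil$. By the Markov property of $\hatX$ at time $m$,
\begin{equation}
\label{eqn:plan-1}
P_\omega^0\Bigl(\hatX_\ell=0,\ \sum_{i\le m}T_i\ge n\Bigr)
=\sum_{x\in\scrC_{\infty,\alpha}}\hat{\cmss P}_\omega^{\,\ell-m}(x,0)\,
E_\omega^0\Bigl(\Bbbone_{\{\hatX_m=x\}}\Bbbone_{\{\sum_{i\le m}T_i\ge n\}}\Bigr),
\end{equation}
where we have used that the trapping times $T_1,\dots,T_m$ are measurable with respect to the first $m$ steps of $X$ (equivalently, of $\hatX$) and so the indicator of $\{\sum_{i\le m}T_i\ge n\}$ only interacts with the first half. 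The sum over $x$ is effectively finite: on $\{\hatX_m=x\}$ we have $\textd_\omega(0,x)\le m\le\ell$, so in \eqref{eqn:plan-1} only $x$ with $\textd_\omega(0,x)\le\ell$ contribute, and by reversibility
$\hat{\cmss P}_\omega^{\,\ell-m}(x,0)=\tfrac{\pi_\omega(0)}{\pi_\omega(x)}\hat{\cmss P}_\omega^{\,\ell-m}(0,x)$.
Now apply Assumption~\ref{A5} (in the reversed form noted right after it, with $\rho=1$ say, absorbing the $2d/\alpha$ into the constant), which gives
$\hat{\cmss P}_\omega^{\,\ell-m}(x,0)\le C(\omega)(\ell-m)^{-d/2}\le C'(\omega)\ell^{-d/2}$
uniformly over all such $x$, since $\ell-m=\lfloor\ell/2\rfloor\ge\ell/3$. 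Pulling this uniform bound out of the sum in \eqref{eqn:plan-1} and summing the remaining expectations over all $x$ collapses the $x$-sum back to a single expectation:
\begin{equation}
\label{eqn:plan-2}
P_\omega^0\Bigl(\hatX_\ell=0,\ \sum_{i\le m}T_i\ge n\Bigr)
\le C'(\omega)\,\ell^{-d/2}\,P_\omega^0\Bigl(\sum_{i\le m}T_i\ge n\Bigr).
\end{equation}
Finally, bound the probability on the right by the truncated first moment via Markov's inequality in the form $P(Y\ge n)\le n^{-1}E(Y;\,Y\ge n)$ with $Y:=\sum_{i\le m}T_i$, which yields exactly \eqref{E:3.2}.
\end{proofsect}

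\textbf{Main obstacle.} The one point that requires care is the measurability/independence-of-halves claim used to split \eqref{eqn:plan-1}: one must check that $\{\sum_{i\le\lceil\ell/2\rceil}T_i\ge n\}$ is indeed a function of $(\hatX_0,\dots,\hatX_{\lceil\ell/2\rceil})$ together with the excursions of $X$ between consecutive visits to $\scrC_{\infty,\alpha}$ up to the $\lceil\ell/2\rceil$-th visit, so that after conditioning on $\hatX_{\lceil\ell/2\rceil}=x$ the future increments $\hatX_{\lceil\ell/2\rceil+1},\dots,\hatX_\ell$ are governed by $\hat{\cmss P}_\omega$ started afresh at $x$ — this is the strong Markov property for $X$ at the (a.s.\ finite) hitting times $T_0+\dots+T_k$, which is standard but should be invoked explicitly. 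The only other thing to watch is that Assumption~\ref{A5} is stated with a restriction $\textd_\omega(0,x)\le\rho n$ on the \emph{starting} point and a time parameter equal to $n$; here the time parameter is $\ell-m\asymp\ell$ and the constraint $\textd_\omega(0,x)\le\ell$ matches with $\rho=1$ (after using the reversed version), so no issue arises, but one should confirm the constants $C(\omega)$ are uniform in the relevant range. Everything else is the routine chain of \eqref{eqn:plan-1}–\eqref{eqn:plan-2} plus Markov's inequality.
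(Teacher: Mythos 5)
Your proof takes essentially the same route as the paper: condition on $\hatX_{\lceil\ell/2\rceil}$, reverse the transition kernel via $\pi_\omega$, apply Assumption~\ref{A5} uniformly over the relevant $x$, and finish with Chebyshev/Markov on $\sum_{i\le\lceil\ell/2\rceil}T_i$. One cosmetic slip: $\rho=1$ in Assumption~\ref{A5} does not quite suffice, since the starting point satisfies $\textd_\omega(0,x)\le\lceil\ell/2\rceil$ while the time parameter is $\lfloor\ell/2\rfloor$, so you should take $\rho\ge2$ (for odd $\ell$ the ratio $\lceil\ell/2\rceil/\lfloor\ell/2\rfloor$ can equal $2$ when $\ell=3$), but this is trivially fixed and does not affect the argument.
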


\begin{proofsect}{Proof}
By conditioning on $\hat X_{\lceil\ell/2\rceil}$ we get
\begin{equation}
P_{\omega}^0 \Bigl(\,\hatX_\ell=0,\, \sum_{i\leq\lceil\ell/2\rceil} T_i\ge n\Bigr)
=\sum_{x \in \scrC_{\infty,\alpha}} P_{\omega}^0 \Bigl(\,\hat X_{\lceil\ell/2\rceil}=x,\, \sum_{i\leq\lceil\ell/2\rceil} T_i\ge n\Bigr)\hat{\cmss P}^{\ell-{\lceil\ell/2\rceil}}(x,0).
\end{equation}
Reversibility and Assumption~\ref{A5} then tell us that 
\begin{equation}
 \hat{\cmss P}^{\ell-{\lceil\ell/2\rceil}}(x,0) = \frac{\pi_\omega(0)}{\pi_\omega(x)}\hat{\cmss P}^{\ell-{\lceil\ell/2\rceil}}(0,x)
  \leq \frac{2d}{\alpha} \ell^{-d/2} \,,
\end{equation}
uniformly for all~$x$. The desired bound then follows by summation over~$x$ and an application of Chebyshev's inequality.
\end{proofsect}

For what follows, we need to recall the notion of the ``point of the view of the particle''. Given $\omega$ and a sample~$X$ of the random walk, the sequence $(\tau_{X_n}\omega)$ represents the environments seen from the position of the random walk. As it turns out, this is a Markov chain on~$\Omega$ with a reversible, stationary measure 
\begin{equation}
\Q(-):=\frac1Z\pi_\omega(0)\BbbP(-),\quad\text{where}\quad Z:=\E \pi_\omega(0).
\end{equation}
As is well known (see, e.g.,~\cite[Section~2.1]{Biskup-review}) the chain started from this measure is ergodic.

A similar construction can be carried through also for the chain~$(\hat X, T)$. The stationary distribution for the sequence $(\tau_{\hat X_n}\omega)$ is now given by
\begin{equation}
\Q_\alpha(-):=\Q(-|0\in\scrC_{\infty,\alpha}).
\end{equation}
which is again stationary and reversible. Interpreting the chain using an induced shift on the space of trajectories (see, e.g., \cite[Lemma~3.3]{Berger-Biskup}), starting from~$\Q_\alpha$, the process 
$(\tau_{\hat X_n}\omega,\; T_n)_{n \geq 1}$ is stationary and ergodic. In addition $\Q_\alpha \sim \BbbP_\alpha$, i.e., $\Q_\alpha$ is equivalent to~$\BbbP_\alpha$, for every $\alpha\in(0,\alpha_0)$.
\begin{lemma}
\label{lemma-Z's}
Abbreviate
\begin{equation}
Z_\ell:=\frac1\ell\sum_{j=1}^{\ell} T_j,\qquad \ell\ge1.
\end{equation}
Then for~$\BbbP_\alpha$-almost every $\omega$ we have
\begin{equation}
\label{eqn:3.9}
Z_\ell\,\underset{\ell\to\infty}\longrightarrow\, Z_\infty:=\E_{\Q_\alpha} E_\omega^0 T_1,
\end{equation}
$P_\omega^0$-almost surely and in $L^1(P_\omega^0)$.
\end{lemma}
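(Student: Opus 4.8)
The plan is to identify $(T_n)_{n\ge1}$, under the annealed law, as a stationary ergodic sequence and invoke Birkhoff's theorem; the only steps beyond bookkeeping are the finiteness of~$Z_\infty$ and the upgrade of the annealed $L^1$ convergence to a quenched one. I begin with finiteness. The coarse-grained environment chain $(\tau_{\hat X_n}\omega)_{n\ge0}$ is exactly the chain obtained by sampling the environment chain $(\tau_{X_n}\omega)_{n\ge0}$ — which carries the ergodic stationary law~$\Q$ — only at the visits to the set $A:=\{\omega\colon 0\in\scrC_{\infty,\alpha}\}$, since $X_n\in\scrC_{\infty,\alpha}(\omega)$ if and only if $\tau_{X_n}\omega\in A$; moreover the first return time of this chain to~$A$, when $X_0=0\in\scrC_{\infty,\alpha}$, equals~$T_1$. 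Here $\Q(A)=Z^{-1}\E(\pi_\omega(0)\Bbbone_A)$ is strictly positive (because $\alpha<\alpha_1$ forces $\BbbP(0\in\scrC_{\infty,\alpha})>0$, and $\pi_\omega(0)>0$ $\BbbP$-a.s.) and finite (since $\pi_\omega(0)\le2d$), so Kac's recurrence formula gives
\begin{equation}
\label{eqn:Kac}
Z_\infty=\E_{\Q_\alpha}E_\omega^0T_1=\frac{1}{\Q(A)}=\frac{\E\pi_\omega(0)}{\E\bigl(\pi_\omega(0)\Bbbone_{\{0\in\scrC_{\infty,\alpha}\}}\bigr)}\in(0,\infty).
\end{equation}
(Alternatively, one may cite that this moment bound is contained in Biskup and Prescott~\cite{Biskup-Prescott}.)

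For the $P_\omega^0$-almost sure convergence, let $\bar\Q_\alpha$ be the annealed measure under which $\omega$ has law~$\Q_\alpha$ and the path is run from~$0$. As recorded above, under~$\bar\Q_\alpha$ the process $(\tau_{\hat X_n}\omega,T_n)_{n\ge1}$ is stationary and ergodic, hence so is the real-valued sequence $(T_n)_{n\ge1}$, and $T_1\in L^1(\bar\Q_\alpha)$ with mean~$Z_\infty$ by~\eqref{eqn:Kac}. Birkhoff's pointwise ergodic theorem yields $Z_\ell\to Z_\infty$, $\bar\Q_\alpha$-a.s.; disintegrating over~$\omega$ gives, for $\Q_\alpha$-a.e.~$\omega$, that $Z_\ell\to Z_\infty$ holds $P_\omega^0$-a.s., and since $\Q_\alpha\sim\BbbP_\alpha$ the same is true for $\BbbP_\alpha$-a.e.~$\omega$.

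It remains to obtain $L^1(P_\omega^0)$ convergence for $\BbbP_\alpha$-a.e.\ \emph{fixed}~$\omega$. Since $Z_\ell,Z_\infty\ge0$ and $Z_\ell\to Z_\infty$ $P_\omega^0$-a.s., Scheff\'e's lemma reduces this to showing $E_\omega^0Z_\ell\to Z_\infty$. Write $g(\omega):=E_\omega^0T_1$ and let $\hat{\cmss R}$ denote the transition operator of the coarse-grained environment chain $(\tau_{\hat X_n}\omega)$. A one-step conditioning on~$\hat X_{j-1}$ together with the strong Markov property at the $j$-th visit time give $E_\omega^0T_j=(\hat{\cmss R}^{\,j-1}g)(\omega)$, whence $E_\omega^0Z_\ell=\frac1\ell\sum_{j=0}^{\ell-1}(\hat{\cmss R}^{\,j}g)(\omega)$. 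Since $\hat{\cmss R}$ is a Markov operator preserving the ergodic probability measure~$\Q_\alpha$ and $g\in L^1(\Q_\alpha)$ with $\E_{\Q_\alpha}g=Z_\infty$ by~\eqref{eqn:Kac}, the pointwise ergodic theorem for Markov operators gives $\frac1\ell\sum_{j=0}^{\ell-1}\hat{\cmss R}^{\,j}g\to Z_\infty$, $\Q_\alpha$-a.s., hence $\BbbP_\alpha$-a.s. This is the desired convergence of means, and Scheff\'e's lemma completes the proof.

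The main obstacle is the finiteness in~\eqref{eqn:Kac}. It cannot be obtained by a pathwise bound on~$E_\omega^0T_1$: escaping a trap requires crossing weak boundary edges whose conductances may be arbitrarily small, so~$E_\omega^0T_1$ admits no uniform bound and need not lie in any~$L^p(\BbbP_\alpha)$ with $p>1$, despite Assumption~\ref{A3} controlling~$|\GG_0|$; the global Kac identity circumvents this. A secondary subtlety is that Birkhoff's theorem delivers $L^1$ convergence only under the annealed measure, which is why the quenched $L^1$ statement must be routed through the separate ergodic-theorem argument for the operator~$\hat{\cmss R}$ rather than a uniform-integrability bound on~$(Z_\ell)_{\ell\ge1}$, for which no $L\log L$ control is available.
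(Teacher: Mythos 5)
Your proof is correct and takes a genuinely different route from the paper's at both of the non-trivial steps. For the finiteness of $Z_\infty$, the paper invokes the pathwise bound $E_\omega^x T_1 \le c_1|\GG_x|$ ([BBHK, Lemma~3.8]) and then appeals to Assumption~\ref{A3}; you instead use the global Kac recurrence identity $\E_{\Q_\alpha}E_\omega^0 T_1 = 1/\Q(A)$, which requires only $\Q(A)>0$ and avoids both [BBHK] and Assumption~\ref{A3} entirely. For the quenched $L^1$ convergence, the paper also reduces to $E_\omega^0 Z_\ell\to Z_\infty$ (without naming Scheff\'e), but then builds a dominating variable $W^\star=\sup_\ell\frac1\ell\sum_{j<\ell}|\GG_{\hat X_j}|$ and combines Wiener's Dominated Ergodic Theorem with Fubini; you instead note that $E_\omega^0 Z_\ell=\frac1\ell\sum_{j=0}^{\ell-1}(\hat{\cmss R}^j g)(\omega)$ with $g:=E_\omega^0 T_1\in L^1(\Q_\alpha)$ and cite the Hopf--Dunford--Schwartz pointwise ergodic theorem for the $\Q_\alpha$-preserving Markov operator $\hat{\cmss R}$. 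Both give the same conclusion, but your version is structurally simpler and independent of Assumption~\ref{A3}; the paper's argument buys the explicit bound \eqref{E:3.12}, which it re-uses downstream (Lemma~\ref{lemma-3.6} and the proof of Theorem~\ref{thm:d=4}), so it is not wasted effort there.

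Your closing remark, however, is wrong: it is not true that $E_\omega^0 T_1$ ``admits no uniform bound and need not lie in any $L^p(\BbbP_\alpha)$ with $p>1$.'' The heuristic misses a cancellation enforced by reversibility: the chance of entering a weak component from $\scrC_{\infty,\alpha}$ is proportional to the small boundary conductance, and this exactly offsets the long expected excursion once inside. The resulting deterministic bound $E_\omega^x T_1\le c_1|\GG_x|$ (for $x\in\scrC_{\infty,\alpha}$), together with Assumption~\ref{A3}, places $E_\omega^0 T_1$ in every $L^p(\BbbP_\alpha)$; this is precisely what the paper uses. Your Kac argument remains perfectly valid, but its claimed necessity does not.
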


\begin{proofsect}{Proof}
We shall prove the almost-sure and $L^1$ convergence in \eqref{eqn:3.9} for $\Q_\alpha$-almost every $\omega$. Since $\Q_\alpha \sim \BbbP_\alpha$, this will be enough.
Consider therefore the joint stationary measure $\mu:=\Q_\alpha\otimes P_\omega^0$ on the space of environments and paths of the random walk. By \cite[Lemma~3.8]{BBHK} we have
\begin{equation}
\label{eqn:Lemma38}
  E_{\omega}^xT_1 \leq c_1|\GG_x|,	\qquad x \in \scrC_{\infty,\alpha}(\omega),
\end{equation}
for some~$c_1=c_1(d,\alpha)\in(0,\infty)$. In particular, $\E_{\Q_\alpha} E_\omega^0 T_1 <\infty$ because the component sizes have all moments by Assumption~\ref{A3}. The ergodicity of the Markov chain on the space of environments then tell us that $Z_\ell\to Z_\infty$, $\mu$-a.s.\ and in $L^1(\mu)$. However, this is not enough to prove convergence in $L^1(P_\omega^0)$ because almost sure and~$L^1$~convergence do not generally guarantee convergence of conditional expectations.

We thus proceed by a more explicit argument. Since $Z_\ell\to Z_\infty$ almost surely with respect to~$\mu$, and thus also with respect to~$P_\omega^0$, for~$\BbbP_\alpha$-a.e.~$\omega$, in order to infer $L^1(P_\omega^0)$-convergence, it suffices to show the convergence of the norms, i.e.,
\begin{equation}
E_\omega^0 Z_\ell \,\underset{\ell\to\infty}\longrightarrow\, Z_\infty,\qquad \BbbP_\alpha\text{-a.s.}
\end{equation}
By the Markov property and additivity of expectations, this is equivalent to proving this for the corresponding expectation of the sequence of random variables
\begin{equation}
Y_\ell:=\frac1\ell\sum_{j=0}^{\ell-1} E_{\tau_{\hat{X}_j}\omega}^0(T_1).
\end{equation}
Indeed, $E_\omega^0 Z_\ell=E_\omega^0 Y_\ell$ for each $\ell\ge1$ and $Y_\ell\to Z_\infty$, $\mu$-a.s.

We will show $E_\omega^0Y_\ell\to Z_\infty$ by invoking the Dominated Convergence Theorem, but for that end we need to exhibit a dominating random variable that lies in $L^1(P^0_\omega)$, for $\BbbP_\alpha$-a.e.~$\omega$. Define
\begin{equation}
W_\ell:=\frac1\ell\sum_{j=0}^{\ell-1} |\GG_{\hatX_j}|,\qquad \ell\ge1,
\end{equation}
and set
\begin{equation}
W^\star:=\sup_{\ell\ge1}W_\ell.
\end{equation}
From \eqref{eqn:Lemma38} we observe that $(0\le)Y_\ell\le c_1 W_\ell\le c_1 W^\star$ and so $W^\star$ can indeed be used to dominate the $Y_\ell$'s. We thus need to prove
\begin{equation}
\label{E:3.11}
W^\star\in L^1(P_\omega^0),\qquad \BbbP_\alpha\text{-a.s.}
\end{equation} 
By Assumption~\ref{A3}, $|\GG_0|\in L^p(\mu)$ for all $p\ge1$. Wiener's Dominated Ergodic Theorem (cf~Petersen~\cite[Theorem~1.16]{Petersen}) then implies $W^\star\in L^p(\mu)$ for all $p\ge1$ as well. From here \eqref{E:3.11} follows via Fubini's Theorem.
\end{proofsect}

\begin{remark}
In the above proof we used the following sequence of estimates:
\begin{equation}
\label{E:3.12}
E_\omega^0\Bigl(\,\sum_{j=1}^{\ell}T_j\Bigr)= E_\omega^0\Bigl(\,\sum_{j=0}^{\ell-1}E_{\tau_{\hat X_j}\omega}^0(T_1)\Bigr)
\le c_1E_\omega^0\Bigl(\,\sum_{j=0}^{\ell-1}|\GG_{\hat X_j}|\Bigr)\le C(\omega)\ell,\qquad\ell\ge1,
\end{equation}
for some $\BbbP_\alpha$-a.s.\ finite random variable $C=C(\omega)$. This bound was invoked in~\cite[Eq.~(3.45)]{BBHK}, but without a reference to the Dominated Ergodic Theorem for the proof of the last step. It appears that one needs more than just plain integrability of $|\GG_0|$ for the last inequality to hold.
\end{remark}

Now we are ready to establish the upper bound on the four-dimensional heat kernel:

\begin{proof}[Proof of Theorem~\ref{thm:d=4}]
We now claim that, for any $M> \E_{\Q_\alpha} E_\omega^0 T_1$,
\begin{equation}
\label{E:3.8}
\lim_{n\to\infty}\,\max_{1\le\ell\le n/M}\,\,
\frac1\ell\, E_\omega^0\Bigl(\,\,\sum_{i\leq\ell} T_i\,;\,\sum_{i\leq\ell} T_i\ge n\Bigr)=0,\qquad \BbbP\text{-a.s.}
\end{equation}
To show this, use the bound $\ffrac n\ell\ge M$ to derive
\begin{equation}
\begin{aligned}
\frac1\ell \,E_\omega^0\Bigl(\,\,\sum_{i\leq\ell} T_i\,;\,\sum_{i\leq\ell} T_i\ge n\Bigr)
&=E_\omega^0\bigl(Z_{\ell}\,;\, Z_{\ell}\ge\ffrac n\ell\bigr)
\\
&\le E_\omega^0|Z_{\ell}-Z_\infty|+E_\omega^0\bigl(Z_\infty;Z_{\ell}\ge M\bigr).
\end{aligned}
\end{equation}
By Lemma~\ref{lemma-Z's} and the choice of~$M$ both terms on the right tend to zero as $\ell\to\infty$, so given $\epsilon>0$ we can find $\ell_0$ so that the left-hand side is less than $\epsilon$ for all $\ell$ with $\ell_0\le\ell\le n/M$. But for $\ell\le\ell_0$ the limit of the left-hand side as $n\to\infty$ is zero by the fact that the expectation of $\sum_{i=1}^\ell T_i$ is finite.

In order to prove the claim in the theorem, set $M>\E_{\Q_\alpha} E_\omega^0 T_1$, recall \eqref{E:3.1} and split the last sum in this formula according to whether $n/\ell>M$ or not. Fix~$\epsilon>0$ and let $n_0$ be so large that, for all $n\ge n_0$ and all $1\le\ell\le n/M$, the expectation on the right of \eqref{E:3.2} is less than~$\epsilon\ell$ (this is possible by \eqref{E:3.8}). For the complementary set of~$(n,\ell)$ pairs we use instead that
\begin{equation}
E_\omega^0\Bigl(\,\sum_{i\leq\lceil\ell/2\rceil} T_i\,;\,\sum_{i\leq\lceil\ell/2\rceil} T_i\ge n\Bigr)\le C(\omega)\ell,
\end{equation}
as is implied by \eqref{E:3.12}. Putting this together, we get for $n\ge n_0\vee M\ell_0$,
\begin{equation}
\label{E:3.19}
\cmss P_\omega^{4n}(0,0)\le C(\omega)n^{-1}\biggl(\,\sum_{n/M\le\ell\le n}\frac{\ell^{1-d/2}}n+
\epsilon\sum_{\ell=1}^{n/M}\frac{\ell^{1-d/2}}n\biggr).
\end{equation}
The first term in the parentheses on the right is bounded by $n^{-1}\log M$, once~$M$ is sufficiently large, while the second term is at most $\epsilon n^{-1}\log n$. As~$\epsilon$ was arbitrarily small and
as monotonicity implies that \eqref{E:3.19} holds for $\cmss P_\omega^{4n+2}(0,0)$ as well, the claim
follows.
\end{proof}

\begin{remark}
Note that in $d\ge5$, the first sum on the right-hand side of \eqref{E:3.19} is order $M^{2-d/2}n^{-1}$ while the second sum is order $n^{-1}$. This gives another proof of \eqref{E:1.7}; this time allowing for an extension to $d=4$.
\end{remark}

\section{Trapping under subdiffusive decay}
\noindent
Here we will establish the trapping scenario as stated in Theorem~\ref{thm:trapping}. The main technical obstacle for us is that Lemma~\ref{lemma-3.1} gives a good estimate for the sum over $T_i$ \emph{exceeding} $n$, rather than the event that the sum is \emph{equal} to~$n$. This necessitates that in many calculations we sum over a range of~$n$'s which then invariably leads to results for an~$n^\star$ in this range, rather than~$n$ itself. In what follows we will consider only $d\ge4$ and fix $\alpha \in (0, \alpha_0)$, where $\alpha_0$ is as in Assumption~\ref{A5}. Consequently, the statements and in particular the random constants $C(\omega)$ in the expressions below depend on~$\alpha$, but to avoid clutter, we shall not
reflect this in the notation.

Our starting point is the following observation: Should the walk spend a majority of its time in a small number of weak components, the total number of coarse-grained steps must satisfy $\ell_n=o(n)$. 
A  quantitative form of this is:

\begin{proposition}
\label{prop-3.4}
There is a $\BbbP_\alpha$-a.s.\ finite random variable~$C=C(\omega)$ such that 
for all $n\ge1$ and all $\theta$ and $\Delta$ with $1\le\theta,\Delta\le\ffrac n2$,
\begin{equation}
\frac1{\Delta+1}\sum_{m=n-\Delta}^n P_\omega^0\bigl(\ell_m\ge\theta,\,X_{2m}=0\bigr)
\le\frac{C(\omega)}{n^{d/2}}\Bigl(\frac n\Delta\Bigr)
\begin{cases}
\log(\ffrac n\theta),\qquad&\text{if }d=4,
\\*[2mm]
\bigl(\tfrac n\theta\bigr)^{\ffrac d2-2},\qquad&\text{if }d\ge5.
\end{cases}
\end{equation}
\end{proposition}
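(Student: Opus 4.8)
The plan is to re-run the reduction that leads to \eqref{E:3.1}, but this time retaining the constraint $\ell_m\ge\theta$, and then to feed the outcome into Lemma~\ref{lemma-3.1} together with the crude first-moment bound \eqref{E:3.12}.

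First I would unfold the event combinatorially. Working on the event $0\in\scrC_{\infty,\alpha}(\omega)$ (which has full $\BbbP_\alpha$-measure), the decomposition recalled at the start of Section~\ref{sec3} shows that on $\{X_{2m}=0\}$ the coarse-grained walk has performed exactly $\ell_m$ steps, with $\hatX_{\ell_m}=0$ and $T_1+\dots+T_{\ell_m}=2m$; hence $\{\ell_m\ge\theta,\,X_{2m}=0\}=\bigcup_{\ell\ge\theta}\{\hatX_\ell=0,\,T_1+\dots+T_\ell=2m\}$, a union that is disjoint and automatically restricted to $\ell\le2m$ since $T_j\ge1$. Summing over $m\in\{n-\Delta,\dots,n\}$ and exchanging the two sums collapses the $m$-sum (for fixed $\ell$ the events are disjoint in $m$), leaving $\sum_{\theta\le\ell\le2n}P_\omega^0(\hatX_\ell=0,\,2(n-\Delta)\le T_1+\dots+T_\ell\le2n)$. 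I then drop the upper constraint and, exactly as in \eqref{E:3.1}, use that on $\{T_1+\dots+T_\ell\ge2(n-\Delta)\}$ at least one of the halves $\sum_{i\le\lceil\ell/2\rceil}T_i$, $\sum_{\lceil\ell/2\rceil<i\le\ell}T_i$ is $\ge n-\Delta$, together with reversibility of $\hat{\cmss P}_\omega$, to bound the whole expression by $2\sum_{\theta\le\ell\le2n}P_\omega^0(\hatX_\ell=0,\,\sum_{i\le\lceil\ell/2\rceil}T_i\ge n-\Delta)$.

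Next I would bring in the quantitative estimates. Since $\Delta\le n/2$ we have $n-\Delta\ge n/2$, so Lemma~\ref{lemma-3.1} (with $n$ there replaced by $n-\Delta$) bounds the $\ell$-th term by $C(\omega)\,\ell^{-d/2}(n-\Delta)^{-1}E_\omega^0(\sum_{i\le\lceil\ell/2\rceil}T_i;\,\cdots)$; discarding the truncation and invoking \eqref{E:3.12} replaces the expectation by $C(\omega)\lceil\ell/2\rceil\le C(\omega)\ell$. Collecting terms, the left-hand side of the Proposition is at most $C(\omega)\bigl((\Delta+1)(n-\Delta)\bigr)^{-1}\sum_{\theta\le\ell\le2n}\ell^{1-d/2}\le C(\omega)(\Delta n)^{-1}\sum_{\theta\le\ell\le2n}\ell^{1-d/2}$. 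The last sum is $O(\log(n/\theta))$ when $d=4$ (and $\log(n/\theta)$ is bounded below because $\theta\le n/2$), while for $d\ge5$ the series converges and its tail from $\theta$ is $O(\theta^{2-d/2})$. Rewriting $\theta^{2-d/2}=n^{2-d/2}(n/\theta)^{d/2-2}$ and $(\Delta n)^{-1}=n^{-d/2}(n/\Delta)\,n^{d/2-2}$ (and similarly with the logarithmic factor when $d=4$) turns this into exactly the asserted right-hand side, after absorbing absolute constants into $C(\omega)$.

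I do not anticipate a genuine obstacle: the coarse-graining decomposition, the reversibility ``folding'' of \eqref{E:3.1}, Lemma~\ref{lemma-3.1} and \eqref{E:3.12} are all already available, so the proof is essentially an exercise in book-keeping. The two points that need a little care are the collapse of the double sum together with the reversibility split (both of which simply reproduce the manipulations in \eqref{E:3.1}), and the elementary case analysis of $\sum_\ell\ell^{1-d/2}$ --- logarithmic for $d=4$, a summable power for $d\ge5$ --- which is precisely what produces the two cases on the right-hand side of the claimed bound.
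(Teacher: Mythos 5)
Your proof is correct and follows essentially the same route as the paper: decompose on the number of coarse-grained steps, collapse the sum over $m$, apply Lemma~\ref{lemma-3.1} together with the linear bound \eqref{E:3.12}, and sum the resulting $\ell^{1-d/2}$ tail. The only difference is that you spell out the ``half-sum'' reversibility reduction (as in \eqref{E:3.1}) before invoking Lemma~\ref{lemma-3.1}, a step the paper leaves implicit; the bookkeeping with $n-\Delta\ge n/2$ and the case split on $d$ are identical.
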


\begin{proofsect}{Proof}
For $m\le n$ we have
\begin{equation}
P_\omega^0\bigl(\ell_m\ge\theta,\,X_{2m}=0\bigr)
=\sum_{\ell=\theta}^{2n}P_\omega^0\biggl(\,\sum_{i=1}^\ell T_i=2m,\,\hat X_\ell=0\biggr).
\end{equation}
Summing over the given range of~$m$'s and noting that $n-\Delta\ge\ffrac n2$ yields
\begin{equation}
\sum_{m=n-\Delta}^nP_\omega^0\bigl(\ell_m\ge\theta,\,X_{2m}=0\bigr)
\le\sum_{\ell=\theta}^{2n}P_\omega^0\biggl(\,\sum_{i=1}^\ell T_i\ge n,\,\hat X_\ell=0\biggr).
\end{equation}
The $\ell$-th term on the right hand side can be estimated using Lemma~\ref{lemma-3.1} and \eqref{E:3.12} to be less than $C(\omega) n^{-1}\ell^{1-d/2}$. The claim now follows by summation over~$\ell$.
\end{proofsect}

Our next task is to estimate the probability of the event that the collection of times $(T_1,\dots,T_{\ell_m})$ does not have the property that removal of~$r$ of them makes the sum small. Explicitly:

\begin{proposition}
\label{prop-3.5}
Suppose $d\ge4$ and $r \ge \lfloor \tfrac d2-1 \rfloor$. Recall the definition of $G_{\,\ell,r}(\theta)$ from \eqref{E:2.19d}. For $\BbbP_\alpha$-a.e. $\omega$  there is 
$C(\omega)<\infty$ such that for all $n\ge1$ and all $\theta$ and~$\Delta$ with $1\le\Delta,\theta\le\ffrac n2$,
\begin{equation}
\label{E:3.21}
\frac1{\Delta+1}\,\sum_{m=n-\Delta}^n P_\omega^0\Bigl(\ell_m\le\theta,\, X_{2m}=0,\,(T_1,\dots,T_{\ell_m})\not\in G_{\,\ell_m,r}(\theta)\Bigr)
\le\frac{C(\omega)}{n^{d/2}}\Bigl(\frac n\Delta\Bigr)\Bigl(\frac n\theta\Bigr)^{d/2-2}.
\end{equation}
\end{proposition}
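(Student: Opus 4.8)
The plan is to mimic the proof of Proposition~\ref{prop-3.4}: reduce to a bound with a fixed target sum, and then exploit the constraint $(T_1,\dots,T_{\ell_m})\notin G_{\ell_m,r}(\theta)$ through the cheap inequality $\mathbf 1_{\{(T_1,\dots,T_\ell)\notin G_{\ell,r}(\theta)\}}\le\bigl(\theta^{-1}\sum_{i\le\ell}T_i\bigr)^{r}$. For the reduction, note that on $\{X_{2m}=0\}$ (which, under $\BbbP_\alpha$, forces $0\in\scrC_{\infty,\alpha}$) one has $\hat X_{\ell_m}=0$ and $\sum_{i\le\ell_m}T_i=2m$; since for a fixed $\ell$ the events $\{\hat X_\ell=0,\ \sum_{i\le\ell}T_i=2m\}$ are disjoint in $m$, and $2(n-\Delta)\ge n$ because $\Delta\le\tfrac n2$, the left side of \eqref{E:3.21} is at most $(\Delta+1)^{-1}\Sigma$, where
\[
\Sigma\ :=\ \sum_{\ell=r+1}^{\theta}P_\omega^0\Bigl(\hat X_\ell=0,\ \textstyle\sum_{i=1}^\ell T_i\ge n,\ (T_1,\dots,T_\ell)\notin G_{\ell,r}(\theta)\Bigr)
\]
(the index starts at $r+1$ since $G_{\ell,r}(\theta)=\N^\ell$ for $\ell=r$, while for $\ell<r$ the contribution is at most $C(\omega)\mathrm e^{-cn}$ by the exponential tail of the finite-component excursion time $T_1$). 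As $\Delta+1\ge\Delta$, it suffices to prove $\Sigma\le C(\omega)\,n^{1-d/2}(n/\theta)^{d/2-2}$.

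For the per-$\ell$ estimate, observe that the infimum in \eqref{E:2.19d} over $r$-subsets is attained at the indices of the $r$ largest $T_i$, so $(T_1,\dots,T_\ell)\notin G_{\ell,r}(\theta)$ forces $\sum_{i\le\ell}T_i>\theta$ and hence $\mathbf 1_{\{(T_1,\dots,T_\ell)\notin G_{\ell,r}(\theta)\}}\le(\theta^{-1}\sum_iT_i)^r$; together with $\mathbf 1_{\{\sum_iT_i\ge n\}}\le n^{-1}\sum_iT_i$ this gives
\[
P_\omega^0\Bigl(\hat X_\ell=0,\ \textstyle\sum_iT_i\ge n,\ (T_1,\dots,T_\ell)\notin G_{\ell,r}(\theta)\Bigr)\ \le\ \frac1{n\,\theta^r}\,E_\omega^0\Bigl(\mathbf 1_{\{\hat X_\ell=0\}}\bigl(\textstyle\sum_{i=1}^\ell T_i\bigr)^{r+1}\Bigr),
\]
so it remains to show $E_\omega^0\bigl(\mathbf 1_{\{\hat X_\ell=0\}}(\sum_{i\le\ell}T_i)^{r+1}\bigr)\le C(\omega)\,\ell^{\,r+1-d/2}$. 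Writing $\sum_{i\le\ell}T_i=S_1+S_2$ with $S_1:=\sum_{i\le\lceil\ell/2\rceil}T_i$, one has $(S_1+S_2)^{r+1}\le2^r(S_1^{r+1}+S_2^{r+1})$. For the $S_1$-part, condition on $\hat X_{\lceil\ell/2\rceil}$ and use reversibility, $\hat{\cmss P}_\omega^{\lfloor\ell/2\rfloor}(x,0)=\tfrac{\pi_\omega(0)}{\pi_\omega(x)}\hat{\cmss P}_\omega^{\lfloor\ell/2\rfloor}(0,x)\le\tfrac{2d}{\alpha}\hat{\cmss P}_\omega^{\lfloor\ell/2\rfloor}(0,x)\le\tfrac{2d}{\alpha}C(\omega)\lfloor\ell/2\rfloor^{-d/2}$ --- a bound anchored at the origin, hence uniform in $x$; summing $\hat X_{\lceil\ell/2\rceil}$ out leaves $C(\omega)\ell^{-d/2}E_\omega^0(S_1^{r+1})$. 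The $S_2$-part is treated identically once one notes that the $0\to0$ bridge is reversible (the $\pi_\omega$-factors telescope), so that $S_2$ is distributed as an "$S_1$ read from the other end". Finally $E_\omega^0(S_1^{r+1})\le C_{r+1}(\omega)\lceil\ell/2\rceil^{\,r+1}$, the higher-moment analogue of \eqref{E:3.12}, which follows from \eqref{eqn:Lemma38} --- and its higher-moment version, with a power of $|\GG_x|$ on the right --- by the Dominated-Ergodic-Theorem argument used in the proof of Lemma~\ref{lemma-Z's}, bootstrapped in the exponent. This gives the per-$\ell$ bound; summing over $r+1\le\ell\le\theta$ and using $r\ge\lfloor\tfrac d2-1\rfloor$ (so $r+1-\tfrac d2>-1$ and $\sum_\ell\ell^{\,r+1-d/2}\le C_d\theta^{\,r+2-d/2}$) yields $\Sigma\le C(\omega)C_d\,n^{-1}\theta^{\,2-d/2}=C(\omega)C_d\,n^{1-d/2}(n/\theta)^{d/2-2}$, as needed.

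The substantive step is the bound $E_\omega^0\bigl(\mathbf 1_{\{\hat X_\ell=0\}}(\sum_{i\le\ell}T_i)^{r+1}\bigr)\le C(\omega)\ell^{r+1-d/2}$: one must squeeze the full heat-kernel decay $\ell^{-d/2}$ out of the return event while controlling the $(r+1)$-st moment of $\sum_iT_i$ separately, and the care is in (a) keeping the heat-kernel estimate anchored at the origin (via reversibility) so its random constant remains $\BbbP_\alpha$-a.s.\ finite, (b) using reversibility of the $0\to0$ bridge to reduce the "second half" of $\sum_iT_i$ to a "first half", and (c) establishing $E_\omega^0\bigl((\sum_{i\le m}T_i)^p\bigr)\le C_p(\omega)\,m^p$ for all $p$, for which --- as the remark after Lemma~\ref{lemma-Z's} already notes for $p=1$ --- mere integrability of $|\GG_0|$ is not enough.
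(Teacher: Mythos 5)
Your overall strategy parallels the paper's: reduce the sum over $m$ to a single sum over $\ell$ of probabilities involving $\{\hat X_\ell=0\}$, $\{\sum_iT_i\ge n\}$ and $\{(T_1,\dots,T_\ell)\notin G_{\ell,r}(\theta)\}$, convert each term to an expectation via a Markov-type inequality, and sum over $\ell\le\theta$. But you diverge from the paper in how you turn the event $\{\notin G_{\ell,r}(\theta)\}$ into a moment, and that divergence introduces a genuine gap.

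You bound the indicator by the crude inequalities $\mathbf 1_{\{\notin G_{\ell,r}(\theta)\}}\le(\theta^{-1}\sum_iT_i)^r$ and $\mathbf 1_{\{\sum_iT_i\ge n\}}\le n^{-1}\sum_iT_i$, which leads you to need $E_\omega^0\bigl(\mathbf 1_{\{\hat X_\ell=0\}}(\sum_{i\le\ell}T_i)^{r+1}\bigr)\le C(\omega)\ell^{r+1-d/2}$, and (after the midpoint split and reversibility, which are both fine) the unconditional estimate $E_\omega^0\bigl((\sum_{i\le m}T_i)^{r+1}\bigr)\le C(\omega)m^{r+1}$. To obtain the latter you invoke ``a higher-moment version of \eqref{eqn:Lemma38}, with a power of $|\GG_x|$ on the right,'' i.e.\ $E_\omega^xT_1^p\le c_p|\GG_x|^{q(p)}$, and then a Dominated Ergodic Theorem argument. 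This higher-moment bound is \emph{false}. The first-moment inequality $E_\omega^xT_1\le c_1|\GG_x|$ rests on a Kac/reversibility cancellation: the small probability of entering a deep trap exactly cancels the long expected sojourn once inside. For $p\ge2$ the cancellation is incomplete. Concretely, suppose $\GG_x\setminus\scrC_{\infty,\alpha}$ is a pair $\{z_1,z_2\}$ with $\omega_{z_1z_2}=1$ and every other edge incident to $z_1,z_2$ of conductance $\epsilon\ll\alpha$; then $|\GG_x|=O(1)$, but given entry (probability $\approx\epsilon$) the sojourn time is roughly geometric with mean $\approx1/\epsilon$, so $E_\omega^xT_1^2\approx1+\epsilon\cdot\epsilon^{-2}=1+\epsilon^{-1}$, which is not controlled by any function of $|\GG_x|$. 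Since Assumptions A1--A5 put no lower bound on $\epsilon$, no polynomial bound in $|\GG_x|$ exists. Worse, for i.i.d.\ conductance laws with sufficiently heavy lower tails --- exactly the regime where the theorem has content --- $E_\omega^0T_1^p$ need not even lie in $L^1(\Q_\alpha)$, and then the Birkhoff/DET argument does not deliver $E_\omega^0\bigl((\sum_{i\le m}T_i)^p\bigr)\le C(\omega)m^p$: the diagonal contribution $\sum_{i\le m}E_\omega^0(T_i^p)$ is a.s.\ super-linear by Fatou applied to the ergodic average.

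The paper avoids this entirely. In place of your indicator bound it uses the sharper combinatorial implication \eqref{E:4.15d}, which lower-bounds not $(\sum_iT_i)^{r+1}$ but the sum $\sum_{\ul{i}\in I(r+1,\ell)}\prod_{k=1}^{r+1}T_{i_k}$ over \emph{distinct} $(r+1)$-tuples. In every such product each $T_i$ appears to the first power, so the Markov-property decomposition in Lemmas~\ref{lemma-3.6}--\ref{lemma-3.7} ever only needs the first-moment bound $E_\omega^xT_1\le c_1|\GG_x|$. This is the idea missing from your proposal. (By contrast, your midpoint-conditioning step is simpler than the paper's pigeonhole-gap argument in Lemma~\ref{lemma-3.7} --- but only because you aggregated the $T_i$'s prematurely; with distinct indices $i_1<\dots<i_{r+1}$ scattered through $\{1,\dots,\ell\}$, a fixed midpoint need not separate them, which is why the paper has to locate a long gap first.)
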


The proof will proceed along similar lines as that of Proposition~\ref{prop-3.4} except that now our goal is to obtain a bound on the $\ell$-th term in the sum which is \emph{not} summable on~$\ell$. Indeed, only then the sum will be dominated by the terms $\ell\approx\theta$. (This is actually the reason why we need to take~$r$ dependent on dimension.) A novel point compared to the previous proof is the condition $(T_1,\dots,T_{\ell_m})\not\in G_{\,\ell_m,r}(\theta)$. We will again convert the probability into expectation as follows: Let $r\ge1$ and $\ell\ge r$ and consider the set of distinct $r$-tuples
\begin{equation}
\label{E:3.22}
I(r, \ell) := \bigl\{\ul{i}=(i_1,\, \dots,\, i_r) \,\colon 1 \leq i_1 < i_2 < \dots < i_r \leq \ell\bigr\}.
\end{equation}
Then
\begin{equation}
\label{E:4.15d}
(T_1,\dots,T_\ell)\not\in G_{\,\ell,r}(\theta)\,\,\,\&\,\,\,\sum_{i=1}^\ell T_i\ge n
\quad\Rightarrow\quad
\sum_{\ul{i} \in I(r+1,\ell)} \prod_{k=1}^{r+1} T_{i_k}\ge\frac{n \theta^r}{(r+1)!}.
\end{equation}
To see why this holds, recall \eqref{E:2.19d} to see that if $(T_1,\dots,T_\ell)\not\in G_{\,\ell,r}(\theta)$, then the sum of the $T_i$ with~$i$ skipping out any $r$-tuple of indices yields at least~$\theta$. Writing the sum over $\ul{i} \in I(r+1,\ell)$ as $1/(r+1)!$ times the sum over $r+1$ distinct indices and summing one index after the other, we thus get at least a factor~$\theta$ for each of the first~$r$ sums. The last sum is then unconstrained and it yields at least~$n$, in light of the second condition on the left of \eqref{E:4.15d}.

Ignoring for a moment the condition $X_{2m}=0$, we are thus naturally led to a multiparameter version of \eqref{E:3.12}:

\begin{lemma}
\label{lemma-3.6}
For any $r\ge1$,
\begin{equation}
\label{E:3.23}
\sup_{\ell\geq r}\, \frac{1}{\ell^r}\, E_\omega^0 \biggl(\,\sum_{\ul{i} \in I(r,\ell)} \prod_{k=1}^r T_{i_k} \biggr)<\infty\qquad\BbbP_\alpha\text{\rm-a.s.}
\end{equation}
\end{lemma}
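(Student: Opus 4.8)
\textbf{Proof proposal for Lemma~\ref{lemma-3.6}.}
The plan is to reduce the multiparameter sum to a product of single-index sums via the point-of-view-of-the-particle representation, exactly as in the derivation of \eqref{E:3.12}, and then close the argument with a Dominated Ergodic Theorem applied to the $r$-fold product. First I would expand the expectation by conditioning on the coarse-grained path. Writing $\ul i=(i_1<\dots<i_r)$ and using the tower property together with the Markov property of the chain $(\hat X, T)$, each ordered block $T_{i_1},\dots,T_{i_r}$ can be integrated out one index at a time from the last one backwards: conditioning on $\hat X_{i_r-1}$ gives $E_\omega^0(T_{i_r}\mid\cdots)=E_{\tau_{\hat X_{i_r-1}}\omega}^0 T_1$, and iterating yields
\begin{equation}
E_\omega^0\biggl(\sum_{\ul i\in I(r,\ell)}\prod_{k=1}^r T_{i_k}\biggr)
= E_\omega^0\biggl(\sum_{\ul i\in I(r,\ell)}\prod_{k=1}^r E_{\tau_{\hat X_{i_k-1}}\omega}^0 T_1\biggr).
\end{equation}
Now invoke \eqref{eqn:Lemma38}: $E_{\tau_{\hat X_j}\omega}^0 T_1\le c_1|\GG_{\hat X_j}|$, so the right-hand side is bounded by $c_1^r\,E_\omega^0\bigl(\sum_{\ul i\in I(r,\ell)}\prod_{k=1}^r|\GG_{\hat X_{i_k-1}}|\bigr)$, and enlarging the sum over ordered $r$-tuples to the full $r$-fold sum over $0\le j_1,\dots,j_r\le\ell-1$ we get the clean bound $c_1^r\,E_\omega^0\bigl[(\sum_{j=0}^{\ell-1}|\GG_{\hat X_j}|)^r\bigr]= c_1^r\,\ell^r\,E_\omega^0(W_\ell^r)$ in the notation of Lemma~\ref{lemma-Z's}.

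It remains to show $\sup_{\ell\ge1}E_\omega^0(W_\ell^r)<\infty$ for $\BbbP_\alpha$-a.e.\ $\omega$; since $W_\ell\le W^\star:=\sup_{\ell\ge1}W_\ell$, it suffices to prove $W^\star\in L^r(P_\omega^0)$ for $\BbbP_\alpha$-a.e.\ $\omega$. But this is already contained in the proof of Lemma~\ref{lemma-Z's}: by Assumption~\ref{A3} we have $|\GG_0|\in L^p(\mu)$ for every $p\ge1$, where $\mu=\Q_\alpha\otimes P_\omega^0$, so Wiener's Dominated Ergodic Theorem (Petersen~\cite[Theorem~1.16]{Petersen}) gives $W^\star\in L^p(\mu)$ for all $p\ge1$, and Fubini's theorem then yields $W^\star\in L^r(P_\omega^0)$ for $\BbbP_\alpha$-a.e.\ $\omega$ (indeed for every $r$). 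Combining this with the previous paragraph, $\ell^{-r}E_\omega^0\bigl(\sum_{\ul i\in I(r,\ell)}\prod_k T_{i_k}\bigr)\le c_1^r\,E_\omega^0\bigl[(W^\star)^r\bigr]<\infty$ uniformly in $\ell\ge r$, which is \eqref{E:3.23}.

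The one place that needs care — and the main obstacle, such as it is — is the interchange of conditional expectations in the first display: one must justify that integrating $T_{i_r}$ against the environment seen from $\hat X_{i_r-1}$ and then proceeding inductively produces exactly the product $\prod_k E_{\tau_{\hat X_{i_k-1}}\omega}^0 T_1$, which relies on the Markov property of the joint chain $(\hat X_n,T_n)$ and the finiteness of each $E_\omega^x T_1$ (guaranteed by \eqref{eqn:Lemma38} and Assumption~\ref{A3}). Beyond this, the argument is a direct quantitative reprise of the single-index estimate \eqref{E:3.12}, the only new ingredient being that one needs the $L^p$ control of $W^\star$ for the specific exponent $p=r$ rather than $p=1$ — which the Dominated Ergodic Theorem supplies for free since $|\GG_0|$ has all moments under $\BbbP_\alpha$.
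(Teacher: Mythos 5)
Your proposed reduction to a product of conditional expectations is where the argument breaks, and the gap is exactly the one you flagged at the end. The identity
\begin{equation}
E_\omega^0\biggl(\sum_{\ul i\in I(r,\ell)}\prod_{k=1}^r T_{i_k}\biggr)
= E_\omega^0\biggl(\sum_{\ul i\in I(r,\ell)}\prod_{k=1}^r E_{\tau_{\hat X_{i_k-1}}\omega}^0 T_1\biggr)
\end{equation}
does not follow from the Markov property. Conditioning on $\FF_{i_r-1}$ correctly replaces $T_{i_r}$ by $E_{\tau_{\hat X_{i_r-1}}\omega}^0 T_1$, but this quantity is a function of $\hat X_{i_r-1}$ and hence is \emph{not} measurable with respect to $\FF_{i_{r-1}-1}$. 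When you next try to condition on $\FF_{i_{r-1}-1}$ to pull out $T_{i_{r-1}}$, you are forced to take $E_{\tau_{\hat X_{i_{r-1}-1}}\omega}^0\bigl(T_1\cdot g(\hat X_1)\bigr)$ for a suitable $g$ (a multi-step transition applied to $x\mapsto E_{\tau_x\omega}^0 T_1$), and $T_1$ and $\hat X_1$ are \emph{not} conditionally independent given $\hat X_0$ — a long excursion $T_1$ goes hand in hand with landing on the boundary of a large weak component, which in turn makes the next excursion likely to be long. This positive correlation is precisely why the factorized expression cannot equal (nor obviously dominate) the left-hand side; already for $r=2$, $i_1=1$, $i_2=2$ your identity reduces to $E_\omega^0\bigl(T_1\,E_{\tau_{\hat X_1}\omega}^0 T_1\bigr)=E_\omega^0 T_1\cdot E_\omega^0\bigl(E_{\tau_{\hat X_1}\omega}^0 T_1\bigr)$, which has no reason to hold.

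The paper sidesteps this by \emph{not} attempting a factorization: it proves the stronger statement $M_0^r\in L^p(\Q_\alpha)$ for all $p\ge1$ by induction on $r$. In the inductive step it conditions only on the first index $i_1=j$, bounds the remaining inner (shifted) $r$-fold sum by $(\ell-j)^r M^r_{\hat X_j}$, and is thus reduced to controlling $E_\omega^0\bigl(T_1 M^r_{\hat X_1}\bigr)$ in $L^p(\Q_\alpha)$; this is exactly where the correlation between $T_1$ and $\hat X_1$ is confronted head-on, via the pointwise bound $E_\omega^0(T_1 M^r_{\hat X_1})\le c_1|\GG_0|\sum_{x\in\partial\GG_0}M^r_x$ and Lemma~4.5 of Berger--Biskup to sum a random, spatially correlated number of $L^p$ random variables. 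Your second and third paragraphs (the reduction to $W^\star$ and Wiener's Dominated Ergodic Theorem) are the correct ingredients and match the base case $r=1$, but on their own they do not handle the coupling across excursions that arises once $r\ge2$.
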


\begin{proof}
Let us use $M^r_0 = M^r_0(\omega)$ to denote the supremum in the statement of the lemma, the $0$ subscript 
indicating the starting point of the random walk, later to be replaced by any initial position 
$x \in \scrC_{\infty,\alpha}$. We will prove a stronger statement, namely that $M^r_0$ is in 
$L^p(\Q_\alpha)=L^p(\BbbP_\alpha)$ for any $p \geq 1$ and any $\alpha\in(0,\alpha_0)$. This will be done by induction on $r$.

For $r=1$, we argue as in the proof of Lemma~\ref{lemma-Z's}. Indeed with
the notation there, 
\begin{equation}
M^1_0 := \sup_{\ell\ge1} E_\omega^0 Z_l = \sup_{\ell\ge1} E_\omega^0 Y_l \leq c_1 \sup_{\ell\ge1} E_\omega^0 W_l
  \leq c_1 E_\omega^0 W^\star.
\end{equation}
Now, since $|\GG_0|$ has all moments under $\Q_\alpha$, Wiener's Dominated Ergodic Theorem for
$|\GG_0|^p$ and Jensen's inequality imply that $W^\star$ is in $L^p(\mu)$ for any
$p \geq 1$. Invoking Jensen one more time we conclude that $E_\omega^0 W^\star$ is in $L^p(\Q_\alpha)$ for all $p\ge1$.

For the induction step $r\to r+1$, using the strong Markov property we may write
\begin{equation}
\begin{aligned}
\frac1{\ell^{r+1}}\, E_\omega^0 \biggl(\,\sum_{\ul{i} \in I(r+1,\ell)} \prod_{k=1}^r T_{i_k} \biggr) 
     & \leq \ell^{-1} E_\omega^0 \biggl(\,\sum_{j=1}^{\ell-r} T_j 
	\, \ell^{-r} E_{\tau_{\hat X_{j}} \omega}^0 \Bigl(\,\sum_{\ul{i} \in I(r,\ell-j)} \prod_{k=1}^r T_{i_k}\Bigr)\biggr)
	\\
	&
	\leq \ell^{-1} E_\omega^0 \Bigl(\,\sum_{j=1}^{\ell} T_j M_{\hat X_{j}}^r \Bigr) 
	= E_\omega^0 \Bigl(\ell^{-1} \sum_{j=0}^{\ell-1} E_{\tau_{\hat X_j}\omega}^0(T_1 M_{\hat X_1}^r) \Bigr). 
\end{aligned}
\end{equation}
Therefore, by the Dominated Ergodic Theorem and Jensen as in the base of the induction, $M^{r+1}_0$ 
will be in $L^p(\Q_\alpha)$ for any $p \geq 1$ once the same is true for $E_\omega^0(T_1 M_{\hat X_1}^r)$.

To show this, we write
\begin{equation}
\label{eqn:MaxBoundedBySum}
  E_\omega^0 (T_1 M_{\hat X_1}^r) \leq E_\omega^0(T_1) \sum_{x \in \partial \GG_0} M_x^r
    \leq c_1 |\GG_0| \sum_{|x|_\infty \leq \text{diam}(\GG_0)} M_x^r \1_{x \in \scrC_{\infty, \alpha}}.
\end{equation}
Now $|\GG_0|$, $\text{diam}(\GG_0)$ and $M_0^r$ are in $L^p(\Q_\alpha)$ for any $p \geq 1$; the first
two due to Assumption~\ref{A3}, the latter by the induction hypothesis. Denoting $N:=\sum_{|x|_\infty \leq \text{diam}(\GG_0)}\1_{x \in \scrC_{\infty, \alpha}}$, which is in $L^p(\Q_\alpha)$ for all~$p\ge1$, the right-hand side of \eqref{eqn:MaxBoundedBySum} involves a sum over $N=N(\omega)$ random variables $M_x^r \1_{x \in \scrC_{\infty, \alpha}}$ (for $x\in\scrC_{\infty,\alpha}$ with $|x|_\infty \leq \text{diam}(\GG_0)$). These are in $L^p(\Q_\alpha)$, with a uniform bound on the norm, for all~$p\ge1$. 
By Lemma 4.5 from Berger and Biskup~\cite{Berger-Biskup}, the last sum in 
\eqref{eqn:MaxBoundedBySum} is thus also in $L^p(\Q_\alpha)$ for any $p \geq 1$. 
The Cauchy-Schwarz inequality then shows the same for $E_\omega^0(T_1 M_{\hat X_1}^r)$.
\end{proof}

Lemma~\ref{lemma-3.6}, Assumption~\ref{A5} and the pigeon-hole principle then imply:
\begin{lemma}
\label{lemma-3.7}
Fix $r \geq 1$. There exists $C=C(\omega)$ such that for any $\ell>r$ and 
$\BbbP_\alpha$-a.s. every $\omega$, 
\begin{equation}
\label{E:3.27}
E_\omega^0\biggl(\,\sum_{\ul{i} \in I(r,\ell)} \prod_{k=1}^{r} T_{i_k}\,;\,\hat X_\ell=0\biggr)
\le C(\omega) \ell^{r-d/2}.
\end{equation}
\end{lemma}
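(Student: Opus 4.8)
The plan is to combine Lemma~\ref{lemma-3.6}, which controls the full (unconditioned) expectation of $\sum_{\ul i\in I(r,\ell)}\prod_k T_{i_k}$, with the diffusive upper bound on the coarse-grained walk from Assumption~\ref{A5}, inserting the indicator $\1_{\hat X_\ell=0}$ at a midpoint of the trajectory. First I would split the index set: write $I(r,\ell)$ according to how many of the chosen indices $i_1<\dots<i_r$ fall into the first half $\{1,\dots,\lceil\ell/2\rceil\}$ versus the second half. So fix $0\le s\le r$ and consider the sub-sum over $r$-tuples with exactly $s$ indices in the first block and $r-s$ in the second. For such a tuple, $\prod_{k=1}^r T_{i_k}$ factors as a product over the first block times a product over the second block, and I condition on $\hat X_{\lceil\ell/2\rceil}=x$.

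Concretely, I would write
\begin{equation*}
E_\omega^0\Bigl(\,\sum_{\ul i\in I(r,\ell)}\prod_{k=1}^r T_{i_k}\,;\,\hat X_\ell=0\Bigr)
=\sum_{x\in\scrC_{\infty,\alpha}}
E_\omega^0\Bigl(\cdots\,;\,\hat X_{\lceil\ell/2\rceil}=x\Bigr)\,\hat{\cmss P}_\omega^{\ell-\lceil\ell/2\rceil}(x,0),
\end{equation*}
where the product over the first-block indices lives inside the expectation at $0$ with the event $\{\hat X_{\lceil\ell/2\rceil}=x\}$, and the product over the second-block indices gets pushed out by the strong Markov property into an expectation $E_{\tau_x\omega}^0(\cdots)$ over the remaining $\lceil\ell/2\rceil$ steps. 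By Lemma~\ref{lemma-3.6} applied at the shifted environment, that second-block expectation is bounded by $M_x^{\,r-s}\,(\lceil\ell/2\rceil)^{\,r-s}\le C(\omega)\ell^{\,r-s}$ uniformly in $x$ (here I use that $M_x^{r-s}$, suitably controlled, is locally bounded $\BbbP_\alpha$-a.s.; for $s=r$ the product is empty and the bound is trivially $1$). Simultaneously, reversibility plus Assumption~\ref{A5} give $\hat{\cmss P}_\omega^{\ell-\lceil\ell/2\rceil}(x,0)\le (2d/\alpha)\,\ell^{-d/2}$ uniformly in $x$. Summing the remaining first-block expectation over $x$ collapses the condition $\{\hat X_{\lceil\ell/2\rceil}=x\}$ and leaves $E_\omega^0(\sum_{\ul i\in I(s,\lceil\ell/2\rceil)}\prod_{k}T_{i_k})\le C(\omega)\ell^{\,s}$, again by Lemma~\ref{lemma-3.6}. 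Multiplying the three bounds gives $C(\omega)\ell^{\,s}\cdot\ell^{-d/2}\cdot\ell^{\,r-s}=C(\omega)\ell^{\,r-d/2}$ for each $s$, and summing over the finitely many values $s\in\{0,\dots,r\}$ yields \eqref{E:3.27}.

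The main obstacle I anticipate is the uniform-in-$x$ control of $M_x^{\,r-s}$ that is needed to pull the second-block expectation out past the supremum over $x$: Lemma~\ref{lemma-3.6} only asserts $M_0^{\,r-s}\in L^p(\BbbP_\alpha)$, not an a.s.\ bound over all $x$ visited by the walk. The resolution is the same device used inside the proof of Lemma~\ref{lemma-3.6} itself — one restricts to $x$ with $\textd_\omega(0,x)\le\rho\ell$ (the visited vertices satisfy this with $\rho=1$), over which range Assumption~\ref{A5} is uniform, and one absorbs the walk-dependent fluctuations of $M_x^{r-s}$ into the random constant via a sum-over-many-copies estimate (Lemma~4.5 of~\cite{Berger-Biskup}) exactly as in the last paragraph of the proof of Lemma~\ref{lemma-3.6}. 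Apart from this bookkeeping, everything is a routine iteration of the strong Markov property together with the two a priori inputs already in hand.
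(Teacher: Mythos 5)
Your proposed midpoint split does not yield the factorization you write down. After conditioning on $\hat X_{\lceil\ell/2\rceil}=x$, the Markov property gives
\begin{equation}
E_\omega^0\Bigl(\,\prod_{k=1}^{r}T_{i_k}\,;\,\hat X_\ell=0\Bigr)
=\sum_{x}E_\omega^0\Bigl(\prod_{i_k\le\lceil\ell/2\rceil}T_{i_k}\,;\,\hat X_{\lceil\ell/2\rceil}=x\Bigr)\,
E_{\tau_x\omega}^0\Bigl(\prod_{i_k>\lceil\ell/2\rceil}T_{i_k-\lceil\ell/2\rceil}\,;\,\hat X_{\ell-\lceil\ell/2\rceil}=0\Bigr),
\end{equation}
and the second factor is a joint expectation in which the weights $T_j$ and the endpoint constraint $\{\hat X_{\ell-\lceil\ell/2\rceil}=0\}$ are entangled: the walk's path determines the $T_j$'s, so you cannot pull $\hat{\cmss P}_\omega^{\ell-\lceil\ell/2\rceil}(x,0)$ out in front of an unconditional expectation of $\prod T_j$. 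Your display, which has both the $\hat{\cmss P}_\omega^{\ell-\lceil\ell/2\rceil}(x,0)$ prefactor \emph{and} a free expectation $E_{\tau_x\omega}^0(\cdots)$ of the second-block product, is therefore not an identity, nor even an upper bound in general. This is the crucial step and it fails.

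The paper avoids this by never cutting at a fixed time. Instead it uses a pigeonhole argument: with $i_0:=0$ and $i_{r+1}:=\ell+1$, one of the $r+1$ gaps $i_{s+1}-i_s$ must exceed $\ell/(r+1)$. Conditioning on $\hat X_{i_s}=x$ and $\hat X_{i_{s+1}-1}=y$ isolates a stretch of $i_{s+1}-i_s-1$ coarse-grained steps in which \emph{no} $T_j$ appears in the product, so the walk evolves freely there and the transition kernel $\hat{\cmss P}_\omega^{i_{s+1}-i_s-1}(x,y)$ genuinely factors out; Assumption~\ref{A5} then supplies the $\ell^{-d/2}$. Reversibility is used to turn the tail segment into a walk restarted from $0$, so after summing over $x,y$ both remaining expectations are unconditional expectations under $P_\omega^0$, and Lemma~\ref{lemma-3.6} applies directly with $M_0^s$ and $M_0^{r-s}$. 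Note that this also sidesteps your secondary worry about a uniform-in-$x$ control of $M_x^{\,r-s}$: in the paper's argument no such bound is ever needed, because every $M$-type constant that appears is evaluated at the origin. If you wish to salvage a midpoint-style argument, you would have to locate a $T$-free window of length $\gtrsim\ell$ somewhere on the trajectory before invoking \ref{A5}; but that is precisely what the pigeonhole over gaps accomplishes, and it cannot be replaced by a fixed cut at $\lceil\ell/2\rceil$.
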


\begin{proofsect}{Proof}
Let $\ell$ and~$r$ such that $\ell>r\ge1$ and pick $\ul{i}=(i_1,\dots,i_r)\in I(r,\ell)$. Then there is $s\in\{0,\dots,r\}$ such that $i_{s+1}-i_s\ge\frac \ell{r+1}$, where we set $i_0:=0$ and 
$i_{r+1}:=\ell+1$. Hence, summing over possible~$s$ and conditioning on $\hat X_{i_s}=x$ and $\hat X_{i_{s+1}-1}=y$, we find that the l.h.s. of \eqref{E:3.27}
is bounded above by 
\begin{equation}
\label{E3.27a}
\sum_{s=0}^r \,
\sum_{\begin{subarray}{c}
\ul{i} \in I(r,\ell)\\ i_{s+1}-i_s\ge\frac \ell{r+1}
\end{subarray}}
\sum_{x,y\in\scrC_{\infty,\alpha}}
E_\omega^0 \left(\prod_{k=1}^{r} T_{i_k} \; ;\;\;\;  \hat X_{i_s}=x,\; \hat X_{i_{s+1}-1}=y ,\; \hat X_\ell=0 \right).
\end{equation}
By the Markov property and reversibility each term in the sum is bounded above by
\begin{equation}
  C(\omega) E_\omega^0 \left( \prod_{k=1}^{s} T_{i_k} \; ; \;\;\; \hat X_{i_s}=x \right)
    \,\hat{\cmss P}_\omega^{i_{s+1}-i_s-1}(x,y)\,
  E_\omega^0 \left( \prod_{k=s+1}^{r} T_{\ell-i_k+1} \; ; \;\;\; \hat X_{\ell-i_{s+1}+1}=y\right)
\end{equation}
Since $\textd_\omega(0,x) \leq \ell/2 \leq (i_{s+1}-i_s-1) (r+1)/2$ (otherwise the corresponding term is zero), 
Assumption~\ref{A5} can be used to get $\hat{\cmss P}_\omega^{i_{s+1}-i_s-1}(x,y) \leq C(\omega)\ell^{-d/2}$.
Summing over all~$x,y \in \scrC_{\infty,\alpha}$, \eqref{E3.27a} is bounded above by
\begin{equation}
\begin{aligned}
  C(\omega)\ell^{-d/2}\sum_{s=0}^r \,\sum_{\ul{i} \in I(r,\ell)}
    E_\omega^0\biggl(\,\prod_{k=1}^{s} T_{i_k}\biggr)
    E_\omega^0\biggl(\,\prod_{k=s+1}^{r} T_{\ell-i_k+1}\biggr)
  & \le C(\omega)\ell^{r-d/2}\sum_{s=0}^r M_0^s M_0^{r-s} \\
  & \le C(\omega) (r+1) \ell^{r-d/2}
\end{aligned}
\end{equation}
where $M_0^r$ is defined in \eqref{E:3.23} and the last inequality follows from Lemma~\ref{lemma-3.6}.
\end{proofsect}

\begin{proofsect}{Proof of Proposition~\ref{prop-3.5}}
As in the proof of Proposition~\ref{prop-3.4}, bound the sum on the left-hand side of \eqref{E:3.21}
by
\begin{equation}
\label{eqn:3.33}
\sum_{\ell=1}^\theta P_\omega^0\Bigl(\,\hat X_\ell=0,\,(T_1,\dots,T_\ell)\not\in G_{\,\ell,r}(\theta),\,\sum_{i=1}^\ell T_i\ge n \Bigr).
\end{equation}
We will convert the probability into expectation by invoking \eqref{E:4.15d}.
Using the Markov inequality, \eqref{eqn:3.33} is thus bounded above by 
\begin{equation}
\frac{(r+1)!}{n\theta^r}
\sum_{\ell=1}^\theta E_\omega^0\biggl(\,\sum_{\ul{i} \in I(r+1,\ell)} \prod_{k=1}^{r+1} T_{i_k}\,;
  \;\hat X_\ell=0\biggr).
\end{equation}
By Lemma~\ref{lemma-3.7} the expectation is bounded by $C(\omega)\ell^{r+1-d/2}$. Since $r+1-d/2\ge-\ffrac12$, the sum is of order~$\theta^{r-d/2+2}$. Combining this with the prefactor, the claim follows.
\end{proofsect}

\begin{proofsect}{Proof of Theorem~\ref{thm:trapping}}
Combining Propositions~\ref{prop-3.4}, \ref{prop-3.5} and setting $\Delta:=\theta$, we get,
\begin{equation}
\label{eqn:4.17}
\frac1{\theta+1}\sum_{m=n-\theta}^n P_\omega^0\bigl(\EE_{\omega,\alpha}^{(r)}(\theta,m)^\cc,\,X_{2m}=0\bigr)
\le\frac{C(\omega)}{n^{d/2}}
\begin{cases}
\bigl(\tfrac n\theta\bigr)\log(\ffrac n\theta),\qquad&\text{if }d=4,
\\*[2mm]
\bigl(\tfrac n\theta\bigr)^{\ffrac d2-1},\qquad&\text{if }d\ge5.
\end{cases}
\end{equation}
Now let $n^\star$ be the index for which the corresponding term on the left-hand side is minimal. The claim then follows by noting that
\begin{equation}
P_\omega^0\bigl(\EE_{\omega,\alpha}^{(r)}(\theta,n^\star)^\cc\big|X_{2n^\star}=0\bigr)
\le P_\omega^0\bigl(\EE_{\omega,\alpha}^{(r)}(\theta,n^\star)^\cc,\,X_{2n^\star}=0\bigr)
\frac1{\hat{\cmss P}_\omega^{2n}(0,0)}
\end{equation}
as implied by $\hat{\cmss P}_\omega^{2n}(0,0)\le \hat{\cmss P}_\omega^{2n^\star}(0,0)$ due to $n^\star\le n$.
\end{proofsect}

\section{Refinements under strongly subdiffusive decay}
\label{sec4}\noindent
The goal of this section is to prove Theorem~\ref{thm:enhanced} which eliminates the need for choosing~$n^\star$ under the assumption of a strong subdiffusive decay.  Our general strategy is as follows: Since we already know that on $\{X_{2n^\star}=0\}$ the walk~$X$ spends time of order~$n$ in one of the connected components of $\Z^d\setminus\CC_{\infty,\alpha}$, it suffices to show that we can increase the time spent in this component by~$2(n-n^\star)$ at a negligible cost of probability. We will achieve this by conditioning on the entry and exit points~$x$, resp.,~$y$ of the walk to this component and show the following regularity estimate on the probability that the walk spends a given time in the component:

\begin{proposition}
\label{lem:T_pmf_bound}
There is $c_1=c_1(\alpha,d)$ such that for any $x,y \in \scrC_{\infty, \alpha}$ and any $n>1$ and $k \geq 1$,
\begin{enumerate}
 \item $\displaystyle P_{\omega}^x(\hatX_1=y,\, T_1=n) \leq \frac{c_1}{n^2}|\GG_x \cap \GG_y|$.
 \item $\displaystyle\bigl|P_{\omega}^x(\hatX_1=y,\, T_1=n) - P_{\omega}^x(\hatX_1=y,\, T_1={n+2k})\bigr| 
    \leq c_1 \frac{k }{n^3}|\GG_x \cap \GG_y|$.
\end{enumerate}
\end{proposition}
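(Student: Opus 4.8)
The plan is to realise the excursion into $\Z^d\setminus\scrC_{\infty,\alpha}$ as a single substochastic trajectory and extract the decay spectrally. Since the statement assumes $n>1$, fix $n\ge2$. On the event $\{T_1=n,\hatX_1=y\}$ the vertices $X_1,\dots,X_{n-1}$ all lie in one connected component $\FF$ of $\Z^d\setminus\scrC_{\infty,\alpha}$, and since $X_1\sim x$ and $X_{n-1}\sim y$ one has $\FF\subseteq\GG_x\cap\GG_y$. The components contributing are pairwise disjoint, so it suffices to bound the contribution of a fixed such $\FF$ by $C(\alpha,d)\,|\FF|/n^2$ in case (1) (and by $C(\alpha,d)\,k|\FF|/n^3$ in case (2)) and then sum over $\FF$, since $\sum_\FF|\FF|\le|\GG_x\cap\GG_y|$. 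Fix $\FF$ and let $Q$ denote the substochastic transition matrix of the walk killed on leaving $\FF$; $Q$ is self-adjoint on $\ell^2(\FF,\pi_\omega)$ with $\|Q\|<1$ (a finite component is left a.s.). Putting $\phi(z):=\cmss P_\omega(z,x)/\pi_\omega(x)$ and $b(w):=\cmss P_\omega(w,y)$, a short reversibility computation shows that the part of $P^x_\omega(\hatX_1=y,T_1=n)$ coming from excursions inside $\FF$ equals, for every split $m_1+m_2=n$ with $m_1,m_2\ge1$,
\[
\langle\,Q^{m_1-1}\phi,\ Q^{m_2-1}b\,\rangle_{\pi_\omega}.
\]

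The key estimate is
\[
\langle\phi,(I-Q^2)^{-2}\phi\rangle_{\pi_\omega}\le\frac{C(\alpha,d)\,|\FF|}{\pi_\omega(x)},
\qquad
\langle b,(I-Q^2)^{-2}b\rangle_{\pi_\omega}\le 2d\,|\FF|.
\]
To prove it, note that $\bigl((I-Q^2)^{-1}\phi\bigr)(v)=\pi_\omega(x)^{-1}\sum_{j\ge0}P^v_\omega(X_1,\dots,X_{2j}\in\FF,\,X_{2j+1}=x)=\pi_\omega(x)^{-1}P^v_\omega(X_{T_\FF}=x,\ T_\FF\ \text{odd})\in[0,\pi_\omega(x)^{-1}]$; hence $\langle\phi,(I-Q^2)^{-2}\phi\rangle_{\pi_\omega}=\|(I-Q^2)^{-1}\phi\|^2_{\pi_\omega}\le\pi_\omega(x)^{-2}\sum_{v\in\FF}\pi_\omega(v)\le\pi_\omega(x)^{-2}\cdot2d\,|\FF|$, and $\pi_\omega(x)\ge2\alpha$ (every $x\in\scrC_{\infty,\alpha}$ carries two strong edges) gives the first bound; for the second, $\bigl((I-Q^2)^{-1}b\bigr)(v)=P^v_\omega(X_{T_\FF}=y,\ T_\FF\ \text{odd})\in[0,1]$, so $\|(I-Q^2)^{-1}b\|^2_{\pi_\omega}\le\sum_{v\in\FF}\pi_\omega(v)\le2d\,|\FF|$.

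Part (1) then follows from the elementary functional-calculus inequality $R^j\preceq C(j+1)^{-2}(I-R)^{-2}$ for self-adjoint $0\preceq R\prec I$ (equivalently $\sup_{\mu\in[0,1)}\mu^j(1-\mu)^2\le C(j+1)^{-2}$), applied with $R=Q^2$: by Cauchy--Schwarz and $\|Q^j\phi\|^2_{\pi_\omega}=\langle\phi,R^j\phi\rangle_{\pi_\omega}\le C(j+1)^{-2}\langle\phi,(I-R)^{-2}\phi\rangle_{\pi_\omega}$ (and likewise for $b$), choosing $m_1,m_2$ as the two halves of $n$ yields the contribution $\le C(\alpha,d)|\FF|/n^2$. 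For part (2), telescope
\[
P^x_\omega(\hatX_1=y,T_1=n)-P^x_\omega(\hatX_1=y,T_1=n+2k)=\sum_{i=0}^{k-1}\Bigl(P^x_\omega(\hatX_1=y,T_1=n+2i)-P^x_\omega(\hatX_1=y,T_1=n+2i+2)\Bigr),
\]
so it is enough to bound each difference $P^x_\omega(\hatX_1=y,T_1=m)-P^x_\omega(\hatX_1=y,T_1=m+2)$, $m\ge n$, by $C(\alpha,d)|\FF|/m^3$ per component. Per component such a difference equals $\langle(I-Q^2)^{1/2}Q^{m_1-1}\phi,\ (I-Q^2)^{1/2}Q^{m_2-1}b\rangle_{\pi_\omega}$ for any $m_1+m_2=m$, and $\|(I-Q^2)^{1/2}Q^j\phi\|^2_{\pi_\omega}=\langle\phi,(I-R)R^j\phi\rangle_{\pi_\omega}\le C(j+1)^{-3}\langle\phi,(I-R)^{-2}\phi\rangle_{\pi_\omega}$ (from $\sup_\mu\mu^j(1-\mu)^3\le C(j+1)^{-3}$), and similarly for $b$; taking $m_1,m_2$ the two halves of $m$ and invoking the key estimate gives $\le C(\alpha,d)|\FF|/m^3$, whence the bound after summing the $k$ differences and then over $\FF$.

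The step I expect to be the main obstacle is the identification of $(I-Q^2)^{-1}\phi$ and $(I-Q^2)^{-1}b$ with harmonic-measure-type functions bounded pointwise by a constant: this is precisely what upgrades the naive bounds (from $R^j\preceq C(j+1)^{-1}(I-R)^{-1}$ together with $\|\phi\|^2_{\pi_\omega}\le\pi_\omega(x)^{-1}$, which only give order $n^{-1}$, resp.\ $n^{-2}$) to the needed $n^{-2}$, resp.\ $n^{-3}$. One should also note that $Q^2$, not $Q$, is the right object: because of the parity forced by the absence of self-loops, $\mathrm{spec}(Q)$ may come close to $-1$, whereas $Q^2$ is a genuine nonnegative contraction for which the above inequalities hold cleanly.
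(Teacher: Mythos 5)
Your proof is correct and follows the same broad spectral strategy as the paper's (substochastic excursion kernel $Q$, $\|Q\|<1$, functional calculus estimates such as $\sup_{\mu\in[0,1)}\mu^j(1-\mu)^\gamma\le C\,(j+1)^{-\gamma}$, Cauchy--Schwarz at the midpoint of the excursion). The difference lies in the key input bound. You estimate $\langle\phi,(I-Q^2)^{-2}\phi\rangle_{\pi_\omega}$ and $\langle b,(I-Q^2)^{-2}b\rangle_{\pi_\omega}$ directly by observing that $(I-Q^2)^{-1}\phi$ and $(I-Q^2)^{-1}b$ are harmonic-measure-type functions, pointwise bounded by $\pi_\omega(x)^{-1}$ and~$1$ respectively, which yields the $|\FF|$ factor from a trivial $\ell^\infty\to\ell^2$ bound. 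The paper instead sets up $\langle h_x,(1-\cmss Q^2)^{-1}\cmss Q^{2m}h_x\rangle$, rewrites it via a geometric series as $\pi_\omega(x)\,P_\omega^x(\hat X_1=x,\,T_1\ge2m,\,X_1\in\GG_{xy})$, applies Chebyshev, and then invokes the estimate $E_\omega^x(T_1;X_1\in\GG_{xy})\le c_1|\GG_{xy}|$ from \cite[Lemma~3.8]{BBHK}. Your route is therefore more self-contained (no appeal to the BBHK exit-time lemma) and arguably cleaner; the paper's route has the minor advantage of recycling an estimate already used elsewhere in the argument. The component-by-component decomposition you use, as opposed to working on $\GG_{xy}:=(\GG_x\cap\GG_y)\setminus\scrC_{\infty,\alpha}$ wholesale, is immaterial since $Q$ is block-diagonal over the components of $\GG_{xy}$. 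Both approaches produce the same $c_1(\alpha,d)$-type constants, and both correctly use that $n>1$ forces $X_1\notin\scrC_{\infty,\alpha}$.
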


We note that the restriction to $n>1$ ensures that the walk~$X$ actually steps out of~$\scrC_{\infty,\alpha}$.
In order to prove these bounds, we will need some preparations. If $\GG_x\cap\GG_y = \emptyset$ there is nothing to prove, hence we suppose otherwise and set 
$\GG_{xy}:= (\GG_x\cap\GG_y)\setminus\scrC_{\infty,\alpha}$. Define
\begin{equation}
\cmss Q(z,z'):=\begin{cases}
\cmss P_\omega(z,z'),\qquad&\text{if }z,z'\in\GG_{xy},
\\
0,\qquad&\text{otherwise}.
\end{cases}
\end{equation}
Then $\cmss Q$ is a substochastic kernel on $\GG_{xy}$ which is reversible with respect to $\pi_\omega$ (restricted to~$\GG_{xy}$) and self-adjoint on $\ell^2(\GG_{xy},\pi_\omega)$. Let $\langle\cdot,\cdot\rangle$ denote the inner product in this space and let $\Vert \cmss Q\Vert$ be the corresponding (operator) norm of~$\cmss Q$.

\begin{lemma}
\label{lemma-substochastic}
We have $\Vert \cmss Q\Vert<1$ and, in particular, $1-\cmss Q^2$ is positive and invertible.
\end{lemma}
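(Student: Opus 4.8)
The plan is to prove $\Vert\cmss Q\Vert<1$ by exhibiting a uniform "escape" mechanism: from every vertex of $\GG_{xy}$ there is a bounded-length path, using only edges of bounded-below conductance ratio, that leaves $\GG_{xy}$, so the sub-stochastic mass leaks out at a uniform rate.

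First I would record the structural facts. Since $\GG_{xy}\subseteq\GG_x\cap\GG_y\subseteq\Z^d\setminus\scrC_{\infty,\alpha}$ is contained in a union of two finite components of $\Z^d\setminus\scrC_{\infty,\alpha}$, it is a finite set, so $\cmss Q$ acts on a finite-dimensional space and $\Vert\cmss Q\Vert$ is attained. Moreover every $z\in\GG_{xy}$ has a neighbor $w\notin\GG_{xy}$ (indeed, being in a finite component, $z$ is connected within $\Z^d$ to $\scrC_{\infty,\alpha}$, hence $\GG_{xy}$ has a nonempty outer boundary and, being connected-by-components, every vertex can reach that boundary); let $\kappa(z)$ be the minimal length of such an escape path inside $\GG_x\cap\GG_y$ (to the complement of $\GG_{xy}$), and set $\kappa:=\max_{z\in\GG_{xy}}\kappa(z)\le|\GG_x\cap\GG_y|<\infty$.

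Next I would quantify the leakage. For a path $z=z_0,z_1,\dots,z_j=w$ of length $j\le\kappa$ with $z_0,\dots,z_{j-1}\in\GG_{xy}$ and $w\notin\GG_{xy}$, each single step has transition probability $\cmss P_\omega(z_{i},z_{i+1})=\omega_{z_iz_{i+1}}/\pi_\omega(z_i)\ge\alpha/(2d)$, using that all conductances lie in $(0,1]$ so $\pi_\omega(z_i)\le 2d$, and that... here I need a lower bound on $\omega_{z_iz_{i+1}}$; the cleanest route is to note that $\GG_x\cap\GG_y$ has $n^{o(1)}$-bounded size but that alone gives no conductance bound, so instead I would use that these are edges of $\Z^d$ with $\BbbP(\omega_b=0)=0$ — for a \emph{fixed} $\omega$ in the almost-sure set, every conductance along the finitely many relevant edges is strictly positive, hence bounded below by some $\omega$-dependent $\delta(\omega)>0$. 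This yields $\cmss P_\omega(z_i,z_{i+1})\ge\delta(\omega)/(2d)=:\beta>0$, so the probability of escaping $\GG_{xy}$ within $\kappa$ steps, starting from any $z$, is at least $\beta^\kappa=:\epsilon_0\in(0,1)$. Consequently $\sum_{z'}\cmss Q^\kappa(z,z')\le 1-\epsilon_0$ for every $z$, i.e. $\cmss Q^\kappa$ has row sums bounded by $1-\epsilon_0$. Since $\cmss Q$ is self-adjoint and reversible with respect to $\pi_\omega$, it is also \emph{column}-sub-stochastic in the $\pi_\omega$-weighted sense, so a standard interpolation (Riesz--Thorin between the $\ell^1(\pi_\omega)$ and $\ell^\infty$ bounds, or directly: $\Vert\cmss Q^\kappa\Vert_{\ell^2(\pi_\omega)}\le\Vert\cmss Q^\kappa\Vert_{\ell^1(\pi_\omega)}^{1/2}\Vert\cmss Q^\kappa\Vert_{\ell^\infty}^{1/2}\le 1-\epsilon_0$) gives $\Vert\cmss Q\Vert^\kappa=\Vert\cmss Q^\kappa\Vert\le 1-\epsilon_0<1$, hence $\Vert\cmss Q\Vert<1$. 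Finally, $\Vert\cmss Q^2\Vert=\Vert\cmss Q\Vert^2<1$ since $\cmss Q$ is self-adjoint, so $I-\cmss Q^2$ is positive-definite and therefore invertible, with $(I-\cmss Q^2)^{-1}=\sum_{j\ge0}\cmss Q^{2j}$ convergent in operator norm.

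The main obstacle is pinning down the uniform one-step lower bound $\beta$: there is no $\omega$-independent ellipticity here, so the argument must be run for a.e.\ fixed $\omega$ and let $\beta=\beta(\omega,x,y)$ depend on the (finitely many) conductances touching $\GG_x\cap\GG_y$. One must also be slightly careful that $\epsilon_0$ is allowed to depend on $x,y$ — which is fine for this lemma since it only asserts $\Vert\cmss Q\Vert<1$ for the given pair — but if a \emph{uniform} (in $x,y$) gap were needed later, one would instead invoke Assumption~\ref{A3} together with an isoperimetric/Dirichlet-form estimate on the finite graph $\GG_{xy}$ to bound the spectral gap of $I-\cmss Q$ in terms of $|\GG_{xy}|$ only. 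For the present statement, the escape-path argument above suffices.
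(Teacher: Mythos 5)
Your proof is correct and follows essentially the same strategy as the paper's: show that some power of $\cmss Q$ has all row sums strictly less than one (yours via escape paths of length $\le\kappa$, the paper's via $\cmss Q^{|\GG_{xy}|}$), convert that row-sum bound into an $\ell^2(\pi_\omega)$ operator-norm bound using reversibility (your $\ell^1$--$\ell^\infty$ interpolation and the paper's Cauchy--Schwarz computation are the same estimate), and then use self-adjointness to extract a root and conclude $\Vert\cmss Q\Vert<1$. The only cosmetic slip is the opening claim that every $z\in\GG_{xy}$ has a \emph{neighbor} outside $\GG_{xy}$ --- this can fail for interior vertices of a weak component --- but you immediately replace it with the bounded-length escape path $\kappa(z)$, which is what actually carries the argument.
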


\begin{proof}
Let $n:=|\GG_{xy}|$. It is easy to check that $\cmss Q^{n}$ --- regarded as an $n\times n$ matrix --- has all row sums strictly less than one. A simple computation then implies $\Vert\cmss Q^{n}\Vert<1$. Indeed, pick $h\in\ell^2(\GG_{xy},\pi_\omega)$ and compute:
\begin{equation}
\begin{aligned}
\langle h,\cmss Q^n h\rangle&=\sum_{z,z'\in\GG_{xy}}\pi_\omega(z)\cmss Q^n(z,z') h(z)
	h(z')
\\
&\le\biggl(\,\,\sum_{z,z'\GG_{xy}}\pi_\omega(z)\cmss Q^n(z,z') \bigl|h(z)\bigr|^2\biggr)^{\ffrac12}\biggl(\,\,\sum_{z,z'\GG_{xy}}\pi_\omega(z)\cmss Q^n(z,z') \bigl|h(z')\bigr|^2\biggr)^{\ffrac12}
\\
&=\sum_{z,z'\GG_{xy}}\pi_\omega(z)\cmss Q^n(z,z') \bigl|h(z)\bigr|^2
\le\Vert h\Vert^2\max\biggl\{\,\sum_{z'\in\GG_{xy}}\cmss Q^n(z,z')\colon z\in\GG_{xy}\biggr\}.
\end{aligned}
\end{equation}
Here we used Cauchy-Schwarz and applied the symmetry of $z,z'\mapsto\pi_\omega(z)\cmss Q^n(z,z')$. The maximum is $<1$ and so $\Vert\cmss Q^n\Vert<1$ as well.

As $\cmss Q$ is self-adjoint, we have $\Vert\cmss Q^{n}\Vert=\Vert\cmss Q\Vert^{n}$ and so $\Vert\cmss Q\Vert<1$ as well. The positivity and invertibility of $1-\cmss Q^2$ directly follows.
\end{proof}

Let $\delta_z\colon\GG_{xy}\to\R$ denote the element of $\ell^2(\GG_{xy},\pi_\omega)$ such that $\delta_z(z')=1$ for $z'=z$ and zero otherwise. For $u\in\{x,y\}$, define
\begin{equation}
h_u:=\sum_{z \in \GG_{xy}} \cmss P_{\omega}(z,u)\,\delta_z.
\end{equation}
The forthcoming derivations hinge on the following functional-analytic representation of the quantities in Proposition~\ref{lem:T_pmf_bound}:

\begin{lemma}
For~$x,y\in\scrC_{\infty,\alpha}$ with $\GG_{xy}\ne\emptyset$,
\begin{equation}
\label{E:4.5}
\pi_\omega(x)\,P_{\omega}^x\bigl(\hatX_1=y,\, T_1=n,\,X_1\in\GG_{xy}\bigr)=\bigl\langle h_x, \cmss Q^{n-2} h_y\bigr\rangle.
\end{equation}
\end{lemma}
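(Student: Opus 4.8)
The plan is to expand the left-hand side as a sum over paths of the walk $X$ that start at $x$, make their first step into $\GG_{xy}$, stay in $\GG_{xy}$ for the intermediate times $1,2,\dots,n-1$, and then at time $n$ step from $\GG_{xy}$ onto the vertex $y\in\scrC_{\infty,\alpha}$. Concretely, conditioning on $X_1=z$ and $X_{n-1}=z'$ with $z,z'\in\GG_{xy}$, the event $\{\hatX_1=y,\,T_1=n,\,X_1\in\GG_{xy}\}$ decomposes as: take the step $x\to z$ (probability $\cmss P_\omega(x,z)$), follow an $(n-2)$-step path inside $\GG_{xy}$ from $z$ to $z'$ (probability $\cmss Q^{n-2}(z,z')$, since the restricted kernel $\cmss Q$ is exactly $\cmss P_\omega$ killed on exiting $\GG_{xy}$), and then take the step $z'\to y$ (probability $\cmss P_\omega(z',y)$). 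Summing over $z,z'$ gives
\begin{equation}
P_{\omega}^x\bigl(\hatX_1=y,\,T_1=n,\,X_1\in\GG_{xy}\bigr)
=\sum_{z,z'\in\GG_{xy}}\cmss P_\omega(x,z)\,\cmss Q^{n-2}(z,z')\,\cmss P_\omega(z',y).
\end{equation}

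The next step is to recognize the right-hand side as an inner product in $\ell^2(\GG_{xy},\pi_\omega)$. Recall $h_u=\sum_{z\in\GG_{xy}}\cmss P_\omega(z,u)\delta_z$, so $h_u(z)=\cmss P_\omega(z,u)$. By definition of the inner product, $\langle h_x,\cmss Q^{n-2}h_y\rangle=\sum_{z}\pi_\omega(z)\,h_x(z)\,(\cmss Q^{n-2}h_y)(z)=\sum_{z,z'}\pi_\omega(z)\,h_x(z)\,\cmss Q^{n-2}(z,z')\,h_y(z')$. To match this with the path sum above I need to move the weight $\pi_\omega(z)$ around using reversibility: $\pi_\omega(z)\cmss P_\omega(z,x)=\omega_{zx}=\pi_\omega(x)\cmss P_\omega(x,z)$. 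Substituting $\pi_\omega(z)h_x(z)=\pi_\omega(z)\cmss P_\omega(z,x)=\pi_\omega(x)\cmss P_\omega(x,z)$ turns the inner product into $\pi_\omega(x)\sum_{z,z'}\cmss P_\omega(x,z)\,\cmss Q^{n-2}(z,z')\,\cmss P_\omega(z',y)$, which is exactly $\pi_\omega(x)$ times the path sum. This establishes \eqref{E:4.5}.

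The only genuine subtlety — and the step I would be most careful about — is the boundary bookkeeping: checking that the intermediate $(n-2)$ steps of the walk, all of which must remain in $\GG_{xy}$, are correctly captured by $\cmss Q^{n-2}$ rather than by the full kernel. This needs the observation that $\GG_{xy}=(\GG_x\cap\GG_y)\setminus\scrC_{\infty,\alpha}$ is a union of finite components of $\Z^d\setminus\scrC_{\infty,\alpha}$ that is entered from $x$ and exited to $y$ only through its boundary in $\scrC_{\infty,\alpha}$, so that on the event in question $X_1,\dots,X_{n-1}$ indeed all lie in $\GG_{xy}$ and each intermediate transition has probability $\cmss Q(X_i,X_{i+1})=\cmss P_\omega(X_i,X_{i+1})$; conversely the walk cannot leave and re-enter $\GG_{xy}$ without touching $\scrC_{\infty,\alpha}$, which would contradict $T_1=n$. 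One also needs $n>2$ for $\cmss Q^{n-2}$ to make literal sense as an iterated kernel; the cases $n=2$ (where $\cmss Q^0=\mathrm{Id}$ and the sum collapses to $z=z'$) and the trivially-excluded $n\le 1$ are checked by hand and are consistent. After these checks the identity follows by a direct reversibility computation as above.
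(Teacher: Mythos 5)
Your argument is correct and follows essentially the same route as the paper: condition on $X_1=z$ and $X_{n-1}=z'$, identify the intermediate $(n-2)$-step segment with $\cmss Q^{n-2}(z,z')$, and then use reversibility $\pi_\omega(x)\cmss P_\omega(x,z)=\pi_\omega(z)\cmss P_\omega(z,x)$ to rewrite the resulting sum as the inner product $\langle h_x,\cmss Q^{n-2}h_y\rangle$. The paper states the path factorization as an immediate observation, while you spell out the boundary bookkeeping (that the intermediate steps must stay inside a single component contained in $\GG_{xy}$ and that the small-$n$ cases check out); the substance is the same.
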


\begin{proof}
Note that we obviously have
\begin{equation}
P_\omega^x\bigl(T_1=n,\,X_1=z,\,X_{n-1}=z',\,X_n=y\bigr)
=\cmss P_\omega(x,z)\cmss Q^{n-2}(z,z')\cmss P_\omega(z',y)
\end{equation}
Now multiply both sides by $\pi_\omega(x)$ and use reversibility to write $\pi_\omega(x)\cmss P_\omega(x,z)=\cmss P_\omega(z,x)\pi_\omega(z)$. Since $\pi_\omega(z)\cmss Q^{n-2}(z,z')=\langle\delta_z,\cmss Q^{n-2}\delta_{z'}\rangle$, the result follows by summing over $z,z'\in\GG_{xy}$ and invoking the (bi)linearity of the inner product.
\end{proof}

Now we are ready to prove the desired claims (1) and (2) above: 

\begin{proof}[Proof of Proposition~\ref{lem:T_pmf_bound}]
Suppose without loss of generality that $n\gg1$ let $r\in\{0,1,2,3\}$ and $m\ge0$ be such that $n-2=4m+r$. Lemma~\ref{lemma-substochastic} tells us that $1-\cmss Q^2$ is positive and invertible. The Spectral Theorem yields
\begin{equation}
\label{E:4.7}
\bigl\Vert(1-\cmss Q^2)\cmss Q^{2m}\bigr\Vert\le \frac 1{m+1},\qquad m\ge0.
\end{equation}
By Cauchy-Schwarz and the fact that $2m+r+1\ge n/2$,
\begin{equation}
\begin{aligned}
\bigl\langle h_x, \cmss Q^{n-2} h_y\bigr\rangle&
=\Bigl\langle(1-\cmss Q^2)^{-\frac12}\cmss Q^{m}h_x,(1-\cmss Q^2)\cmss Q^{2m+r}\,(1-\cmss Q^2)^{-\frac12}\cmss Q^{m}h_y\Bigr\rangle
\\
&\le\frac 2n\,
\bigl\langle h_x,(1-\cmss Q^2)^{-1}\cmss Q^{2m}h_x\bigr\rangle^{\ffrac12}
\bigl\langle h_y,(1-\cmss Q^2)^{-1}\cmss Q^{2m}h_y\bigr\rangle^{\ffrac12}.
\end{aligned}
\end{equation}
Writing $(1-\cmss Q^2)^{-1}\cmss Q^{2m}$ as a geometric series and using 
\eqref{E:4.5} we get
\begin{equation}
\begin{aligned}
\bigl\langle h_x,(1-\cmss Q^2)^{-1}\cmss Q^{2m}h_x\bigr\rangle
&=\pi_\omega(x)P_\omega^x\bigl(\hat X_1=x,\,T_1\ge 2m,\,X_1\in\GG_{xy}\bigr)
\\
&\le \pi_\omega(x)P_\omega^x(T_1\ge 2m,\,X_1\in\GG_{xy})\le \frac{\pi_\omega(x)}{2m}E_\omega^x(T_1;\,X_1\in\GG_{xy}).
\end{aligned}
\end{equation}
The argument in Lemma~3.8 of~\cite{BBHK} shows that
\begin{equation}
E_\omega^x(T_1;\,X_1\in\GG_{xy})\le c_1|\GG_{xy}|
\end{equation}
for some $c_1=c_1(d,\alpha)$, by which we conclude
\begin{equation}
P_{\omega}^x(\hatX_1=y,\, T_1=n)\le \frac {c_1}{mn}\,\sqrt{\frac{\pi_\omega(y)}{\pi_\omega(x)}}\,|\GG_{xy}|.
\end{equation}
Using that $\alpha\le\pi_\omega(z)\le2d$ for both $z=x,y$, part~(1) follows by the fact that $m\ge\frac14(n-5)$.

For the second part, it suffices to prove this for $k:=1$ with a constant that is uniform in~$n$; the general claim follows by telescoping. Instead of \eqref{E:4.7} we will need
\begin{equation}
\bigl\Vert(1-\cmss Q^2)^2\cmss Q^{2m}\bigr\Vert\le \frac 4{(m+2)^2},\qquad m\ge0.
\end{equation}
Since
\begin{equation}
\pi_\omega(x)\Bigl(P_{\omega}^x(\hatX_1=y,\, T_1=n) - P_{\omega}^x(\hatX_1=y,\, T_1={n+2})\Bigr)
=\bigl\langle h_x,(1-\cmss Q^2)\cmss Q^{n-2}h_y\bigr\rangle,
\end{equation}
once we write
\begin{equation}
\bigl\langle h_x,(1-\cmss Q^2)\cmss Q^{n-2}h_y\bigr\rangle
=\Bigl\langle(1-\cmss Q^2)^{-\ffrac12}\cmss Q^{m}h_x,(1-\cmss Q^2)^2\cmss Q^{2m+r}\,(1-\cmss Q^2)^{-\ffrac12}\cmss Q^{m}h_y\Bigr\rangle
\end{equation}
the argument proceeds exactly as above.
\end{proof}

For the remainder of this section $\alpha$ can be any value in $(0, \alpha_0)$. As before, the 
dependence of the statements (and in particular the constants) on $\alpha$ will not be 
indicated explicitly. Our next goal is to show a regularity estimate on the probability of the desired event:

\begin{proposition}
\label{prop-4.4}
Let $r \ge 1$. There is a $\BbbP_\alpha$-a.s.\ finite $C=C(\omega)$ such that, for any $1 < \Delta, \theta < \ffrac n2$ and $m\in\{n-\Delta,\dots,n\}$,
\begin{equation}
\label{E:4.14}
P_\omega^0\bigl(\EE_{\omega,\alpha}^{(r)}(\theta,m),\,X_{2m}=0\bigr)
-P_\omega^0\bigl(\EE_{\omega,\alpha}^{(r)}(\theta,n),\,X_{2n}=0\bigr)
\le C(\omega)\frac{\Delta}{n^3}
\begin{cases}
\log\theta,\,\,&\text{if }d=4,
\\
1 ,\,\,&\text{if }d\ge5.
\end{cases}
\end{equation}
\end{proposition}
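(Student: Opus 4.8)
The plan is to reduce the estimate to a counting argument over coarse-grained trajectories and then to relate the events at times $2m$ and $2n$ by artificially lengthening the single longest excursion. Fix $m\in\{n-\Delta,\dots,n\}$; we may assume $m<n$, as otherwise the left side of \eqref{E:4.14} vanishes. Expanding $\{X_{2m}=0\}\cap\EE_{\omega,\alpha}^{(r)}(\theta,m)$ and using the Markov structure of the chain $(\hat X,T)$, one has $P_\omega^0(\EE_{\omega,\alpha}^{(r)}(\theta,m),X_{2m}=0)=\sum_{b\in B_m}P(b)$, where $B_m$ is the set of finite trajectories $b=(\ell;\hat X_0=0,\hat X_1,\dots,\hat X_\ell=0;T_1,\dots,T_\ell)$ with $r\le\ell\le\theta$, $\sum_kT_k=2m$ and $(T_1,\dots,T_\ell)\in G_{\ell,r}(\theta)$, and $P(b):=\prod_{k=1}^\ell p_k(T_k)$ with $p_k(t):=P_\omega^{\hat X_{k-1}}(\hat X_1=\hat X_k,\,T_1=t)$. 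For $b\in B_m$ let $i_\star=i_\star(b)$ be the smallest index with $T_{i_\star}=\max_{1\le k\le\ell}T_k$, and let $\Phi(b)$ be the trajectory obtained from $b$ by replacing $T_{i_\star}$ with $T_{i_\star}+2(n-m)$, all else unchanged. First I would check that $\Phi$ maps $B_m$ injectively into $B_n$: the coarse path is untouched, $\sum_kT_k$ becomes $2n$, and $(T')\in G_{\ell,r}(\theta)$ because any valid $r$-tuple for $b$ either already contains $i_\star$ or can have a member swapped for $i_\star$ without raising the skipped sum (as $T_{i_\star}$ is the largest $T$), and such a tuple, containing $i_\star$, stays valid for $\Phi(b)$; injectivity holds since in $\Phi(b)$ the excursion at $i_\star$ is the \emph{strict} maximum, so the selection rule applied to $\Phi(b)$ returns $i_\star$ and $b$ is recovered by subtracting $2(n-m)$. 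Hence
\[
P_\omega^0\bigl(\EE_{\omega,\alpha}^{(r)}(\theta,m),X_{2m}=0\bigr)-P_\omega^0\bigl(\EE_{\omega,\alpha}^{(r)}(\theta,n),X_{2n}=0\bigr)\le\sum_{b\in B_m}\bigl[P(b)-P(\Phi(b))\bigr],
\]
and the summand equals $\bigl[\prod_{k\ne i_\star}p_k(T_k)\bigr]\bigl[p_{i_\star}(T_{i_\star})-p_{i_\star}(T_{i_\star}+2(n-m))\bigr]$.

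Next I would note that $i_\star$ picks out a \emph{long} excursion: for $b\in B_m$ some $r$-tuple has complement with $T$-sum $\le\theta$, so that tuple carries time $\ge 2m-\theta\ge n-\theta\ge n/2$ (using $m>n/2$ and $\theta<n/2$), whence $T_{i_\star}\ge(2m-\theta)/r\ge n/(2r)$. By Proposition~\ref{lem:T_pmf_bound}(2) (applied with $T_{i_\star}$ in the role of $n$ and $n-m$, which is at most $\Delta$, in the role of $k$), $|p_{i_\star}(T_{i_\star})-p_{i_\star}(T_{i_\star}+2(n-m))|\le c_1(n-m)T_{i_\star}^{-3}|\GG_{\hat X_{i_\star-1}}\cap\GG_{\hat X_{i_\star}}|\le 8r^3c_1\,\Delta\,n^{-3}|\GG_{\hat X_{i_\star-1}}\cap\GG_{\hat X_{i_\star}}|$; since $\prod_{k\ne i_\star}p_k(T_k)\ge0$, the difference on the left of \eqref{E:4.14} is at most $8r^3c_1\,\Delta\,n^{-3}\,S$ with
\[
S:=\sum_{b\in B_m}\Bigl[\prod_{k\ne i_\star(b)}p_k(T_k)\Bigr]\,\bigl|\GG_{\hat X_{i_\star(b)-1}}\cap\GG_{\hat X_{i_\star(b)}}\bigr|,
\]
and it remains to prove $S\le C(\omega)$ for $d\ge5$ and $S\le C(\omega)\log\theta$ for $d=4$, with $C(\omega)$ finite $\BbbP_\alpha$-a.s.\ and uniform in $m,n,\Delta,\theta$.

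To bound $S$ I would group trajectories by $\ell=\ell_m$ and by the position $i_\star=j$, then drop every constraint except $\hat X_0=\hat X_\ell=0$ and $\sum_kT_k=2m$; keeping the latter is essential, since it pins $T_j=2m-\sum_{k\ne j}T_k$ and lets the sum over the remaining times be bounded by $\prod_{k\ne j}\hat{\cmss P}_\omega(\hat X_{k-1},\hat X_k)$ (without it the $j$-th time is unconstrained and the sum diverges). Summing out the coarse path then leaves, for the $(\ell,j)$-block, the quantity $\sum_{x,y\in\scrC_{\infty,\alpha}}\hat{\cmss P}_\omega^{j-1}(0,x)\,|\GG_x\cap\GG_y|\,\hat{\cmss P}_\omega^{\ell-j}(y,0)$. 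The longer of $j-1,\ell-j$ is $\ge\ell/4$ for $\ell\ge2$ (and $\ell=1$, which forces $r=1$, is trivial), and since the relevant chain always starts at the origin, Assumption~\ref{A5} (with $\rho=1$) and reversibility bound the corresponding propagator by $C(\omega)\ell^{-d/2}$ uniformly in its other endpoint; summing that endpoint out replaces $|\GG_x\cap\GG_y|$ by $\Psi(x):=\sum_{y\in\scrC_{\infty,\alpha}}|\GG_x\cap\GG_y|$. Collecting the two cases, the $\ell$-block is $\le C(\omega)\ell^{-d/2}\sum_{i=0}^{\ell-1}E_\omega^0[\Psi(\hat X_i)]\le C(\omega)\,\ell^{1-d/2}E_\omega^0[\Psi^\star]$ with $\Psi^\star:=\sup_{\ell\ge1}\tfrac1\ell\sum_{i=0}^{\ell-1}\Psi(\hat X_i)$. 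Since $\Psi(0)$ is dominated by a polynomial in $|\GG_0|$ and $\diam\GG_0$, Assumption~\ref{A3} puts it in every $L^p(\BbbP_\alpha)$, and then --- exactly as in the proofs of Lemmas~\ref{lemma-Z's} and \ref{lemma-3.6} --- Wiener's Dominated Ergodic Theorem and Fubini give $E_\omega^0[\Psi^\star]<\infty$ for $\BbbP_\alpha$-a.e.~$\omega$. Summing over $r\le\ell\le\theta$, $\sum_{\ell\le\theta}\ell^{1-d/2}$ is bounded for $d\ge5$ and $O(\log\theta)$ for $d=4$, which is the claim; note the bound does not depend on $r$, matching the statement.

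The main obstacle is getting the map $\Phi$ to do two seemingly incompatible things: land inside $B_n$ --- i.e.\ respect the ``there is an $r$-tuple with small complement'' clause in the definition of $G_{\ell,r}(\theta)$ --- while being injective. Extending the single \emph{largest} excursion reconciles these, and it also makes the ensuing estimate for $S$ depend only on the location of one excursion; this is crucial, because a union bound over $r$-tuples would insert a $\binom{\ell}{r}$ factor which, unlike in Proposition~\ref{prop-3.5}, has no $\theta^{-r}$ prefactor to absorb it and would swamp the $\ell^{-d/2}$ gain. The rest is a routine reuse of tools already in the excerpt: the path decomposition together with Assumption~\ref{A5}, and the maximal-ergodic-theorem control of the $|\GG|$-factors encountered along the coarse-grained walk.
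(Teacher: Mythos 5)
Your proof is correct and follows essentially the same route as the paper: the path transformation that lengthens the single longest excursion by $2(n-m)$, injectivity plus preservation of $G_{\ell,r}(\theta)$, the smoothness estimate from Proposition~\ref{lem:T_pmf_bound}(2) applied with the max-excursion length $\ge (2m-\theta)/r \gtrsim n$, then decoupling via the Markov property, Assumption~\ref{A5} on the longer half, and the Dominated Ergodic Theorem to control the $|\GG|$-factors. The only cosmetic difference is that you carry $\Psi(x)=\sum_y|\GG_x\cap\GG_y|$ through the ergodic-theorem step, whereas the paper uses the bound $\sum_y|\GG_x\cap\GG_y|\le 2d|\GG_x|^2$; you also spell out the injectivity and $G_{\ell,r}$-preservation arguments and the role of the pinned constraint $\sum T_k=2m$, which the paper leaves implicit.
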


\begin{proofsect}{Proof}
Throughout we will assume that $0\in\scrC_{\infty,\alpha}$ and that $X_0=0$. The proof is based on a path transformation argument. Let $\Delta_0:=n-m$. Given a path $\hat X_0, \hat X_1, \dots\hat X_\ell$ of the coarse-grained walk and the corresponding sequence of times $T_1,T_2,\dots,T_\ell$, we can record the pairs in the form
\begin{equation}
\label{eqn:t-path} 
\gamma:= \bigl((x_0,\; t_0),\, (x_1,\;t_1), \dots, (x_\ell,\;t_\ell)\bigr),
\end{equation}
where $x_j:=\hat X_j$ and $t_j:=T_j$. Then $\gamma$ is a ``path'' of the coarse-grained
random walk with $\ell=\ell(\gamma)$ steps. We shall identify events involving only $(\hatX_i,T_i)_{i\ge1}$ with sets of paths and write $\{\gamma\}$ for the event $\{((\hatX_1,\; T_1),\, \dots, (\hatX_\ell,\;T_\ell))= \gamma\}$.

For any $s \ge 1$, define the path transformation $\varphi_s$ as follows:
Given $\gamma$, let $k = k(\gamma)$ be the smallest index such that $t_k=\max_{1\le j\le\ell(\gamma)}t_j$ and set
\begin{equation}
\varphi_s(\gamma) :=\bigl((x_1,\; t_1),\, \dots, (x_k, t_k + 2s), \dots, (x_\ell,\;t_\ell)\bigr).
\end{equation}
It is easy to see that this is a one-to-one mapping. Furthermore, if $\gamma \in  
\EE_{\omega,\alpha}^{(r)}(\theta, m)\cap\{X_{2m}=0\}$ then $\varphi_{\Delta_0}(\gamma)
\in \EE_{\omega,\alpha}^{(r)}(\theta, n)\cap\{X_{2n}=0\}$. The last two statements imply
\begin{equation}
\label{E:4.23}
\text{l.h.s. of \eqref{E:4.14}}\le \sum_{\gamma} \bigl[P_\omega^0(\gamma)-P_\omega^0(\varphi_{\Delta_0}(\gamma))\bigr] \,,
\end{equation}
where the sum is over all $\gamma \in \EE_{\omega,\alpha}^{(r)}(\theta, m)\cap\{X_{2m}=0\}$. 

For such $\gamma$, the difference in the corresponding 
term is (with $\ell = \ell(\gamma)$, $k = k(\gamma)$)
\begin{equation}
\begin{aligned}
   \hat{\cmss P}_\omega^{k-1}(0,x_{k-1})
  \big[P_\omega^{x_{k-1}}(T_1=t_k,\,\hat X_1=x_k) - 
      &P_\omega^{x_{k-1}}(T_1=t_k + \Delta_0,\,\hat X_1=x_k)\big]
  \hat{\cmss P}_\omega^{\ell-k}(x_k,0) \\
  & \le c_1
  \hat{\cmss P}_\omega^{k-1}(0,x_{k-1})
  \frac{\Delta}{n^3}\bigl|\GG_{x_{k-1}}\cap\GG_{x_k}\bigr|
  \hat{\cmss P}_\omega^{\ell-k}(x_k,0)
\end{aligned}
\end{equation}
where we have used Proposition~\ref{lem:T_pmf_bound} for the middle term, 
the fact that $t_k \ge (2m-\theta)/r$ and the bounds on $\Delta$, $\theta$ and $m$.

Summing over $\ell=\ell(\gamma)$, $k=k(\gamma)$, $x:=x_{k-1}$ and $y:=x_k$ and using the reversibility
of $\hat{\cmss P}_\omega$, the sum in \eqref{E:4.23} is 
bounded above by
\begin{equation}
c_1\frac{\Delta}{n^3}
\sum_{\ell=1}^\theta\sum_{k=1}^\ell\sum_{x,y}\hat{\cmss P}_\omega^{k-1}(0,x)\hat{\cmss P}_\omega^{\ell-k}(0, y)|\GG_x\cap\GG_y|.
\end{equation}
Applying the bound in Assumption~\ref{A5} for $k-1$ or $\ell-k$, whichever is 
larger, and using the  bound $\sum_y |\GG_x \cap \GG_y| \le 2d |\GG_x|^2$, we get
\begin{equation}
\label{E:4.27}
\text{l.h.s. of \eqref{E:4.14}}\le  C(\omega)\frac{\Delta}{n^3}
\sum_{\ell=1}^\theta \frac1{\ell^{d/2}}E_\omega^0\biggl(\,\sum_{k=0}^{\ell-1} |\GG_{\hatX_k}|^2\biggr).
\end{equation}
The expectation is less than $C(\omega)\ell$ by our earlier arguments based on the Dominated Ergodic Theorem (namely, the proof of Lemma~\ref{lemma-Z's}), since $|\GG_0|$ has all moments
under~$\BbbP_\alpha$. The proof is finished by computing the sum.
\end{proofsect}

The regularity estimate from Proposition~\ref{prop-4.4} and the universal bounds in Propositions~\ref{prop-3.4}-\ref{prop-3.5} allow us to complete the proof of our last main theorem. The key issue (which was not there for Theorem~\ref{thm:trapping}) is that these bounds do not have the same structure and thus some optimization will be necessary.

\begin{proof}[Proof of Theorem~\ref{thm:enhanced}]
Combining Propositions~\ref{prop-3.4}, \ref{prop-3.5} and \ref{prop-4.4} we get
\begin{equation}
\label{E:4.29}
P_\omega^0\bigl(\EE_{\omega,\alpha}^{(r)}(\theta,n)^\cc,\,X_{2n}=0\bigr)
\le C(\omega)\begin{cases}
\displaystyle\frac\Delta{n^3}\log\theta + \frac{\log(\ffrac n\theta)}{n\Delta},\qquad&\text{if }d=4,
\\*[4mm]
\displaystyle\frac\Delta{n^3} + \frac1{n\Delta \theta^{\frac d2-2}},\quad&\text{if }d\ge5.
\end{cases}
\end{equation}
We may now minimize the above expression on $\Delta \in [1,\ffrac n2]$ and compare the result with $\cmss P_\omega^{2n}(0,0)$. The argument proceeds by considering separately three ranges of~$d$.

We first treat the case $5\le d\le 8$. Here we set
\begin{equation}
\label{eqn:OptimalDelta}
\Delta:=\bigl\lfloor \tfrac12 n\theta^{1-\ffrac d4} \bigr\rfloor \, \in [1,\ffrac n2] \,.
\end{equation}
Recalling the definition of~$\zeta_n$ from \eqref{eqn:Zeta}, we get 
\begin{equation}
\frac\Delta{n^3} + \frac1{n\Delta \theta^{\frac d2-2}} \le C n^{-2}\theta^{1-\ffrac d4}=\frac{C}{\zeta_n(\omega)}\Bigl(\frac n\theta\Bigr)^{\ffrac d4-1}\cmss P_\omega^{2n}(0,0)
\end{equation}
as desired.

If $d \geq 9$, we need to be more careful in the choice of $\Delta$, since it cannot be smaller than $1$.
Therefore we set,
\begin{equation}
\label{eqn:OptimalDelta1}
\Delta:=\bigl\lfloor \tfrac14 n\theta^{1-\ffrac d4} + 1\bigr\rfloor \, \in [1,\ffrac n2] \,.
\end{equation}
Then, 
\begin{equation}
\frac\Delta{n^3} + \frac1{n\Delta \theta^{\frac d2-2}} \le 
  \frac{C \Delta}{n^3} = 
  \frac{C(n\theta^{1-\ffrac d4} + 1)}{n^{3}}
  \le \frac{C(n\theta^{1-\ffrac d4} + 1)}{\zeta_n(\omega)}
\cmss P_\omega^{2n}(0,0) \, .
\end{equation}

Finally, it remains to establish the case $d=4$. Here we set
\begin{equation}
\label{E:4.32}
\Delta:=\lfloor \tfrac12 n\sqrt{\log (n/\theta)/\log n} + 1 \rfloor \, \in [1,\ffrac n2] \, .
\end{equation}
Then,
\begin{equation}
\label{E:4.33}
\frac\Delta{n^3}\log\theta + \frac{\log(\ffrac n\theta)}{n\Delta}
\le \frac{C \log(\ffrac n\theta)}{n\Delta}
\le \frac{C}{n^2}\sqrt{\log(\ffrac n\theta)\log n}
=\frac{C}{\zeta_n(\omega)}\sqrt{\log(\ffrac n\theta)}\,\,\cmss P_\omega^{2n}(0,0)
\end{equation}
and the claim is proved in this case as well.
\end{proof}

\section*{Acknowledgments}
\noindent
This research has  been partially supported by the NSF grant DMS-1106850, NSA grant H98230-11-1-0171 and the GA\v CR project P201-11-1558. The authors wish to express their thanks to anonymous referees for their careful reading and constructive criticism.

\end{document}